\documentclass[12pt]{amsart}

\setlength{\textheight}{22cm}
\setlength{\textwidth}{15cm}

\calclayout

\usepackage{amsmath,amsthm,amssymb}
\usepackage{mathrsfs}

\usepackage{colonequals}
\usepackage{enumitem}
\usepackage{graphicx}
\usepackage[justification=centering]{caption}

\usepackage[colorlinks=true,allcolors=blue,pdftitle={Bounded vorticity}]{hyperref}

\usepackage[initials,nobysame,alphabetic,msc-links,backrefs]{amsrefs}
\usepackage{color}

\newtheorem{theorem}{Theorem}
\newtheorem{theoremm}{Theorem}[section]

\newtheorem{corollary}{Corollary}[section]
\newtheorem{proposition}{Proposition}[section]
\newtheorem{lemma}[proposition]{Lemma}

\newtheorem{remark}{Remark}[section]
\newtheorem{condition}{Condition}[section]
\newtheorem{defi}{Definition}[section]

\numberwithin{equation}{section}

\DeclareMathOperator{\diver}{div}
\DeclareMathOperator{\curl}{curl}
\DeclareMathOperator{\var}{var}
\DeclareMathOperator{\R}{R}

\newcommand{\de}{\delta}

\newcommand{\ep}{\varepsilon}
\newcommand{\ve}{\varepsilon}
\newcommand{\lep}{{|\log\ep|}}
\newcommand{\ga}{\Gamma}
\newcommand{\lga}{|\ga_0|}
\newcommand{\al}{\alpha}

\newcommand{\N}{\mathbb N}
\newcommand{\NN}{\mathcal N}
\newcommand{\RR}{\mathbb R}
\newcommand{\RRR}{\mathfrak R}
\newcommand{\C}{\mathbb C}

\newcommand{\CC}{\mathcal C}

\newcommand{\F}{\mathcal F}
\renewcommand{\H}{\mathcal H}

\renewcommand{\S}{\mathcal S}
\newcommand{\T}{\mathcal T}

\newcommand{\GG}{\mathfrak G}

\def\indic{{\mathbf 1}}

\renewcommand{\CC}{\mathscr C}
\newcommand{\E}{\mathscr E}

\renewcommand{\t}{\mathbf t}

\newcommand{\D}{\mathrm D}

\newcommand{\ex}{\mathrm{ex}}
\newcommand{\loc}{\mathrm{loc}}
\newcommand{\good}{\mathrm{good}}
\newcommand{\bad}{\mathrm{bad}}

\renewcommand{\(}{\left(}
\renewcommand{\)}{\right)}

\renewcommand{\u}{\mathbf{u}}
\newcommand{\A}{\mathbf{A}}

\def\pr#1{\left\langle #1\right\rangle}

\def\hal{{\frac12}}

\def\hal{{\frac12}}
\def\nab{{\nabla}}
\def\hci{{H_{c_1}}}
\def\tga{{\widetilde\ga}}

\definecolor{green1}{rgb}{0.0, 0.5, 0.0}

\DeclareMathOperator{\curlcurl}{curl^2}

\title[Bounded vorticity for 3D Ginzburg--Landau]{Bounded vorticity for the 3D Ginzburg--Landau model and an isoflux problem}

\author{Carlos Rom\'{a}n}
\address{Facultad de Matem\'aticas e Instituto de Ingenier\'ia Matem\'atica y Computacional, Pontificia Universidad Cat\'olica de Chile, Vicu\~na Mackenna 4860, 7820436 Macul, Santiago, Chile}
\email{carlos.roman@mat.uc.cl}

\author{Etienne Sandier}
\address{Universite Paris-Est, LAMA - CNRS UMR 8050, 61, Avenue du General de Gaulle, 94010 Creteil, France}
\email{sandier@u-pec.fr}

\author{Sylvia Serfaty}
\address{Courant Institute of Mathematical Sciences, New York University, 251 Mercer St., New York, NY 10012}
\email{serfaty@cims.nyu.edu}

\date{October 28, 2022}
\begin{document}
\begin{abstract} We consider the full three-dimensional Ginzburg--Landau model of superconductivity with applied magnetic field, in the regime where the intensity of the applied field is close to the ``first critical field" $H_{c_1}$ at which vortex filaments appear, and in the asymptotics of a small inverse Ginzburg--Landau parameter $\ep$. This onset of vorticity is directly related to an ``isoflux problem" on curves (finding a curve that maximizes the ratio of a  magnetic flux by its length), whose study was initiated in \cite{Rom2} and which we continue here. By assuming a nondegeneracy condition for this isoflux problem, which we show holds at least for instance in the case of a ball, we prove that if the intensity of the applied field remains below $\hci+ C \log \lep$, the total vorticity remains bounded independently of $\ep$,  with vortex lines concentrating near the maximizer of the isoflux problem, thus extending to the three-dimensional setting a two-dimensional result of \cite{SanSer2}.    We finish by showing an improved estimate on the value of $\hci$ in some specific simple geometries. 
\end{abstract}
\maketitle 
\noindent 
{\bf Keywords:} Ginzburg--Landau, vortices, critical field, magnetic field, isoflux problem.\\
{\bf MSC:} 35Q56 (35J50 49K10 82D55).

\section{Introduction}

\subsection{Setup and problem}
We are interested in the full three-dimensional Ginzburg--Landau model of superconductivity, which after nondimensionalization of the constants, can be written as 
\begin{equation}
\label{modgl}
GL_\ep (u, A):= \frac12 \int_\Omega |\nab_A u|^2 + \frac1{2\ep^2}(1-|u|^2)^2+ \hal \int_{\RR^3} |H-H_{\ex}|^2.\end{equation}
Here $\Omega $ represents the material sample, we assume it to be a bounded simply connected subset of $\RR^3$ with regular boundary. 
The function $u : \Omega \to \C$ is the ``order parameter", representing the local state of the material in this macroscopic theory ($|u|^2\le 1$ indicates the local density of superconducting electrons), while the vector-field $A: \RR^3 \to \RR^3$ is the gauge of the magnetic field, and the magnetic field induced inside the sample and outside is $H:= \nab \times A$. The covariant derivative $\nab_A$ means $\nab - i A$. The vector field $H_{\ex}$ here represents an applied magnetic field and we will assume that $H_{\ex}= h_{\ex} H_{0,\ex}$ where $H_{0,\ex} $ is a fixed vector field and $h_{\ex}$ is a real parameter that can be tuned. 
Finally, the parameter $\ep>0$ is the inverse of the so-called Ginzburg--Landau parameter $\kappa$, a dimensionless ratio of all material constants, and that depends only on the type of material. In our mathematical analysis of the model, we will study the asymptotics of $\ep \to 0$ (also called ``London limit" in physics) which corresponds to extreme type-II superconductors. 

The Ginzburg--Landau model is known to be a $\mathbb U(1)$-gauge theory. This means that all
the meaningful physical quantities are invariant under the gauge transformations
$u \to  u e^{i\Phi}$, $ A \to A + \nab \Phi$ where 
 $\Phi$  is any regular enough real-valued function. The Ginzburg--Landau energy and its
associated free energy
\begin{equation} \label{freee}F_\ep(u, A): = \frac12 \int_\Omega |\nab_A u|^2 + \frac1{2\ep^2}(1-|u|^2)^2+ \hal \int_{\RR^3} |H|^2
\end{equation} are gauge invariant, as well as the density of superconducting Cooper pairs $|u|^
2$, the induced magnetic field $H$, and the vorticity defined below. 

Such type-II superconductors are known to exhibit, as a function of the intensity of the applied field, phase transitions with the occurrence of {\it vortex-lines}: these are zeroes of the order parameter function $u$ around which $u$ has a nontrivial winding number, or degree (the rotation number of its phase, essentially). 

For more on this and the physics background on the model, we refer to the standard texts \cites{SSTS,DeG,Tin}. For a partial rigorous derivation of \eqref{modgl} from a microscopic quantum theory, i.e. the Bardeen--Cooper--Schrieffer theory \cite{BCS},  we refer to \cite{FHSS}.
We also note that rotating superfluids and rotating Bose--Einstein condensates can be described through a very similar Gross--Pitaevskii model, which no longer contains the gauge $A$, and where the  applied field $ H_\ex$ is replaced by a rotation vector whose intensity can be tuned. As seen in prior works, such as \cites{Ser3,aftalionjerrard,aftalionbook,BalJerOrlSon2} and references therein, in the regime of low enough rotation  these models can be treated  with the same techniques as those developed for Ginzburg--Landau.

Our goal is to contribute to the analysis and description of the vortex lines when they appear, which is for $h_\ex$ near the first critical field $\hci$, continuing the line of work of \cite{Rom2}. In particular we provide a more precise expansion of $\hci$ (as $\ep \to 0$) than previously known, and show that below $\hci + C\log \lep$, the vorticity remains bounded. The characterization of $\hci$ and the onset of vortices is also directly related to an ``isoflux problem", first introduced (as far as we know) in \cite{Rom2}, which we analyze in more detail. We believe this problem to be of independent interest. 

The two-dimensional version of the Ginzburg--Landau model, in which vortices are essentially points instead of lines, was studied in details in the mathematics literature, in particular the questions that we address here were entirely solved, with precise description of the first critical field, number and location of the first vortices  and derivation of their effective interaction energy. We refer to the book \cite{SanSerBook} and references therein. The present paper corresponds to the three-dimensional analogue of \cite{SanSer2}, see also \cite{SanSerBook}*{Chapter 9},   while our follow up paper \cite{RomSanSer} will address the derivation of an interaction energy for the vortex lines. 

In contrast with the two dimensional case, the analysis of the three-dimensional (hence more physical) full Ginzburg--Landau model is more recent, but was preceded by many works analyzing vortex lines in a simplified Ginzburg--Landau model without gauge \cites{Riv,LinRiv1,LinRiv2,BetBreOrl,San}, more precisely the three-dimensional version of the model studied in \cite{BetBreHel}. We also refer to \cites{ConJer,DavDelMedRod} for some recent developments on this model.
Most related to our results are the works on vortex lines in  the full  three-dimensional Ginzburg--Landau  of Alama-Bronsard-Montero \cite{AlaBroMon} who first derived $\hci $ and the onset of the first vortex in the case of a ball, Baldo-Jerrard-Orlandi-Soner \cite{BalJerOrlSon2} who derive a mean-field model for many vortices and the main order of the first critical field in the general case, and \cite{Rom2} who gives a more precise expansion of $\hci$ in the general case, and more significantly, proved that global minimizers have no vortices below $\hci$, while they do above this value. One may also point out \cite{JerMonSte} who construct locally minimizing solutions.

One of the main difficulties in 3D compared to 2D is that the vortices have a geometry, they are a priori nonregular curves, and they need to be described via tools from geometric measure theory, in particular currents. 
Several approaches have been put forward to analyze the dependence of the energy $GL_\ep$ on the vortex curves, in particular those of \cites{JerSon,AlbBalOrl2}. As in \cite{Rom2}, the one we will use is the approach of \cite{Rom} which has the major advantage of being $\ep$-quantitative hence more precise. The results of \cite{Rom} allow to approximate the true vortices by  lines which are Lipschitz (and piecewise straight) and around which concentrates an energy at least proportional to their degree times their length.  For a precise statement, see Theorem~\ref{theorem:epslevel} below.  Vortices are thus energetically costly on the one hand, but on the other hand energetically advantageous due to a magnetic term present in the energy and proportional to the intensity of $H_{\ex}$. This energy gain corresponds to the value of the magnetic flux through the vortex or rather the loop formed by the  vortex line on the one hand, and any curve lying on $\partial \Omega$ that allows to close it  (see Figure~\ref{figure0})
of a certain fixed magnetic field $B_0$ that is normal on the boundary. This magnetic field $B_0$ is constructed from $H_{0,\ex}$ and introduced in \cite{Rom2}, see its precise definition below.

\begin{figure}
\centering
\includegraphics[scale=0.6]{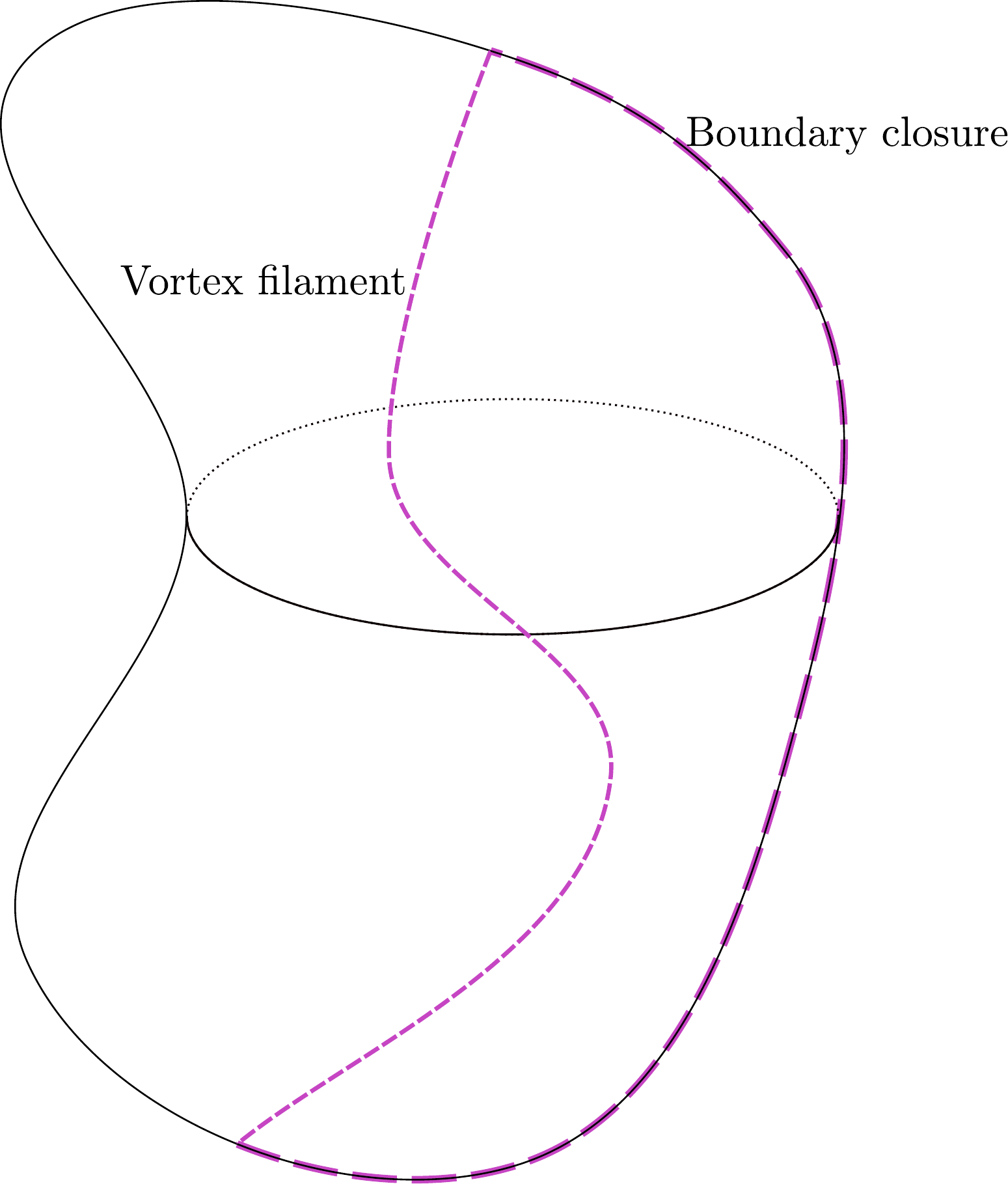}
\caption{Vortex filament and a closing curve.}
\label{figure0}
\end{figure}

 As a result of this energy competition, when $h_{\ex}$ is large enough, more precisely when it exceeds $\hci$, vortices become overall favorable. 
   This can be understood more precisely  via an energy splitting  introduced in \cite{Rom2}, see below.

The competition is thus between the flux gain, which favors vortices that enclose the largest possible domain transversal to the magnetic field  $B_0$, and the cost (proportional to length) term, which favors short vortices. Here, again, one has to understand a vortex as a line which can be completed into a loop by some arbitrary path on the boundary, this path not contributing towards the length cost. 
Such a problem is a three-dimensional (more complicated) analogue of an isoperimetric problem, with area replaced by flux with respect to a fixed vector field. By analogy with ``isoperimetric problem" we call it an ``isoflux problem". We believe it is a natural problem, of independent interest, since the same questions (existence of minimizers, regularity...) as for standard isoperimetric problems can be asked.

Let us first start by defining the isoflux problem more precisely.

\subsection{The isoflux problem}
Given a domain $\Omega \subset \RR^3$, 
 we  let $\NN$ be the space of normal $1$-currents supported in $\overline\Omega$, with boundary supported on $\partial\Omega$. We always denote 
by $|\cdot |$ the mass of a current. Recall that normal currents are currents with finite mass whose boundaries have finite mass as well.
We also let $X$ denote the class of currents in $\NN$ which are simple oriented Lipschitz curves. 
An element of $X$  must either be  a loop contained in $\overline\Omega$ or have its two endpoints on $\partial \Omega$. Given $\gamma\in(0,1]$, we let $C_T^{0,\gamma}(\Omega)$ denote the space of vector fields $B \in C^{0,\gamma}(\Omega)$ such that $B\times\vec\nu=0$ on $\partial \Omega$, where hereafter $\vec\nu$ is the outer unit normal to $\partial\Omega$.  The symbol $^*$ will denote its dual space. 

Such a $B$ may also be interpreted as 2-form, we will not distinguish the two in our notation.

For any vector field $B\in C_T^{0,1}(\Omega,\RR^3)$, and any  $\ga\in\NN$ we denote by  $\pr {B,\ga}$ the value of $\ga$ applied to $B$, which corresponds to the circulation of the vector field $B$ on $\ga$ when $\ga$ is a curve. If $\mu$ is a 2-form then $\pr {B,\mu}$ is the scalar product of the 2-form $\mu$ and $B$, seen as  a 2-form in this case, in $L^2(\Omega,\Lambda^2(\RR^3))$.

\begin{defi}[Isoflux problem]
The isoflux problem relative to $\Omega $ and a vector field $B_0\in C_T^{0,1}(\Omega, \RR^3)$,  is  the question of maximizing  over $\NN$ the ratio 
\begin{equation*}
\R(\ga):=\dfrac{\pr{B_0,\ga}}{|\ga|}.
\end{equation*}
\end{defi}
Obviously, the term ``isoflux" is an abuse of language, since this rather corresponds to an ``isoperimetric maximal flux problem."

In the next subsection, we will see how to derive this problem formally from \eqref{modgl}, while identifying the suitable vector field $B_0$ that we need to use. 

Our main independent result on the isoflux problem is the following.

\begin{theorem}\label{teo:ratio}
Assume $\Omega$ is a bounded open set and $B_0\in C^0(\Omega, \RR^3)$. The supremum of the ratio $\R$ over $\NN$ is achieved. Moreover, if
\begin{equation} \label{conddd}
\sup_{\mathcal C_{\mathrm{loops}}}\R<\sup_{\NN}\R,
\end{equation}
where $\mathcal C_{\mathrm{loops}}$ denotes the space of closed oriented Lipschitz curves (that is, loops) supported in $\overline \Omega$, 
then the supremum of the ratio $\R$ over $\NN$ is achieved at an element of $X$ which is not a loop, and hence has two endpoints in $\partial \Omega$.
\end{theorem}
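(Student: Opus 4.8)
The plan is to combine the decomposition theorem for normal $1$-currents with a compactness argument at the level of currents, the hypothesis \eqref{conddd} entering only at the last step. Set $m:=\sup_X\R$, and recall that $\R$ is invariant under $\ga\mapsto\lambda\ga$ for $\lambda>0$, so competitors may be normalized to unit mass. \textbf{Step 1 (reduction to curves).} I would first show $\sup_\NN\R=m$; the inequality $\ge$ is immediate since $X\subset\NN$. Conversely, given $T\in\NN$, the decomposition theorem for normal $1$-currents writes $T=\int_A\ga_a\,d\mu(a)$ with each $\ga_a$ a simple Lipschitz curve and with no cancellation in mass nor in boundary mass, i.e.\ $\|T\|=\int_A\|\ga_a\|\,d\mu$ and $\|\partial T\|=\int_A\|\partial\ga_a\|\,d\mu$ as measures. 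Since $\|T\|$ is carried by $\overline\Omega$ and $\|\partial T\|$ by $\partial\Omega$, for $\mu$-a.e.\ $a$ the curve $\ga_a$ lies in $\overline\Omega$ with $\partial\ga_a$ supported on $\partial\Omega$; each such $\ga_a$ is then either an element of $X$ or a solenoidal cycle, and in either case $\R(\ga_a)\le m$ (a solenoid being an average of Lipschitz loops, so $\R\le\sup_{\mathcal C_{\mathrm{loops}}}\R\le m$). Using linearity of $\pr{B_0,\cdot}$ along the decomposition and $|T|=\int_A|\ga_a|\,d\mu$,
\[
\pr{B_0,T}=\int_A\pr{B_0,\ga_a}\,d\mu=\int_A\R(\ga_a)\,|\ga_a|\,d\mu\le m\,|T|,
\]
hence $\R(T)\le m$. (If $m\le0$ then $B_0\equiv0$, both assertions are trivial, and \eqref{conddd} fails; so assume $m>0$.)

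\textbf{Step 2 (compactness and existence).} Take a maximizing sequence $T_n\in\NN$ with $|T_n|=1$ and $\R(T_n)\ge m-1/n^2$. Decompose each $T_n$ into curve pieces and discard those of ratio $<m-1/n$; since every piece has $\R\le m$, an averaging argument bounds the discarded mass by $1/n$, so the retained current $T_n'$ satisfies $|T_n'|\to1$ and $\R(T_n')\to m$. The point now is that a short arc or a short loop carries circulation small compared to its length — for loops this is just continuity of $B_0$, and for arcs it relies on the behaviour of $B_0$ near $\partial\Omega$ — so a piece of ratio $\ge m-1/n$ cannot be shorter than some fixed $\ell_0>0$; hence the retained arc pieces of $T_n'$ have $\mu$-measure at most $1/\ell_0$, and therefore $|\partial T_n'|$ is bounded. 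By the compactness theorem for normal currents (Federer--Fleming), a subsequence of $T_n'$ flat-converges to some $\bar\ga\in\NN$ with $|\bar\ga|\le1$; since $T\mapsto\pr{B_0,T}$ is continuous under flat convergence with bounded mass (approximate the continuous field $B_0$ uniformly by smooth ones), $\pr{B_0,\bar\ga}=\lim_n\pr{B_0,T_n'}=m>0$. Hence $|\bar\ga|>0$, so $\R(\bar\ga)=m/|\bar\ga|\ge m$, while $\bar\ga\in\NN$ forces $\R(\bar\ga)\le m$; thus $\R(\bar\ga)=m$, and the supremum over $\NN$ is achieved.

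\textbf{Step 3 (a maximizer in $X$, and conclusion).} Decompose the maximizer $\bar\ga=\int_A\ga_a\,d\mu$ as in Step~1. Equality in the inequality of Step~1 forces $\int_A(m-\R(\ga_a))|\ga_a|\,d\mu=0$; the integrand is nonnegative and $\int_A|\ga_a|\,d\mu=|\bar\ga|>0$, so on a positive-measure set of indices $a$ one has $|\ga_a|>0$ and $\R(\ga_a)=m$. Under \eqref{conddd} such a $\ga_a$ is neither a loop nor a solenoid, since those have $\R\le\sup_{\mathcal C_{\mathrm{loops}}}\R<m$; hence it is an element of $X$ and, not being a loop, it is an arc with two endpoints on $\partial\Omega$. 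Choosing one such piece as the desired maximizer completes the proof.

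\textbf{Main obstacle.} The analytic core is Step~2: bounding the boundary mass of a suitably modified maximizing sequence so that Federer--Fleming applies, which amounts to ruling out concentration of the maximizing sequence on arbitrarily short pieces. This is exactly where the behaviour of $B_0$ near $\partial\Omega$ (and enough regularity of $\partial\Omega$) is essential; the ``no boundary cancellation'' in the decomposition of normal $1$-currents is the accompanying measure-theoretic ingredient, used throughout Steps~1--3. Condition \eqref{conddd} is used only to exclude the a priori possibility that every maximizer is a loop (or a solenoid).
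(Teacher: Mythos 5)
Your proposal is correct and follows essentially the same route as the paper: Smirnov's decomposition of normal $1$-currents, removal of short (equivalently, low-ratio) pieces to bound the boundary mass of a modified maximizing sequence so that Federer--Fleming compactness applies, continuity of $T\mapsto\pr{B_0,T}$ under weak convergence with bounded mass, and the averaging identity for $\R$ along the decomposition to show that almost every piece of a maximizer is itself a maximizer, with \eqref{conddd} ruling out loops and solenoids. The one step you assert rather than prove is that an elementary solenoid has ratio at most $\sup_{\mathcal C_{\mathrm{loops}}}\R$ --- it is not literally an average of Lipschitz loops but a length-average of an infinite Lipschitz curve, and the paper establishes the claim by closing long truncations of that curve with a uniformly bounded segment and passing to the limit $s\to\infty$.
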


\begin{remark} Condition \eqref{conddd} cannot be removed,  a counter example is described in  Remark \ref{remtorus}. 
\end{remark}

\begin{remark}
If we assume that  $B_0\times\vec\nu =0$ on $\partial \Omega$, one immediately sees that if a simple Lipschitz curve $\gamma$ in $\NN$ achieves the supremum of the ratio over $\NN$, then $\gamma$ is a closed loop supported in $\Omega$ or $\gamma\cap \partial\Omega=\{b(\gamma),e(\gamma)\}$, where $b(\gamma)$ and $e(\gamma)$ denote the endpoints of the curve. In both cases, $\gamma$ cannot have a portion on $\partial \Omega$, because that would increase the denominator of the ratio, leaving the numerator unchanged. 
\end{remark}

The proof of this theorem relies on Smirnov's decomposition  of currents into the sum of Lipschitz curves and  a solenoidal (i.e., divergence-free) charge \cite{Smi}, for details on this terminology see Section \ref{secisoflux}. 

To show that the vorticity remains bounded near the first critical field, we will need   a non-degeneracy condition of the isoflux problem to be satisfied by $\Omega$ and $B_0$, which we introduce for the first time in this paper. 
 Given a $1$-current $\ga$, we let
\begin{equation*}
\|\ga\|_*\colonequals \sup_{\|B\|_{C_T^{0,1}(\Omega,\RR^3)}\leq 1}\pr{B,\ga}
\end{equation*}
be the dual norm of $C_T^{0,1}(\Omega,\RR^3)$.

\begin{condition}[Nondegeneracy]\label{nondegencond} There exists a unique curve $\ga_0$ in $X$ such that
\begin{equation}\label{cond1}
\R(\ga_0) = \sup_{\ga\in\NN}\frac{\pr{B_0,\ga}}{|\ga|}.
\end{equation}
Moreover, there exists constants $c_0,N>0$ depending on $\Omega$ and $B_0$ such that 
\begin{equation}\label{cond2}
\R(\ga_0)-\R(\ga)\geq C_0\min\left(\|\ga-\ga_0\|_*^N,1\right)
\end{equation}
for every $\ga\in X$. 
\end{condition}

The vector field $B_0$ that we will work with for the analysis of the Ginzburg--Landau functional is the one constructed in 
 \cite{Rom2}. It appears in the Hodge decomposition of $A_0$ in $\Omega$ --- where $(e^{ih_\ex\phi_0},h_\ex A_0)$ is the Meissner state, see below --- as $A_0 = \curl B_0 + \nabla \phi_0$,  supplemented with the conditions $\diver B_0 = 0$, and $B_0\times\vec\nu = 0$ on $\partial\Omega$. Moreover, it is such that
\begin{equation}\label{B}
\int_\Omega (- \Delta B_0+ B_0-H_{0,\ex})\cdot A=0,
\end{equation}
for any divergence-free $A\in C_0^\infty(\Omega,\RR^3)$. It is worth pointing out that in \cite{Rom2}, it was mistakenly stated that $B_0$ is a weak solution to $- \Delta B_0+ B_0 =   H_{0,\ex}$ in $\Omega$. More precisely, it was claimed that \eqref{B} holds for any $A\in C_0^\infty(\Omega,\RR^3)$, but there is a mistake in a Hodge decomposition used in its derivation, which leads to an incorrect conclusion.

Also, we recall that $\phi_0$ is supplemented with the conditions $\int_\Omega \phi_0=0$ and $\nabla \phi_0\cdot \vec \nu=A_0\cdot \vec \nu$ on $\partial\Omega$.

Our next result shows that Condition \ref{nondegencond} holds in the case of $\Omega$ being a  ball and $B_0$ being the field  defined above, when the external field $H_{0,\ex}$ is chosen to be  constant vertical. This is the only case for which an explicit formula for $B_0$ is known.

\begin{theorem}\label{cond:ball} Condition \ref{nondegencond} holds in the case $\Omega=B(0,R)$ with $H_{0, \ex}= \hat z$ and $B_0$ defined above, with $\ga_0$ being the vertical diameter seen as a $1$-current with multiplicity $1$, oriented in the direction of positive $z$ axis, and $N=2$. 
\end{theorem}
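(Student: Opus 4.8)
\emph{Approach.} Everything hinges on the closed-form description of $B_0$ for the ball with vertical applied field, available since \cite{AlaBroMon} (see also \cite{Rom2}). By the rotational and reflection symmetries, in the natural gauge the Meissner vector potential $A_0$ is purely azimuthal, so $\phi_0\equiv 0$, $\curl B_0 = A_0$, and $B_0$ is \emph{poloidal}: in cylindrical coordinates $(\rho,\vp,z)$ one has $B_0 = B_\rho(\rho,z)\,\hat\rho + B_z(\rho,z)\,\hat z$ with $\curl B_0 = a(\rho,z)\,\hat\vp$, where $a>0$ in the interior and $a=0$ on the $z$-axis, and all of $a$, $B_\rho$, $B_z$ are explicit through the modified spherical Bessel function $i_1$. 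The first step is to record this and to read off the facts we need: $B_z(0,z)$ is even in $z$, strictly decreasing in $|z|$, and equals its mean $\la_0:=\frac{1}{2R}\int_{-R}^{R}B_z(0,z)\,dz$ at $z=\pm z_0$ for a unique $z_0\in(0,R)$; and $|B_0(\rho,z)|$ is strictly decreasing in $\rho$ on every sphere $\{|x|=r\}$, so that $\sup_{\overline{B(0,R)}}|B_0|=B_z(0,0)>\la_0$ is attained only at the origin. In particular the naive bound $\R(\ga)\le\sup|B_0|$ is not sharp, and the interior maximum of $|B_0|$ is out of reach for competitors in $\NN$, which must touch $\partial B(0,R)$.

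\emph{Reduction to two dimensions.} Since the $1$-form dual to $B_0$ carries no $d\vp$, for every $\ga\in\NN$ one has $\pr{B_0,\ga}=\pr{\omega,\pi_*\ga}$, where $\pi\colon(\rho,\vp,z)\mapsto(\rho,z)$ is the orthogonal (hence $1$-Lipschitz) projection onto a meridian half-disk $D=\{\rho\ge 0\}\cap B(0,R)$ and $\omega:=B_\rho\,d\rho+B_z\,dz$ on $D$, which satisfies $d\omega=-a\,d\rho\wedge dz$; moreover $|\pi_*\ga|\le|\ga|$. Thus the supremum of $\R$ over $\NN$ equals that of the two-dimensional isoflux problem for $(D,\omega)$, whose candidate maximizer is the straight edge $\gamma_0^{\mathrm{2D}}=\{\rho=0\}$, and one computes $\R(\gamma_0^{\mathrm{2D}})=\la_0=\frac{1}{2R}\iint_D a$ using that $B_\rho=0$ on the straight part of $\partial D$, that $\omega$ vanishes on the circular part (there $B_0$ is normal to $\partial B(0,R)$), and Green's theorem.

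\emph{A calibration and the upper bound.} The core of the proof is to construct a \emph{calibrating potential} $\chi\in C^1(D)$, constant on the circular part of $\partial D$ and vanishing at the two poles, with $\partial_\rho\chi=0$ and $\partial_z\chi=B_z(0,z)-\la_0$ on the axis — consistent precisely because $\la_0$ is the mean — and such that the $1$-form $W:=\omega-d\chi$ satisfies $\|W\|\le\la_0$ on $D$, with equality exactly on the axis, where $W=\la_0\,dz$, and $\la_0^2-\|W\|^2\ge c_0\,\rho^2$ near the axis. Any competitor $\bar\ga$ in $D$ is a loop or has its endpoints on the circular part of $\partial D$ or at a pole, so $\int_{\bar\ga}\omega=\int_{\bar\ga}W\le\la_0\,\mathrm{length}(\bar\ga)\le\la_0|\ga|$, i.e.\ $\R(\ga)\le\la_0$; equality forces $\bar\ga$ to lie on the axis and run in the $+z$ direction, hence $\ga=\ga_0$. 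This yields at once that the supremum is attained only at the vertical diameter, i.e.\ the uniqueness in Condition~\ref{nondegencond}. An equivalent route, avoiding $\chi$, is the exact identity $\pr{B_0,\ga}=2R\la_0-\iint_{\Sigma}a$, where $\Sigma\subset D$ is the region cut off between $\bar\ga$ and the axis, which reduces matters to the weighted isoperimetric inequality ``the $a$-mass of the region cut off on the far side of $\bar\ga$ is at most $\la_0\,\mathrm{length}(\bar\ga)$''.

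\emph{Nondegeneracy with $N=2$, and the main obstacle.} For \eqref{cond2}: if $\bar\ga$ stays in a fixed tube around the axis, then $\R(\ga_0)-\R(\ga)\ge\frac{1}{|\ga|}\int_{\bar\ga}(\la_0-\|W\|)\,ds\gtrsim\frac{1}{|\ga|}\int_{\bar\ga}\rho^2\,ds\gtrsim\frac{1}{|\ga|^2}\big(\int_{\bar\ga}\rho\,ds\big)^2$ by Cauchy--Schwarz; since $|\ga|$ is then comparable to $2R$ and $\int_{\bar\ga}\rho\,ds$ controls, up to a constant, the dual norm $\|\pi_*\ga-\gamma_0^{\mathrm{2D}}\|_*$ (pair $\pi_*\ga-\gamma_0^{\mathrm{2D}}$ against a Lipschitz tangential field detecting the distance to the axis), and since $\R(\ga)$ close to $\la_0$ forces $|\ga|$ close to $|\pi_*\ga|$ — so $\|\ga-\ga_0\|_*$ is controlled by $\|\pi_*\ga-\gamma_0^{\mathrm{2D}}\|_*$ plus a small azimuthal-spreading term — one obtains $\R(\ga_0)-\R(\ga)\gtrsim\|\ga-\ga_0\|_*^2$. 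For $\bar\ga$ outside the tube, a uniform gap $\la_0-\|W\|\ge c>0$ over a macroscopic portion of $\bar\ga$ gives $\R(\ga)\le\la_0-c'$, which is absorbed by the truncation $\min(\cdot,1)$. The main obstacle is the construction of $\chi$ with the sharp pointwise bound $\|\omega-d\chi\|\le\la_0$ and the quadratic gap: this is a genuinely two-dimensional eikonal-type problem that must be solved essentially by hand, leaning on the closed form of $B_0$ — in particular on the monotonicity of $i_1(r)/r$ and on the exact expression for $|B_0|^2$ along each sphere — and it may well be cleaner to argue instead through the equivalent weighted isoperimetric inequality on $D$. A secondary technical point is the careful transfer between $X$-competitors in the ball and their meridian projections, together with the attendant comparison of the dual norms $\|\cdot\|_*$.
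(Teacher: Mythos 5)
Your plan has a genuine gap at its center. Everything in your quantitative argument — uniqueness of the maximizer, the inequality $\R(\ga_0)-\R(\ga)\gtrsim \|\ga-\ga_0\|_*^2$ inside the tube, and the order-one gap outside it — is made to rest on the existence of a calibrating potential $\chi$ on the half-disk $D$, constant on the circular arc, with $\|\omega-d\chi\|\le \la_0$ everywhere, equality only on the axis, and the quadratic gap $\la_0^2-\|\omega-d\chi\|^2\ge c_0\rho^2$ near the axis. You do not construct such a $\chi$; you explicitly flag it as ``the main obstacle'' to be ``solved essentially by hand''. But this eikonal-type statement \emph{is} the nondegeneracy assertion of Condition~\ref{nondegencond} in disguise (its mere existence with the non-strict bound is a duality reformulation of $\sup\R=\la_0$, and the quadratic gap encodes \eqref{cond2}), so leaving it unproven means the proof is incomplete precisely at the step that carries all the content. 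Note also that the uniqueness part does not need to be re-derived at all: the paper simply quotes \cite{Rom2}*{Proposition 4.1} for the fact that the vertical diameter is the unique maximizer in $X$, and devotes the proof entirely to \eqref{cond2}.

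What the paper actually does is essentially your ``equivalent weighted isoperimetric route'', carried out in full. It projects azimuthally onto a meridian half-disk as you do, but with the quantitative comparison $\|\ga-\ga_{2\D}\|_*\le C\sqrt{|\ga|-|\ga_{2\D}|}$; this square root is exactly what lets a quadratic 2D estimate survive the 3D reduction with $N=2$, so the ``transfer between competitors and their meridian projections'' you call a secondary technical point is in fact load-bearing. It then decomposes the projected current into a boundary-to-boundary curve $\ga_s$ plus loops, proves the quadratic estimate \eqref{estGammainotloop} for $\ga_s$ by Stokes together with the explicit lower bound $\curl B_0\cdot\hat y\ge \frac{R}{2\sinh R}\,x$ and a Cauchy--Schwarz argument over horizontal slices (plus sector/segment comparisons and Dido's inequality when $|\ga_s|<|\ga_0|$), and proves a \emph{uniform} gap \eqref{estGammailoop} for loops by a separate argument. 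Your tube/non-tube dichotomy does not by itself deliver that uniform gap for loops spending most of their length inside the tube (there the pointwise gap of the putative calibration is small, and one must instead argue that such loops enclose little flux), nor the a priori upper bound on $|\ga|$ that your division by $|\ga|^2$ tacitly uses; both require the case analysis the paper performs in Steps~2 and~4. To repair your write-up you would either have to actually build $\chi$ with the stated properties from the explicit Bessel-function formula for $B_0$, or abandon the calibration and carry out the Stokes/weighted-isoperimetric estimates directly, which is the paper's route.
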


Several interesting open questions remain about the isoflux problem:
\begin{itemize}
\item Prove that  condition \eqref{conddd} is satisfied if  $B_0$ is the Meissner field  (as defined above), at least for a large class of applied fields $H_{0,\ex}$ and domains $\Omega$.

\item Prove that the nondegeneracy condition is generic in a suitable sense, and true for a large class of explicit examples.

\item Establish a complete regularity theory for maximizers, as is done for the isoperimetric problem. In this regard, let us mention that regularity results were established in \cite{MonSte} for minimizers of a $\ga$-limit functional of Gross--Pitaevski, whose minimization leads to a problem similar to the isoflux problem, except for the fact that they deal with a weighted length term, with a weight that vanishes on $\partial \Omega$, and with a vector field $B_0$ that also vanishes (to order $k$) on $\partial\Omega$. Therefore, one would need to check if these results can adapt to our situation.
\end{itemize}

\subsection{Formal derivation of the isoflux problem and of the first critical field}\label{secformal} 
We now explain how to formally derive the isoflux problem as done for the first time  in \cite{Rom2}. Let us also emphasize that \cites{AlaBroMon,BalJerOrlSon2} already involve the  optimum in the isoflux problem. 

Let us first  introduce  further concepts and notation from the theory of currents and differential forms. 
In Euclidean spaces, vector fields can be identified with $1$-forms. In particular, a vector field $F=(F_1,F_2,F_3)$ can be identified with the $1$-form $F_1dx_1+F_2dx_2+F_3dx_3$. We
use the same notation for both the vector field and the $1$-form. 
In addition, a vector field $F$ satisfying the boundary condition $F\times \vec\nu=0$ on $\partial \Omega$ is equivalent to a $1$-form $F$ such that $F_T=0$ on $\partial \Omega$, where $F_T$ denotes the restriction of the tangential part of F to $\partial \Omega$.

We define the superconducting current of a pair $(u,A)\in H^1(\Omega,\C)\times H^1(\Omega,\RR^3)$ as the $1$-form
$$
j(u,A)=(iu,d_A u)=\sum_{k=1}^3 (iu,\partial_k u-iA_ku)dx_k
$$
and  the gauge-invariant vorticity $\mu(u,A)$ of a configuration $(u,A)$ through
$$
 \mu(u,A)=dj(u,A)+dA.
$$
Thus $\mu(u,A)$ is an exact $2$-form in $\Omega$. 
It can also be seen as a $1$-dimensional current, which is defined through its action on $1$-forms by the relation
$$
\mu(u,A)(\phi)=\int_\Omega \mu(u,A)\wedge \phi.
$$
 The vector field corresponding to $\mu(u,A)$ (i.e. the $J(u,A)$ such that $\mu(u,A)\wedge \phi = \phi(J(u,A))\,dV$ where $dV$ is the euclidean volume form, is at the same time a gauge-invariant analogue of twice the Jacobian determinant, see for instance \cite{JerSon}, and a three-dimensional analogue of the gauge-invariant vorticity of \cite{SanSerBook}.

It is worth recalling that the boundary of a $1$-current $T$ relative to a set $\Theta$ is a $0$-current $\partial T$, and that $\partial T=0$ relative to $\Theta$ if $T(d\phi)=0$ for all $0$-forms $\phi$ with compact support in $\Theta$.
In particular, an integration by parts shows that the $1$-dimensional current $\mu(u,A)$ has zero boundary relative to $\Omega$.

We let $\mathcal D^k(\Theta)$ be the space of smooth $k$-forms with compact support in $\Theta$. For a $k$-current $T$ in $\Theta$, we define its mass by
$$
|T|(\Theta)\colonequals \sup \left\{T(\phi) \ | \ \phi\in \mathcal D^k(\Theta), \ \|\phi\|_\infty\leq 1\right\}
$$ 
and by
$$
\|T\|_{\F(\Theta)}\colonequals\sup \left\{T(\phi) \ | \ \phi\in \mathcal D^k(\Theta), \ \max\{\|\phi\|_\infty,\|d\phi\|_\infty\}\leq 1\right\}
$$
its flat norm. 
\begin{remark}
For $0$-currents, the flat and $(C_0^{0,1})^*$ norms coincide, whereas for $k$-currents the former is stronger than the latter.
\end{remark}

The Ginzburg--Landau model  admits a unique state, modulo gauge transformations,  that we will call ``Meissner state'', obtained by minimizing   $GL_\ep(u,A)$ under the constraint $|u| = 1$, so that in particular it is independent of $\ep$.  In the  gauge  where $\diver A = 0$, this state is of the form
\begin{equation}
\label{refA0}(e^{ih_\ex\phi_0},h_\ex A_0),
\end{equation}  
where $\phi_0$, $A_0$ depend only on $\Omega$ and $H_{0,\ex}$, and was  first identified in \cite{Rom2}. We call it Meissner state in reference to the Meissner effect in physics, i.e. the complete repulsion of the magnetic field by the superconductor when the superconducting density  saturates at $|u|=1$. It is not a true critical point of \eqref{modgl} (or true solution of the associated Euler--Lagrange equations) but is a good  approximation of one as $\ep \to 0$. It  corresponds to a situation with perfect superconductivity and no vortices.
The energy of this state is easily seen to be proportional to $h_\ex^2$, we write
\begin{equation*}
GL_\ep(e^{ih_\ex\phi_0}, h_{\ex}A_0)=: h_{\ex}^2 J_0.
\end{equation*}
 
 The first critical field occurs  when other competitors with vortices  have an energy strictly less than $ h_{\ex}^2 J_0$, as first studied in \cite{Rom2}.

 We now recall the  algebraic splitting of the Ginzburg--Landau energy from \cite{Rom2}. This decomposition of the energy is important as it allows to follow the roadmap of \cite{SanSerBook} in three dimensions.
 \begin{proposition} For any sufficiently integrable $(\u,\A)$, letting $u=e^{-ih_\ex\phi_0}\u$ and $A=\A-h_\ex A_0$, where $(e^{ih_\ex\phi_0},h_\ex A_0)$ is the approximate Meissner state, we have
\begin{equation}\label{Energy-Splitting}
GL_\ep(\u,\A)=h_\ex^2 J_0+F_\ep(u,A)
-h_\ex\int_\Omega \mu(u,A)\wedge B_0+r_0,
\end{equation}
where $F_\ep(u,A)$ is as in \eqref{freee} and
$$ 
r_0=\frac{h_\ex^2}2 \int_\Omega (|u|^2-1)|\curl B_0|^2.
$$
In particular, $|r_0|\leq C\ep h_\ex^2F_\ep(|u|,0)^{\frac12}$.
\end{proposition}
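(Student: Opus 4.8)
The plan is to establish \eqref{Energy-Splitting} by a direct algebraic expansion of $GL_\ep(\u,\A)$ after the substitution $\u=e^{ih_\ex\phi_0}u$, $\A=A+h_\ex A_0$, and then sorting the resulting terms by their degree of homogeneity in $(u,A)$. Since $|\u|=|u|$, the potential term $\frac1{2\ep^2}\int_\Omega(1-|\u|^2)^2$ is unchanged. Using the Hodge decomposition $A_0=\curl B_0+\nab\phi_0$ in $\Omega$ one gets
\begin{equation*}
\nab_\A\u=e^{ih_\ex\phi_0}\bigl(\nab_A u-ih_\ex(\curl B_0)\,u\bigr),
\end{equation*}
so that, expanding the square and recognizing the mixed term as a pairing with the current $j(u,A)=(iu,d_Au)$,
\begin{equation*}
\hal\int_\Omega|\nab_\A\u|^2=\hal\int_\Omega|\nab_A u|^2-h_\ex\int_\Omega(\curl B_0)\cdot j(u,A)+\frac{h_\ex^2}2\int_\Omega|\curl B_0|^2|u|^2,
\end{equation*}
and the last integral splits as $\frac{h_\ex^2}2\int_\Omega|\curl B_0|^2+r_0$ with $r_0$ exactly as in the statement. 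Likewise, writing $\curl\A-H_\ex=\curl A+h_\ex(\curl A_0-H_{0,\ex})$ and expanding,
\begin{equation*}
\hal\int_{\RR^3}|\curl\A-H_\ex|^2=\hal\int_{\RR^3}|\curl A|^2+h_\ex\int_{\RR^3}\curl A\cdot(\curl A_0-H_{0,\ex})+\frac{h_\ex^2}2\int_{\RR^3}|\curl A_0-H_{0,\ex}|^2.
\end{equation*}

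I would then collect the pieces. The degree-$0$ and degree-$2$ terms in $(u,A)$ --- namely $\hal\int_\Omega|\nab_A u|^2$, the unchanged potential, and $\hal\int_{\RR^3}|\curl A|^2$ --- are exactly $F_\ep(u,A)$ as in \eqref{freee}. The two terms with no $(u,A)$, i.e.\ $\frac{h_\ex^2}2\int_\Omega|\curl B_0|^2+\frac{h_\ex^2}2\int_{\RR^3}|\curl A_0-H_{0,\ex}|^2$, equal $GL_\ep(e^{ih_\ex\phi_0},h_\ex A_0)=h_\ex^2J_0$: plugging the Meissner state \eqref{refA0} directly into $GL_\ep$ and using $\nab_{h_\ex A_0}e^{ih_\ex\phi_0}=-ih_\ex(\curl B_0)e^{ih_\ex\phi_0}$ together with $|e^{ih_\ex\phi_0}|=1$ reproduces precisely these two integrals (and makes visible that $J_0$ is $\ep$-independent). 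It then remains to show that the two degree-$1$ mixed terms combine into $-h_\ex\int_\Omega\mu(u,A)\wedge B_0$, after which the leftover is $r_0$ and the identity is complete.

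For the mixed terms, write $\mu(u,A)=dj(u,A)+dA$ as a $2$-form ($j=j(u,A)$ being a $1$-form). Stokes gives $\int_\Omega dj\wedge B_0=\int_{\partial\Omega}j\wedge B_0+\int_\Omega j\wedge dB_0$, and the boundary integral vanishes because $B_0\times\vec\nu=0$ on $\partial\Omega$ forces the tangential restriction of the $1$-form $B_0$ to $\partial\Omega$ to vanish; since $\int_\Omega j\wedge dB_0=\int_\Omega j(u,A)\cdot\curl B_0$ and $\int_\Omega dA\wedge B_0=\int_\Omega\curl A\cdot B_0$, this yields $\int_\Omega\mu(u,A)\wedge B_0=\int_\Omega j(u,A)\cdot\curl B_0+\int_\Omega\curl A\cdot B_0$. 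The first summand already matches the kinetic mixed term up to the sign $-h_\ex$, so the one identity left to prove is
\begin{equation*}
\int_{\RR^3}\curl A\cdot(\curl A_0-H_{0,\ex})=-\int_\Omega\curl A\cdot B_0.
\end{equation*}
Here I would invoke the variational characterization of the Meissner state: since $(e^{ih_\ex\phi_0},h_\ex A_0)$ minimizes $GL_\ep$ under $|u|=1$, its Euler--Lagrange equation in $A$ reads $\curl(\curl A_0-H_{0,\ex})=\indic_\Omega(\nab\phi_0-A_0)=-\indic_\Omega\curl B_0$ weakly in $\RR^3$; integrating by parts over $\RR^3$ (with decay at infinity) and then once more over $\Omega$ --- the boundary term $\int_{\partial\Omega}(A\times B_0)\cdot\vec\nu$ again vanishing because $B_0\times\vec\nu=0$ --- gives the claimed identity. (Restricted to $\Omega$ and tested against divergence-free compactly supported fields, this Euler--Lagrange equation is precisely the relation \eqref{B} defining $B_0$; this is the point at which one must use \eqref{B} and not the stronger, erroneous pointwise identity $-\Delta B_0+B_0=H_{0,\ex}$ alluded to in the text.) Finally, since $B_0\in C_T^{0,1}$ we have $\|\curl B_0\|_{L^\infty(\Omega)}\le C$, so Cauchy--Schwarz gives
\begin{equation*}
|r_0|\le\tfrac{h_\ex^2}2\|\curl B_0\|_{L^\infty}^2\int_\Omega\bigl||u|^2-1\bigr|\le Ch_\ex^2|\Omega|^{1/2}\Bigl(\int_\Omega(1-|u|^2)^2\Bigr)^{1/2}\le C\ep h_\ex^2 F_\ep(|u|,0)^{1/2},
\end{equation*}
using $F_\ep(|u|,0)\ge\frac1{2\ep^2}\int_\Omega(1-|u|^2)^2$.

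The main obstacle is the magnetic mixed-term identity $\int_{\RR^3}\curl A\cdot(\curl A_0-H_{0,\ex})=-\int_\Omega\curl A\cdot B_0$: it has to be obtained from the precise variational and boundary characterization of the Meissner state together with \eqref{B}, and one must be careful about the matching of normal and tangential traces across $\partial\Omega$, the decay at infinity, and the exact regularity assumptions on $(\u,\A)$ that legitimize the integrations by parts. Everything else is Pythagorean-type bookkeeping.
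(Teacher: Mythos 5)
Your proposal is correct and follows the standard algebraic splitting that the paper itself only recalls from \cite{Rom2} without proof: expand $GL_\ep$ after the substitution $\u=e^{ih_\ex\phi_0}u$, $\A=A+h_\ex A_0$, identify the quadratic terms with $F_\ep(u,A)$ and the zeroth-order terms with $h_\ex^2J_0$, and reduce the cross terms to $-h_\ex\int_\Omega\mu(u,A)\wedge B_0$ via Stokes and the weak Meissner equation $\curl(\curl A_0-H_{0,\ex})=-\indic_\Omega\curl B_0$. You correctly isolate the only nontrivial point, namely the magnetic cross-term identity $\int_{\RR^3}\curl A\cdot(\curl A_0-H_{0,\ex})=-\int_\Omega\curl A\cdot B_0$, and your use of the boundary condition $B_0\times\vec\nu=0$ to kill the boundary terms is exactly what makes it work.
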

This proposition thus allows, up to a small error $r_0$, to exactly separate the energy of the Meissner state $h_{\ex}^2 J_0$, the positive free energy cost $F_\ep$ and the magnetic gain $-h_{\ex} \int \mu(u,A) \wedge B_0$.

 The constructions from \cite{Rom} will allow us to replace $\mu(u, A)$ (up to a small error as $\ep \to 0$) by a current supported on Lipschitz curve $\Gamma$, see Theorem \ref{theorem:epslevel} below. In particular $\int \mu(u, A) \wedge B_0$ can be replaced by $\pr{\Gamma, B_0}$.
 On the other hand, the energy cost $F_\ep (u, A)$  of a  vortex on a curve $\Gamma$ is at least $\pi |\Gamma| \lep$, as seen  in Theorem \ref{theorem:epslevel} from \cite{Rom} below. Neglecting $\int_{\RR^3 \backslash \Omega} |\curl A|^2$, we arrive at 
 $$GL_\ep (\u, \A) - h_{\ex}^2 J_0\simeq \pi |\Gamma | \lep   - h_{\ex} \pr{\Gamma, B_0}.$$
 Thus a vortex line both costs energy proportional to its length and gains energy proportional to the flux $\pr {\Gamma, B_0}$ which it can carry through. A vortex line can be favorable if and only if 
 $h_{\ex} \ge H_{c_1}^0$ where 
\begin{equation*}
H_{c_1}^0\colonequals \frac\lep{2\R_0},
\end{equation*}
and 
\begin{equation}\label{R0}
\R_0\colonequals \sup\limits_{\ga \in X}\frac{\pr{B_0,\ga}}{|\ga|} = \sup_{\Gamma \in X} \R(\Gamma).
\end{equation}
Moreover, near $H_{c_1}^0$, favorable   vortex lines are maximizers of the isoflux problem. We will denote by $\Gamma_0$ such a maximizer (given for instance by Theorem \ref{teo:ratio}). 
As $h_{\ex}$ gets further increased beyond $H_{c_1}^0$, a multiplicity of vortex lines identical to $\Gamma_0$ could be favorable, however this is when the repulsion between close vortices of same degree, not yet counted in $F_\ep $, starts to matter,  prevents more than one line from arising at $H_{c_1}^0$, at least  if the nondegeneracy Condition  \ref{nondegencond} holds. When $h_{\ex}$ is further increased, there is (as in two dimensions) a competition between the gain in energy due to multiple lines and the cost of the repulsion, which requires further analysis. A main result of our paper is, by  quantifying the energetic cost of  repulsion between lines via an adaptation of the method of \cite{SanSer2}, to show that the vorticity (or number of vortex lines) cannot explode too fast as one passes $H_{c_1}$. Further work to find the optimal increasing number of curves as $h_\ex$ passes increasing threshholds, is carried out in \cite{RomSanSer}.
 
We now finish this formal derivation by referring the reader to the statement of the three dimensional $\ep$-level estimates for the Ginzburg--Landau functional provided in \cite{Rom}, which provides both approximation of the vorticity and essentially sharp energy lower bounds in terms of these approximations; see Theorem \ref{theorem:epslevel} in Section \ref{sec4}. More precisely, the energy lower bound in this theorem  contains an (unavoidable) error of size $\log \lep$ which limits the precision of the results in the formal derivation outlined above. Improving on this error requires knowing an a priori bound on the vorticity and following a different route for proving lower bounds, which explains why we will place for simplicity further assumptions on the domain in Theorem \ref{teo:Hc1} below.

\subsection{Main result on Ginzburg--Landau}
Throughout the paper, we assume that $H_\ex\in L^2_{\loc} (\RR^3, \RR^3)$ is such that $\diver H_\ex=0$ in $\RR^3$. In particular, we deduce that there exists a vector potential $A_\ex \in H^1_{\loc}(\RR^3,\RR^3)$ such that 
$$\curl A_\ex= H_\ex\quad \text{and} \quad \diver A_\ex=0 \ \text{in} \ \RR^3.$$
The natural space for the minimization of $GL_\ep$ in 3D is 
$H^1(\Omega, \C) \times [A_\ex+ H_{\curl}]$ where 
$$H_{\curl}:= \{ A \in H^1_{\loc} (\RR^3, \RR^3) | \curl A \in L^2(\RR^3,\RR^3)\},$$ see \cite{Rom2}.


Our next result provides a detailed characterization of the vorticity of configurations whose Ginzburg--Landau energy is bounded above by the energy of the approximation of the Meissner solution, for applied fields whose strength is close to $H_{c_1}$. It shows that they have essentially a bounded number of vortex lines, all very close to $\Gamma_0$ in a quantified way as $\ep \to 0$. 

\begin{theorem}\label{teo:boundedvorticity}
Assume that Condition \ref{nondegencond} holds. Then, for any $K>0$ and $\alpha\in(0,1)$, there exists positive constants $\ep_0,C>0$ depending on $\Omega$, $B_0$, $K$, and $\alpha$ such that the following holds. 

For any $\ep<\ep_0$ and any $h_\ex<H_{c_1}^0+K\log |\log \ep|$, if $(\u,\A)$ is a configuration in $H^1(\Omega,\C)\times [A_\ex+H_{\curl}]$ such that $GL_\ep(\u,\A)\leq h_\ex^2 J_0$,
then, letting $u=e^{-ih_\ex \phi_0}\u$ and $A=\A-h_\ex A_0$, there exists ``good'' Lipschitz curves $\ga_1,\dots,\ga_{N_0}\in X$ and $\tga\in\NN$ such that $N_0\le C$ and 
\begin{enumerate}[leftmargin=*,font=\normalfont]
\item for $1\le i\le N_0$, we have 
$$
\R(\ga_0) - \R(\ga_i)\le C\frac{\log\lep}{\lep};
$$
\item for $1\le i\le N_0$, we have
$$
\|\ga_i-\ga_0\|_*\leq C\(\dfrac{\log\lep}{\lep^\alpha}\)^\frac1N \quad\mathrm{and}\quad \big||\ga_i|-|\ga_0|\big|\leq C\(\dfrac{\log\lep}{\lep^\alpha}\)^\frac1N;
$$
\item $\tga$ is a sum in the sense of currents of curves in $X$ such that
$$
|\tga|\leq C\frac{\log\lep}{\lep^{1-\alpha}};
$$
\item we have 
$$
\left\|\mu(u,A)-2\pi\sum_{i=1}^{N_0} \ga_i-2\pi \tga\right\|_{\(C_T^{0,1}(\Omega,\RR^3)\)^*}\leq C\lep^{-2}.
$$
\end{enumerate}
\end{theorem}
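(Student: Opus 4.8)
The plan is to combine the algebraic energy splitting \eqref{Energy-Splitting} with the $\ep$-quantitative lower bounds of \cite{Rom} (Theorem~\ref{theorem:epslevel}), the maximality of the isoflux ratio, and the nondegeneracy Condition~\ref{nondegencond}, and then to upgrade the resulting soft information to a bounded-vorticity statement by quantifying the mutual repulsion of the vortex lines as in \cite{SanSer2}. Since $h_\ex<H_{c_1}^0+K\log\lep\le C\lep$ and $J_0$ is a fixed constant, the hypothesis reads $GL_\ep(\u,\A)\le C\lep^2$. Feeding this into \eqref{Energy-Splitting}, estimating $r_0$ by $|r_0|\le C\ep h_\ex^2 F_\ep(|u|,0)^{1/2}\le C\ep h_\ex^2 F_\ep(u,A)^{1/2}$ via the diamagnetic inequality, and using a standard bound of the vorticity mass by $\lep^{-1}\int_\Omega|\nabla_A u|^2$, one would first obtain the a priori estimates $F_\ep(u,A)\le C\lep^2$, $|\mu(u,A)|(\Omega)\le C\lep$, and $F_\ep(u,A)\le h_\ex\int_\Omega\mu(u,A)\wedge B_0+o(1)$. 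Applying Theorem~\ref{theorem:epslevel} would then produce a finite superposition $\Gamma\in\NN$ of good (simple, Lipschitz, piecewise affine) curves in $X$ satisfying $\|\mu(u,A)-2\pi\Gamma\|_{(C_T^{0,1}(\Omega,\RR^3))^*}\le C\lep^{-2}$ --- which becomes conclusion~(4) once $\Gamma$ is decomposed below --- the lower bound $F_\ep(u,A)\ge\pi\lep\,|\Gamma|-C\log\lep$, and $|\Gamma|\le C\lep$.

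\textbf{Collective estimate and localization.} Using the $(C_T^{0,1})^*$ bound to replace $\int_\Omega\mu(u,A)\wedge B_0$ by $2\pi\pr{B_0,\Gamma}$ up to $O(\lep^{-2})$, orienting each constituent curve $\ga^{(k)}$ of $\Gamma$ so that $\pr{B_0,\ga^{(k)}}\ge0$ (whence $\pr{B_0,\Gamma}\le\R_0|\Gamma|$ with $|\Gamma|=\sum_k|\ga^{(k)}|$ and no cancellation), and inserting $h_\ex<\lep/(2\R_0)+K\log\lep$ together with $|\Gamma|\le C\lep$ into the inequalities above, one would get the collective deficit bound $\sum_k(\R_0-\R(\ga^{(k)}))|\ga^{(k)}|=\R_0|\Gamma|-\pr{B_0,\Gamma}\le C\log\lep$. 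Condition~\ref{nondegencond} is then the key geometric input: a curve whose ratio is within $\eta$ of $\R_0$ lies within $(\eta/C_0)^{1/N}$ of $\ga_0$ in $\|\cdot\|_*$, while a curve at fixed $\|\cdot\|_*$-distance from $\ga_0$ has ratio bounded away from $\R_0$ by a fixed amount. Hence, after a suitable choice of threshold, $\Gamma$ would split as $\Gamma=\sum_{i=1}^{N_0}\ga_i+\tga$, the $\ga_i\in X$ being the constituent curves close to $\ga_0$ --- and, combining $\|\cdot\|_*$-closeness (which transfers $\pr{B_0,\cdot}$) with the maximality of $\R_0$, having $\big||\ga_i|-|\ga_0|\big|$ small --- and $\tga\in\NN$ a sum of the remaining ``inefficient'' curves, whose total mass $|\tga|$ the deficit bound would force to be small.

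\textbf{Bounding $N_0$ and the vorticity: the crux.} So far nothing bounds $N_0$: near $H_{c_1}^0$ the cost $\pi\lep|\Gamma|$ and the gain $2\pi h_\ex\pr{B_0,\Gamma}$ cancel to leading order, and each near-optimal curve carries at most an $O(\log\lep)$ energy benefit, so $N_0$ such benefits cancelling against nothing is a priori consistent. What is missing is a lower bound for $F_\ep(u,A)$ that sees the \emph{interaction} of the vortex lines --- the three-dimensional analogue of the ball-construction and renormalized-energy lower bounds of \cite{SanSer2}. Adapting that construction to normal $1$-currents supported on Lipschitz curves should yield, schematically, $F_\ep(u,A)\ge\pi\lep\,|\Gamma|+c\sum_{i\ne j}\mathcal I(\ga_i,\ga_j)-C\log\lep$, where $\mathcal I(\ga_i,\ga_j)\ge0$ is comparable to $|\ga_0|$ times the logarithm of the inverse mutual distance of $\ga_i$ and $\ga_j$ (capped at $\lep$). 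Since Condition~\ref{nondegencond} confines the $\ga_i$ to a $\|\cdot\|_*$-neighbourhood of $\ga_0$ whose radius is controlled by the deficit bound, the minimal logarithmic energy of $N_0$ points in such a neighbourhood forces $\sum_{i\ne j}\mathcal I(\ga_i,\ga_j)$ to grow superlinearly in $N_0$; inserting this and the upper bound $F_\ep(u,A)-2\pi h_\ex\pr{B_0,\Gamma}\le|r_0|=o(1)$ into the energy balance, against a budget of $O(N_0\log\lep)$, would force $N_0\le C$ and hence $|\mu(u,A)|(\Omega)$ bounded independently of $\ep$. (Because the error $O(\log\lep)$ in Theorem~\ref{theorem:epslevel} initially only yields a bound of order $\log\lep$ on the vorticity, one then re-runs the lower bound along the improved route available once such an a priori bound is known, as in the discussion preceding Theorem~\ref{teo:Hc1}, which is what eventually produces a genuinely bounded number.)

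\textbf{Quantitative conclusions.} With $|\Gamma|$, hence $N_0$, bounded, one re-examines the energy balance curve by curve: pulling $\ga_i$ out of $\Gamma$ and using $\R(\ga_j)\le\R_0$ for the other curves, the $\log\lep$ errors get divided by $\lep\,|\ga_i|\gtrsim\lep$, which gives conclusion~(1), $\R(\ga_0)-\R(\ga_i)\le C\log\lep/\lep$ (recalling $\R_0=\R(\ga_0)$). Condition~\ref{nondegencond}, through \eqref{cond2}, then upgrades this to $\|\ga_i-\ga_0\|_*\le C(\log\lep/\lep^\alpha)^{1/N}$, and the same input together with the transfer of $\pr{B_0,\cdot}$ and the maximality of $\R_0$ gives $\big||\ga_i|-|\ga_0|\big|\le C(\log\lep/\lep^\alpha)^{1/N}$, i.e.\ conclusion~(2); the free parameter $\alpha\in(0,1)$ enters through the separation scale at which the interaction lower bound degrades, hence through the split between how close the $\ga_i$ can be forced to $\ga_0$ and how much mass is left in $\tga$, the latter yielding $|\tga|\le C\log\lep/\lep^{1-\alpha}$, conclusion~(3). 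The main obstacle is the interaction lower bound used in the crux step: in three dimensions this requires adapting the two-dimensional ball-construction and renormalized-energy machinery of \cite{SanSer2} to $1$-currents supported on Lipschitz curves and controlling every error term at the $\log\lep$ scale of precision; the subsequent bookkeeping (the thresholds, the exponent $\alpha$, and the role of $N$) is a secondary but still delicate difficulty.
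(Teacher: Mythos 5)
Your overall architecture matches the paper's: the energy splitting \eqref{Energy-Splitting}, the $\ep$-level estimates of \cite{Rom}, the a priori bound $F_\ep(u,A)\le C\lep^2$, a deficit bound forcing near-optimality of the ratio for most of the vorticity, Condition \ref{nondegencond} to localize the good curves near $\ga_0$, a good/bad splitting of the constituent curves of $\nu_\ep$, and finally a repulsion-type lower bound quadratic in $N_0$ played off against a linear-in-$N_0$ budget of order $\log\lep$. The bookkeeping you describe at the end (the threshold, the exponent $\alpha$, and how conclusions (1)--(3) fall out) is essentially what the paper does in its Steps 1, 3 and 4.

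The gap is precisely at the step you yourself flag as the crux. You posit a lower bound of the form $F_\ep\ge\pi\lep|\Gamma|+c\sum_{i\ne j}\mathcal I(\ga_i,\ga_j)-C\log\lep$ with pairwise interactions depending on the mutual distances of the vortex curves, and leave it as a schematic claim (``should yield''). This is not established, and it is also not the route the paper takes; extracting pairwise renormalized interactions between $1$-currents in 3D at $\log\lep$ precision is substantially harder than what is needed. The paper's mechanism is collective rather than pairwise: Condition \ref{nondegencond} together with Lemmas \ref{lem:controllength} and \ref{lem:tubular} confines \emph{all} good curves to a tube of radius $C\sqrt\de$ around $\ga_0$; one then slices transversally to $\ga_0$ by the coarea formula, discards a small set of bad slices (those meeting $\{|u|\le 1/2\}$ or $\partial S_{\nu_\ep}$, those with too much energy, those whose disc exits $\Omega$), and on each good slice integrates the single-circle estimate $\frac12\int_{\partial B_{s,t}}e_\ep+\frac12\int_{B_{s,t}}|\curl A|^2\ge c\,d_{s,t}^2/t$ over radii $t\in(\sqrt\de,|\log\de|^{-2})$. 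Since the degree $d_{s,t}$ on such circles equals $N_\good^{\ep,\de}$ up to the contribution of the bad curves (controlled by \eqref{bad}), this yields $\E(u,A)\ge c\,(N_\good^{\ep,\de})^2|\log\de|$ with no information whatsoever on how the individual good curves are distributed inside the tube. Against the upper bound $\E(u,A)\le CN_\good^{\ep,\de}\log\lep$ and the choice $\de\sim(\log\lep/\lep)^{1/N}$, for which $|\log\de|\gtrsim\log\lep$, this forces $N_\good^{\ep,\de}\le C$ in a single pass. Your closing remark about ``re-running the lower bound along the improved route'' of Section \ref{sechc1} is a misattribution: that refinement (Proposition \ref{prop:lowerbound}) serves the $O(1)$ expansion of $H_{c_1}$ in Theorem \ref{teo:Hc1}, not the boundedness of $N_0$; the second pass in the paper's proof of Theorem \ref{teo:boundedvorticity} merely reruns the same inequality with the threshold $\de_1=(\log\lep/\lep^\alpha)^{1/N}$ to extract the quantitative conclusions (2) and (3).
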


The way we proceed to prove this theorem is as follows: we use the energy splitting of \eqref{Energy-Splitting} and energy comparisons to show, as in the formal derivation of Section \ref{secformal}, that when $h_\ex$ is close to $H_{c_1}$, the good vortex curves are almost maximizers of the isoflux problem. The nondegeneracy condition assumed on the isoflux problem allows to show more precisely that all good vortex curves lie within a small tubular neighborhood of the unique maximizer $\Gamma_0$. Once this is obtained, we can surround this tubular neighborhood by annuli which avoid all vortices and on which the degree of $u/|u|$ is known. The excess free energy $F_\ep(u,A)$ carried on these annuli  
can be estimated  via the method of ``lower bounds on annuli" of \cite{SanSer2}, and bounded below by $C N_0^2 \lep$ where $N_0$ is 
the number $N_0$ of these good curves (or total degree around the tubular neighborhood). Since on the other hand the energetic gain of the curves is $O(N_0 \lep)$, the energy comparison shows that $N_0$ must remain bounded if $h_{\ex}\le H_{c_1}^0 + C \log \lep$.

Once  Theorem \ref{teo:boundedvorticity} is obtained, following the same formal 
derivation of Section \ref{secformal}, one can rigorously derive the value of $H_{c_1}$. Deriving it with very good precision requires however some delicate energy estimates on the free energy $F_\ep$. Here, for proof of concept, we will treat the simplest case of a straight $\Gamma_0$ (for instance in the case of a solid of revolution) with a flatness condition at the endpoints. We will consider a more general situation in the forthcoming work \cite{RomSanSer}.

The next theorem expresses that \cite{Rom2}*{Theorem 2} holds in fact up to $H_{c_1}- K_0$.
\begin{theorem}\label{teo:Hc1}
Let us assume that $\ga_0$ is straight, that $\partial\Omega$ is negatively curved or flat in a neighbourhood of the  endpoints of $\ga_0$, and that Condition~\ref{nondegencond} holds.   Then, under the assumptions of Theorem \ref{teo:boundedvorticity},  if 
\begin{equation}\label{hypde}
\exists \de>0, \ \|\ga_0-\ga\|_*\leq \de \Rightarrow\pr{B_0,\ga_0}\geq \pr{B_0,\ga},
\end{equation}
and if $h_{\ex}  \le H_{c_1}-K_0$ for a sufficiently large constant $K_0>0$ independent of $\ep$, then
any minimizer $(\u,\A)$ of $GL_\ep$ is vortex-less in the sense that $\| 1-|\u|\|_{L^\infty(\Omega,\C) }=o(1)$ as $\ep \to 0$.
\end{theorem}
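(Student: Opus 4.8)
The plan is to combine the bounded‑vorticity conclusion of Theorem \ref{teo:boundedvorticity} with a sufficiently sharp lower bound on the free energy $F_\ep(u,A)$ carried by a single straight vortex segment, in order to show that at $h_\ex \le H_{c_1} - K_0$ any configuration with $GL_\ep \le h_\ex^2 J_0$ must in fact be vortex‑less. We argue by contradiction: suppose a minimizer $(\u,\A)$ has a non‑negligible vorticity. Since minimizers satisfy $GL_\ep(\u,\A)\le GL_\ep(e^{ih_\ex\phi_0},h_\ex A_0)=h_\ex^2 J_0$, Theorem \ref{teo:boundedvorticity} applies: after the gauge change $u=e^{-ih_\ex\phi_0}\u$, $A=\A-h_\ex A_0$, the vorticity $\mu(u,A)$ is, up to an error of size $\ep^{-2}$ in $(C_T^{0,1})^*$, equal to $2\pi\sum_{i=1}^{N_0}\ga_i+2\pi\tga$ with $N_0\le C$ bounded, each $\ga_i$ within $O((\log\lep\,/\,\lep^\alpha)^{1/N})$ of $\ga_0$ in the dual norm, and $|\tga|\le C\log\lep\,/\,\lep^{1-\alpha}$. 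In particular the total length of all curves is $\sum_i|\ga_i|+|\tga| = N_0|\ga_0| + o(1)$ as $\ep\to 0$ (using continuity of $|\cdot|$ under the dual‑norm convergence, \eqref{cond2}, together with control on the short curves in $\tga$).

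Next I would insert this into the energy splitting \eqref{Energy-Splitting}:
\begin{equation*}
GL_\ep(\u,\A)=h_\ex^2 J_0+F_\ep(u,A)-h_\ex\int_\Omega\mu(u,A)\wedge B_0+r_0,
\end{equation*}
with $|r_0|\le C\ep h_\ex^2 F_\ep(|u|,0)^{1/2}$, which is lower order. The magnetic term is $-h_\ex\langle B_0,2\pi\sum_i\ga_i\rangle - 2\pi h_\ex\langle B_0,\tga\rangle + O(h_\ex\ep^{-2})$. Using \eqref{hypde} and $\R(\ga_0)=\R_0$, the first piece is bounded above by $2\pi N_0 h_\ex \R_0 |\ga_0|$, i.e. by $2\pi N_0 h_\ex \R_0|\ga_0|$, while $\langle B_0,\tga\rangle \le \R_0|\tga| = o(1)$. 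The crucial input is then a matching \emph{lower} bound on $F_\ep(u,A)$: I would show
\begin{equation*}
F_\ep(u,A)\ \ge\ \pi\lep\big(\textstyle\sum_i|\ga_i|+|\tga|\big) - o(\lep)\ \ge\ \pi N_0|\ga_0|\lep - o(\lep),
\end{equation*}
improving the $O(\log\lep)$ error in Theorem \ref{theorem:epslevel} to $o(\lep)$. This is where the hypotheses "$\ga_0$ straight" and "$\partial\Omega$ negatively curved or flat near the endpoints of $\ga_0$" enter: because all $\ga_i$ lie in a thin tube around the straight segment $\ga_0$ hitting $\partial\Omega$ transversally, one can foliate a neighbourhood by two‑dimensional slices transverse to $\ga_0$, apply the essentially sharp 2D lower bound of \cite{SanSer2}/\cite{SanSerBook} (clearing annuli avoiding all vortices, on which the degree of $u/|u|$ equals the total degree $d_i$), and integrate along $\ga_0$; the curvature condition guarantees the near‑boundary slices are well‑behaved so no logarithmic loss is incurred there, and convexity/Jensen in the per‑slice degrees gives $\sum d_i^2 \ge N_0$ with the length of $\ga_0$ as the integration factor.

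Combining the three estimates, and writing $H_{c_1}^0=\lep/(2\R_0)$, we get
\begin{equation*}
0\ \ge\ GL_\ep(\u,\A)-h_\ex^2 J_0\ \ge\ \pi N_0|\ga_0|\lep - 2\pi N_0 h_\ex\R_0|\ga_0| - o(\lep) = 2\pi N_0\R_0|\ga_0|\big(H_{c_1}^0 - h_\ex\big) - o(\lep).
\end{equation*}
Since $h_\ex\le H_{c_1}-K_0$ and $H_{c_1}=H_{c_1}^0 + O(\log\lep)$ (the refined expansion, which this very argument also pins down), for $K_0$ large enough the right‑hand side is $\ge c\, N_0\lep$ for some $c>0$, forcing $N_0=0$ and $\tga$ negligible. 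The quantitative control on $\mu(u,A)$ in item (4) of Theorem \ref{teo:boundedvorticity} then gives $\|\mu(u,A)\|_{(C_T^{0,1})^*}\le C\ep^{-2}$, and standard clearing‑out / elliptic arguments (as in \cite{Rom}, \cite{SanSerBook}) upgrade this to $\|1-|\u|\|_{L^\infty}=o(1)$. I expect the main obstacle to be the sharp $o(\lep)$ free‑energy lower bound near the endpoints where $\ga_0$ meets $\partial\Omega$: controlling the transverse slices there without a $\log\lep$ loss is exactly what the negative‑curvature/flatness hypothesis is designed to handle, but making the slicing argument rigorous (uniformity of the 2D lower bounds as the slice degenerates, and measurability/additivity of the sliced energies) is the delicate technical heart of the proof.
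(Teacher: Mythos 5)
Your overall strategy is the same as the paper's: argue by contradiction, insert the output of Theorem \ref{teo:boundedvorticity} into the splitting \eqref{Energy-Splitting}, control the magnetic gain of the good curves via \eqref{hypde} and of $\tga$ via its small length, prove a sharp lower bound on $F_\ep(u,A)$ by slicing transversally to the straight $\ga_0$ and applying 2D lower bounds on each slice (this is where the curvature/flatness hypothesis at the endpoints enters, exactly as you say), and finish with a clearing-out argument. However, there is a genuine quantitative gap at the crucial step. You aim to prove $F_\ep(u,A)\ge \pi N_0|\ga_0|\lep - o(\lep)$, describing this as an improvement of the $O(\log\lep)$ error in Theorem \ref{theorem:epslevel}; but $O(\log\lep)$ is already $o(\lep)$, so your target is not an improvement at all, and more importantly it is not strong enough to close the argument. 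At $h_\ex\le H_{c_1}-K_0$ with $K_0$ a \emph{constant}, the net energetic gain from suppressing the vortices is only $2\pi N_0\R_0|\ga_0|\,(H_{c_1}^0-h_\ex)=O(K_0)$, i.e.\ a bounded quantity; any error term that is unbounded in $\ep$ (be it $o(\lep)$ or $O(\log\lep)$) swamps it, and your final assertion that the right-hand side is $\ge c\,N_0\lep$ does not follow from the displayed inequality (that would require $H_{c_1}^0-h_\ex\gtrsim\lep$). What is actually needed, and what the paper proves in Proposition \ref{prop:lowerbound}, is $F_\ep(u,A)\ge \pi N_0|\ga_0|\lep - C_0$ with $C_0$ independent of $\ep$.

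Obtaining that $O(1)$ error is the real technical content and requires more than the slicing-plus-annuli sketch you give: one needs to know the degree on almost every slice with enough precision, which the paper gets by upgrading the vorticity estimate to the flat norm (Lemma \ref{lemma:vorticityflat}), slicing the flat norm via Federer, running the full ball construction on each good slice up to a macroscopic radius (so the per-slice error is a constant, not $\log\lep$), and splitting slices into those with degree exactly $N_0$ and those with excess energy at least $\pi(N_0+\tfrac12)\lep$. Relatedly, your bound $h_\ex\pr{B_0,\tga}=h_\ex\cdot o(1)$ is too coarse once you work at the $O(1)$ level: with $\alpha=\tfrac14$ one has $h_\ex|\tga|\sim \lep^{1/4}\log\lep\to\infty$, so the linear estimate $|\pr{B_0,\tga}|\le C|\tga|$ does not suffice; the paper uses the isoperimetric inequality to get the quadratic bound $|\pr{B_0,\tga}|\le C|\tga|^2$, which makes $h_\ex\pr{B_0,\tga}=o(1)$. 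Finally, note that the stronger lower bound with $\sum_i|\ga_i|$ in place of $N_0|\ga_0|$ that you invoke in passing is precisely what the paper states it cannot yet prove (and defers to \cite{RomSanSer}); it is the reason hypothesis \eqref{hypde} is needed at all.
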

Let us remark that \eqref{hypde} does not necessarily hold under the other assumptions of the theorem (since $\ga_0$ is a nondegenerate maximizer for the ratio, which locally minimizes length). Nevertheless, this assumption is not needed if one can prove that the inequality in Proposition \ref{prop:lowerbound} holds with the term $N_0|\ga_0|$ replaced by $\sum_{i=1}^{N_0}|\ga_i|$, which will be proved in the forthcoming work \cite{RomSanSer}. Also, under the hypotheses of Proposition~\ref{prop:positivity} below, \eqref{hypde} always holds.

This theorem improves the lower bound for $H_{c_1}$ provided in \cite{Rom2}. The upper bound was already sharp up to $O(1)$, since it was proved that if $h_{\ex} \ge H_{c_1} + K_1$ for a sufficiently large $K_1$ independent of $\ep$, then minimizers do have vortices. 
Hence, we now know $H_{c_1}$ up to an error $O(1)$ instead of an error $O(\log \lep)$ in \cite{Rom2} and an error $o(\lep)$ in \cite{BalJerOrlSon2}.

It is worth pointing out that when $\Omega$ is a ball, $\partial\Omega$ is not negatively curved or flat in a neighbourhood of the  endpoints of $\ga_0$, and this theorem thus does not apply. The strategy of proof of Theorem \ref{teo:Hc1} would work in this case if we knew that the good lines in Theorem \ref{teo:boundedvorticity} are located at distance $O\(|\log\ep|^{-\frac12}\)$ from $\ga_0$. The tools to prove this will be provided in \cite{RomSanSer}. Nevertheless, we now state a crucial estimate to prove that in a large class of domains, the axis of revolution achieves the supremum of the ratio $\R$ when $H_{0,\ex}$ is constant and points in the direction of the axis of revolution, which we assume without loss of generality to be $\hat z$.
\begin{proposition}\label{prop:positivity}
Assume $H_{0,\ex}=\hat z$ and that $\Omega$ is a solid of revolution around $\hat z$. Letting $A_0$ be the vector field in the Meissner state \eqref{refA0}, we have
$$
\curlcurl A_0 \cdot \hat \theta \leq 0\quad \mathrm{in}\ \RR^3,\quad \mathrm{where }\ \hat \theta =\left(\dfrac{-y}{\sqrt{x^2+y^2}},\dfrac{x}{\sqrt{x^2+y^2}},0 \right).
$$
In particular,
\begin{equation}\label{posB0}
\curl B_0 \cdot \hat \theta\geq 0\quad \mathrm{in}\ \Omega.
\end{equation}
\end{proposition}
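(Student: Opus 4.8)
The plan is to reduce both inequalities, using the symmetries of the problem, to the single statement that the azimuthal component of $A_0$ is nonnegative, and then to prove the latter by an interior maximum principle on $\RR^3$ minus the $z$-axis.

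\emph{First}, I would exploit the symmetries. Since $\Omega$ is a solid of revolution about $\hat z$ and $H_{0,\ex}=\hat z$, the energy defining the normalized Meissner pair $(\phi_0,A_0)$ is invariant both under all rotations about the $z$-axis and under the transformation obtained by composing a reflection $\sigma$ across a plane containing the $z$-axis with the orientation reversal $(\phi,A,H_{0,\ex})\mapsto(-\phi,-A,-H_{0,\ex})$ — the reflection alone sends $\hat z$ to $-\hat z$, which is why the two must be combined. Both symmetries preserve the Coulomb gauge $\diver A_0=0$ and the normalization conditions on $(\phi_0,A_0)$, so by uniqueness of the Meissner state they fix it; this forces $\phi_0\equiv 0$ and $A_0=a(r,z)\,\hat\theta$ for a scalar function $a$ of the cylindrical coordinates $(r,z)$. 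In this gauge $\nabla\phi_0-A_0=-a\,\hat\theta$ in $\Omega$, while the Hodge decomposition $A_0=\curl B_0+\nabla\phi_0$ gives $\curl B_0=a\,\hat\theta$ in $\Omega$; and the Euler--Lagrange equation of the Meissner state (equivalently \eqref{B}) reads $\curlcurl A_0=(\nabla\phi_0-A_0)\,\indic_\Omega=-a\,\hat\theta\,\indic_\Omega$ in $\RR^3$. Taking the azimuthal component — which vanishes on $\RR^3\setminus\overline\Omega$ and equals $-a$ on $\Omega$ — both desired inequalities, $\curlcurl A_0\cdot\hat\theta\le 0$ in $\RR^3$ and $\curl B_0\cdot\hat\theta\ge 0$ in $\Omega$, become equivalent to $a\ge 0$ in $\Omega$.

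\emph{Then}, I would write the scalar equation for $a$ and run a maximum principle. In the Coulomb gauge $\curlcurl A_0=-\Delta A_0$, and $-\Delta(a\,\hat\theta)=(-\Delta a+r^{-2}a)\,\hat\theta$ where $\Delta$ acts on $a(r,z)$; hence the Euler--Lagrange equation becomes
\[
-\Delta a+\Big(\frac{1}{r^2}+\indic_\Omega\Big)a=0\qquad\text{in }\ \RR^3\setminus\{r=0\}.
\]
The zeroth-order coefficient $c:=r^{-2}+\indic_\Omega$ is strictly positive and belongs to $L^\infty_{\loc}(\RR^3\setminus\{r=0\})$, and $a\in W^{2,p}_{\loc}$ for every $p<\infty$ by Calder\'on--Zygmund estimates (since $\Delta A_0\in L^\infty_{\loc}$). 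Crucially, the equation holds on the \emph{connected} open set $\RR^3\setminus\{r=0\}$, in whose interior $\partial\Omega$ lies entirely, so there is no separate transmission problem across $\partial\Omega$. Recalling the asymptotics of the Meissner state, $a(r,z)\to r/2\ge 0$ as $|x|\to\infty$ and $a=O(r)\to 0$ as $r\to 0$ (smoothness of $A_0=a\,\hat\theta$ as a vector field on the axis). If $\inf a<0$, these asymptotics show the infimum is not approached near the axis nor at infinity, so it is attained at an interior point; the strong minimum principle for strong solutions of $-\Delta a+ca=0$ with $c\ge 0$, $c\in L^\infty_{\loc}$, then forces $a$ to be constant on $\RR^3\setminus\{r=0\}$, and inserting a negative constant into the equation yields $0=c\,a<0$, a contradiction. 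Hence $a\ge 0$ everywhere, which gives the proposition.

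\emph{The hard part} will be the symmetry reduction: it hinges on the precise uniqueness and normalization of the Meissner state established in \cite{Rom2}, and on correctly identifying the reflection symmetry with the orientation reversal built in — in particular on checking that the normalization conditions are preserved by it. By contrast, once the behaviour of $a$ on the $z$-axis and at infinity is in hand (standard facts about the Meissner state, used only to localize the infimum), the positivity of the zeroth-order term makes the maximum principle immediate and, notably, lets us avoid any boundary analysis on $\partial\Omega$.
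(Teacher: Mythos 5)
Your proposal is correct and follows essentially the same route as the paper: both reduce the statement to the nonnegativity of the azimuthal component of $A_0$, derive from the Euler--Lagrange relation $-\Delta A_0=(\nabla\phi_0-A_0)\indic_\Omega$ a scalar elliptic equation for that component with nonnegative zeroth-order coefficient, and conclude by the maximum principle combined with positivity at infinity. The only real difference is that you invest in a full symmetry reduction ($\phi_0\equiv 0$ and $A_0=a(r,z)\,\hat\theta$), whereas the paper gets by with axisymmetry alone (only $\partial_\theta\phi_0=0$ and $\partial_\theta A_0^r=0$ are needed); your extra care with the $W^{2,p}$ strong maximum principle and with the behaviour of $a$ on the axis is welcome but does not change the argument.
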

Let us remark that this condition is crucial to show that the axis of revolution achieves the supremum of the ratio in the case of the ball. Moreover, if $\Omega$ is a solid of revolution around $\hat z$ and $\ga_0$ denotes the portion of the axis of revolution that belongs to $\Omega$ (pointing in the direction of positive $\hat z$), \eqref{posB0} implies that $\ga_0$ is a local maximizer for the ratio if $\partial\Omega$ is negatively curved or flat in a neighborhood of the endpoints of $\ga_0$ (which ensures that $\ga_0$ locally minimizes length). Unless $\curl B_0 \cdot \hat \theta $ is very concentrated near $\partial\Omega$, $\ga_0$ should be a global maximizer for the ratio $\R$. We do not prove this result here, but strongly believe it holds.

\bigskip
\noindent
{\bf Plan of the paper}.
In Section \ref{secisoflux} we consider independently the isoflux problem and prove Theorem \ref{teo:ratio}. In Section \ref{secball} we prove Theorem \ref{cond:ball} and Proposition \ref{prop:positivity}, which can be read independently.
In Section \ref{sec4} we turn to the Ginzburg--Landau functional and prove Theorem \ref{teo:boundedvorticity}.
In Section \ref{sechc1} we prove Theorem \ref{teo:Hc1}.

\noindent
\\
{\bf Acknowledgements:}
C. R. acknowledges funding from the Chilean National Agency for Research and Development (ANID) through FONDECYT Iniciaci\'on grant 11190130. He wishes to thank the support and kind hospitality of the Courant Institute of Mathematical Sciences and the Paris-Est University, where part of this work was done. 
E.S. wishes to thank the kind hospitality of the Max Planck Institute for Mathematics in the Sciences in Leipzig, where part of this work was done.
S. S acknowledges funding from NSF grant DMS-2000205 and the Simons Investigator program. Finally, the authors wish to thank the anonymous referee for numerous useful comments which helped improve this article.

\section{The isoflux problem and proof of Theorem \ref{teo:ratio}}\label{secisoflux}

Smirnov in \cite{Smi} provides a structure theorem for 
the space  $\NN$ of normal $1$-currents with compact support in $\RR^3$.
An important role is played by {\em solenoids}, that is elements of $\NN$ with zero boundary. They may be identified to  vector valued finite measures.

The simplest example of current in $\NN$ is, given  an oriented Lipschitz curve $\ga:[a,b]\to\RR^3$, the current $T_\gamma$ defined by 
$$T_\gamma(\varphi) = \int_a^b \varphi(\gamma(t))\cdot \gamma'(t) \,dt,$$
for any smooth $1$-form $\varphi$. Its boundary is the $0$-current, a measure in this case, 
$$\partial T_\gamma = \delta_{\gamma(b)} - \delta_{\gamma(a)}.$$

Following \cite{Smi}, we denote by $\|T\|$ the measure defined by 
$$\|T\|(E) \colonequals \sup \sum_j |T(E_j)|,$$
where the supremum is taken over all Borel subdivisions of $E$. The number $\mbox{var}(T)$ is simply $\|T\|(\RR^3)$, which is also the mass of the current $T$.

By \cite{Smi}*{Theorem C}, for any $T\in \NN$, there exists $P,Q\in\NN$ such that 
\begin{equation}\label{smiC}T = P+Q, \quad \|T\| = \|P\|+\|Q\|,\quad \partial P = 0.\end{equation}
Moreover, there exists a borel measure $\mu$ on the set of simple oriented lipschitz curves such that 
\begin{equation}\label{smiCbis}Q = \int T_\gamma\,d\mu(\gamma),\quad \|Q\| = \int \|T_\gamma\|\,d\mu(\gamma), \quad \partial Q = \int\(\delta_{e(\gamma)} - \delta_{b(\gamma)}\)\,d\mu(\gamma),\end{equation}
where $e(\gamma)$ and $b(\gamma)$ denote the endpoints of $\gamma$. 


The solenoidal part $P$ which appears in the decomposition \eqref{smiC} may itself be further decomposed. Indeed, according to  \cite{Smi}*{Theorem B}, any solenoid $P$ can be completely decomposed into elementary solenoids. We say that $T$ is an elementary solenoid if there exists a Lipschitz function $f:\RR\to \RR^3$ such that $\mbox{Lip}(f)\leq 1$, $f(\RR)\subseteq \mbox{Supp}(T)$, $\|T\|=1$, and that for every $\varphi\in C_c^\infty(\RR^3)$ the mean
$$
T(\varphi)\colonequals \lim_{s\to +\infty} \frac1{2s}\int_{-s}^s \langle f'(t),\varphi(f(t))\rangle dt
$$
exists. Thus, an elementary solenoid is so to speak an infinite Lipschitz curve averaged by its length, with no loss of mass in the limit. Note that a closed lipschitz curve is a special case, using a parametrization which loops indefinitely around the curve.  The set of elementary solenoids is denoted $\E$. Then, for any  $P\in\NN$ there exists a borel measure $\nu$ on $\E$ such that 
\begin{equation}\label{smiB} P = \int_\E \omega\,d\nu(\omega),\quad \|P\| = \int_\E \|\omega\|\,d\nu(\omega).\end{equation}

%

Smirnov's structure theorem summarized as \eqref{smiC}, \eqref{smiCbis}, \eqref{smiB}  will be crucial to prove Theorem \ref{teo:ratio}. Before providing a proof of this result, we state  helpful lemmas. 

\begin{lemma}\label{cvxity} Let $\mu$ be a measure on $\NN$, and let $T = \int \omega\,d\mu(\omega)$.

Then, if $\var(T) = \int \var(\omega)\,d\mu(\omega)$,  and provided the integrals exist, we have
$$ \R(T) = \frac{\int \R(\omega) \var(\omega)\,d\mu(\omega)}{\int \var(\omega)\,d\mu(\omega)}.$$
\end{lemma}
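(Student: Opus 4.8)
The plan is to unwind the definitions and use the fact that the numerator of $\R$, namely $\ga\mapsto\pr{B_0,\ga}$, is linear in the current, while the denominator $|\ga|=\var(\ga)$ is only sublinear in general but becomes exactly additive under the stated hypothesis. First I would observe that since $T=\int\omega\,d\mu(\omega)$ as currents, applying both sides to the fixed form $B_0$ gives
$$
\pr{B_0,T}=\int \pr{B_0,\omega}\,d\mu(\omega),
$$
which is just the definition of the integral of a current-valued map tested against $B_0$ (legitimate since $B_0\in C^{0,1}_T$ and the integrals are assumed to exist). Next, by definition of the ratio, $\pr{B_0,\omega}=\R(\omega)\var(\omega)$ for each $\omega$, so the numerator becomes $\int\R(\omega)\var(\omega)\,d\mu(\omega)$. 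For the denominator, I would use precisely the hypothesis $\var(T)=\int\var(\omega)\,d\mu(\omega)$, i.e. $|T|=\int|\omega|\,d\mu(\omega)$. Dividing numerator by denominator yields
$$
\R(T)=\frac{\pr{B_0,T}}{|T|}=\frac{\int\R(\omega)\var(\omega)\,d\mu(\omega)}{\int\var(\omega)\,d\mu(\omega)},
$$
which is the claimed identity.

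The only point requiring a little care — and the one I would flag as the main (minor) obstacle — is justifying the interchange $\pr{B_0,\int\omega\,d\mu(\omega)}=\int\pr{B_0,\omega}\,d\mu(\omega)$ at the level of rigor appropriate here. The equality $T=\int\omega\,d\mu(\omega)$ is an identity of currents, meaning $T(\phi)=\int\omega(\phi)\,d\mu(\omega)$ for all smooth compactly supported $1$-forms $\phi$; to evaluate at $B_0$, which is merely $C^{0,1}$ (hence Lipschitz but not smooth) and defined only on $\overline\Omega$, one approximates $B_0$ by smooth forms in a way compatible with the mass bounds, or equivalently notes that both sides extend continuously to $C^{0,1}_T(\Omega)$ by the mass and boundary-mass bounds on the currents in $\NN$ together with the dominated convergence furnished by the finiteness of $\int\var(\omega)\,d\mu(\omega)$. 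This is exactly the regularity matching that makes the $\|\cdot\|_*$ norm the natural one in Condition \ref{nondegencond}, so no new idea is needed. Everything else is a one-line algebraic manipulation, and the hypothesis $\var(T)=\int\var(\omega)\,d\mu(\omega)$ is doing all the real work by promoting the generally-strict inequality $\var(T)\le\int\var(\omega)\,d\mu(\omega)$ (a triangle inequality / lower semicontinuity statement for mass) to an equality — it is precisely the ``no cancellation'' condition that Smirnov's decompositions \eqref{smiC}, \eqref{smiCbis}, \eqref{smiB} are engineered to provide.
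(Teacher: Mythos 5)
Your proposal is correct and matches the paper's argument, which is the same one-line computation: the paper simply notes that $\pr{B_0,\omega}=\R(\omega)\var(\omega)$ for every $\omega\in\NN$ and lets linearity of the pairing together with the mass hypothesis do the rest. Your additional remark about extending the test pairing from smooth forms to $B_0\in C_T^{0,1}$ is a reasonable point of care, but it is not something the paper's proof dwells on.
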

\begin{proof} The result follows immediately from the fact that for any $\omega\in\NN$, we have $\pr{B_0,\omega} = \R(\omega) \var(\omega).$
\end{proof}

\begin{lemma}
There exists a constant $C=C(\Omega)>0$ such that
$$
\sup_{\NN} \R=\sup_{T\in \NN,\var(\partial T)\leq C\var(T)}\R(T).
$$
\end{lemma}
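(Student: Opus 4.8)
The plan is to prove the two inequalities separately. The inequality ``$\geq$'' is immediate, since the right-hand side is a supremum of $\R$ over a subset of $\NN$. So the content is the reverse inequality: a normal current with a very large ratio of boundary mass to mass cannot beat the restricted supremum.

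First I would fix $T\in\NN$ and apply Smirnov's structure theorem in the form \eqref{smiC}--\eqref{smiB}: write $T=P+Q$ with $\partial P=0$ and $\var(T)=\var(P)+\var(Q)$, then $P=\int_\E\omega\,d\nu(\omega)$ with $\var(P)=\int_\E\var(\omega)\,d\nu(\omega)$, and $Q=\int T_\gamma\,d\mu(\gamma)$ over simple oriented Lipschitz curves with $\var(Q)=\int\var(T_\gamma)\,d\mu(\gamma)=\int|\gamma|\,d\mu(\gamma)$ and $\partial Q=\partial T=\int(\delta_{e(\gamma)}-\delta_{b(\gamma)})\,d\mu(\gamma)$. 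Viewing $T$ as $1\cdot P+1\cdot Q$ and applying Lemma~\ref{cvxity} first to this two-term decomposition and then to the decompositions of $P$ and $Q$, one gets that $\R(T)$ is a weighted average, with nonnegative weights of total mass $\var(T)$, of the numbers $\R(\omega)$ ($\omega\in\E$) and $\R(T_\gamma)$ ($\gamma$ a simple Lipschitz curve in $\overline\Omega$). In particular $\R(T)$ does not exceed the supremum of these numbers, so it suffices to bound each $\R(\omega)$ and each $\R(T_\gamma)$ by the restricted supremum. For the elementary solenoids this is immediate: $\partial\omega=0$, so $\omega\in\NN$ with $\var(\partial\omega)=0\le C\,\var(\omega)$ for every $C>0$, hence $\R(\omega)$ is admissible.

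For the curves I would first glue together, in the measure $\mu$, the pieces sharing an interior endpoint — the interior-endpoint contributions must cancel in $\partial Q=\partial T$ because $\partial T$ is carried by $\partial\Omega$, and this re-routing keeps $Q=\int T_\gamma\,d\mu$ and $\var(Q)=\int|\gamma|\,d\mu$ — so that every curve in the decomposition is either a loop or has both endpoints on $\partial\Omega$. A loop $\gamma$ has $\partial T_\gamma=0$ and is admissible for any $C$; a curve $\gamma$ with both endpoints on $\partial\Omega$ has $\var(\partial T_\gamma)\le 2$ and $\var(T_\gamma)=|\gamma|$, hence is admissible as soon as $|\gamma|\ge 2/C$.

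The remaining case — a curve $\gamma$ from $\partial\Omega$ to $\partial\Omega$ with $|\gamma|<2/C$ — is the only real obstacle. Here the two endpoints $a,b\in\partial\Omega$ are within distance $2/C$ and $\gamma\subset\overline\Omega$ has diameter $<2/C$; using that $\partial\Omega$ is $C^1$ and $B_0$ is continuous, the chord $b-a$ is tangent to $\partial\Omega$ at $a$ up to an error controlled by the curvature of $\partial\Omega$, and one obtains $\R(T_\gamma)=\frac1{|\gamma|}\int_\gamma B_0\cdot d\vec s\le |B_0^T(a)|+E(2/C)$, where $B_0^T(a)$ denotes the tangential part of $B_0$ at $a$ and $E(r)\to 0$ as $r\to 0$, depending only on the modulus of continuity of $B_0$ and the geometry of $\partial\Omega$. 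Conversely, for $C$ large one can construct an admissible near-boundary curve $\sigma$ based at $a$, of length exactly $2/C$ and contained in $\overline\Omega$, essentially following the direction $B_0^T(a)/|B_0^T(a)|$, with $\R(T_\sigma)\ge |B_0^T(a)|-E(2/C)$ and $\var(\partial T_\sigma)=2=C\,\var(T_\sigma)$; hence $\R(T_\gamma)\le\R(T_\sigma)+2E(2/C)$, and choosing $C=C(\Omega)$ large enough so that the scale $2/C$ is below the scale at which $B_0$ and $\partial\Omega$ are well approximated by constants and tangent planes absorbs this error. (If moreover $B_0\times\vec\nu=0$ on $\partial\Omega$ — the case relevant to the Ginzburg--Landau application — then $B_0^T(a)=0$ and this step is trivial, since short boundary-to-boundary curves then have ratio $\le E(2/C)$ which is harmless.) Combining the four cases gives $\R(T)\le \sup\{\R(S):\var(\partial S)\le C\,\var(S)\}$ for all $T\in\NN$, which is the claim.
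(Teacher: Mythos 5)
Your skeleton (Smirnov's decomposition plus the averaging Lemma~\ref{cvxity}, with elementary solenoids, loops and long curves handled by observing that they are themselves admissible competitors) is sound and close in spirit to the paper, but the step that carries all the content of the lemma --- the short boundary-to-boundary curves --- has a genuine gap. Your local expansion gives $\R(T_\gamma)\le |B_0^T(a)|+E(2/C)$ together with an admissible competitor $\sigma$ of length $2/C$ with $\R(T_\sigma)\ge |B_0^T(a)|-E(2/C)$, and hence only the inequality $\sup_\NN\R\le \sup_{\var(\partial T)\le C\var(T)}\R(T)+2E(2/C)$. The statement is an exact equality for one fixed $C=C(\Omega)$, and for fixed $C$ the additive error $2E(2/C)$ is a fixed positive number; saying that ``choosing $C$ large absorbs this error'' has no mechanism behind it, since enlarging $C$ changes the restricted class (and hence the statement being proved) rather than removing the error. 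Worse, the per-piece domination you are aiming for need not hold at all in the stated generality: nothing you write excludes a continuous, non-tangential $B_0$ for which $\sup_\NN\R$ is approached only by shorter and shorter chords near one boundary point, while every admissible current has strictly smaller ratio --- exactly the situation your bound cannot close. The missing ingredient is the isoperimetric-type estimate invoked by the paper (\cite{Rom2}*{Remark 4.1}): curves of length below a fixed threshold have ratio at most $\tfrac12\R_0$. This is what makes short curves negligible, and it is also what your parenthetical tangential-case remark silently needs, since ``harmless'' requires knowing that the restricted supremum is at least of order $\R_0$, which in turn follows because near-maximizers of $\R$ over $X$ have length bounded below and are therefore admissible.

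The paper's proof avoids your problematic step altogether: given $T$ with $\R(T)>\R_0/2$, it deletes from the Smirnov decomposition the curves of length less than $C^{-1}$. By Lemma~\ref{cvxity} the deleted part $S$ satisfies $\R(S)\le \R_0/2<\R(T)$, so the remaining current $\tilde T$ has $\R(\tilde T)\ge\R(T)$, and it is admissible because every surviving curve has length at least $C^{-1}$, whence $\var(\partial\tilde T)=2\mu(\tilde X)\le 2C\var(\tilde T)$. No estimate on solenoid ratios and no boundary competitor construction is needed. Two secondary remarks: your ``gluing of interior endpoints'' is both unjustified as written (pairing a continuum of endpoints requires a measurable selection you do not provide) and unnecessary, since Smirnov's decomposition is complete at the level of boundaries, so $\mu$-a.e.\ curve is a loop or has its endpoints in $\mathrm{supp}(\partial T)\subset\partial\Omega$; and your straight competitor $\sigma$ in the tangent direction may exit $\Omega$ when the boundary is not locally convex, so it would anyway have to be replaced by a curve along $\partial\Omega$.
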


\begin{proof} Let $\R_0$ be defined by \eqref{R0} and let $T\in\NN$ be such that $\R(T)>\R_0/2$. 
By Smirnov's structure theorem, $T=P+Q$ with $\|T\|=\|P\|+\|Q\|$ and $\|\partial T\|=\|\partial Q\|$. 

By the isoperimetric inequality (see \cite{Rom2}*{Remark 4.1}), we know that there exists $C=C(\Omega)$ such that if $|\gamma|<C^{-1}$ then
$$
\R(\gamma)\leq \frac12 \R_0.
$$
Let 
$$Q = \int_X T_\gamma\,d\mu(\gamma)$$
be the decomposition of $Q$, let $\tilde X$ be the set of curves longer than $C^{-1}$, and let 
$$
\tilde Q=\int_{\tilde X}\gamma d\mu (\gamma),\quad \tilde T = P+\tilde Q,\quad S = T - \tilde T = 
\int_{X\setminus \tilde X} T_\gamma\,d\mu(\gamma).
$$
Then, using the previous lemma, 
$$\R(T) = \frac{\var(\tilde T)}{\var(T)}\R(\tilde T) + \frac{\var(S)}{\var(T)}\R(S) .$$
Since $\var(T) = \var(\tilde T) + \var(S)$, and since from the previous lemma $\R(S)\le \R_0/2 < \R(T)$, we deduce that 
\begin{equation}\label{ratios}
\R(T)\leq \R(\tilde T).
\end{equation}
But 
$$\var(\partial\tilde T) = \var(\partial\tilde Q) = \int_{\tilde X} \var(\partial T_\gamma)\,d\mu(\gamma).$$
It follows that 
$$\var(\partial\tilde T) = 2\mu(\tilde X)\le 2C \int_{\tilde X}\var(T_\gamma)\,d\mu(\gamma) \le 2C\var(\tilde T).$$
Therefore, in view of \eqref{ratios},  in computing $\R_0$ one may consider only normal currents $\tilde T$ such that 
$\var(\partial\tilde T) \le 2C\var(\tilde T).$
\end{proof}


\begin{proof}[Proof of Theorem \ref{teo:ratio}]
Let us first prove that the supremum of the ratio over $\NN$ is achieved. For that, we let $T_n\in \NN$ be a maximizing sequence. By homogeneity of the ratio and the previous lemma, we can assume without loss of generality that $\|T_n\|=1$ and $\|\partial T_n\|\leq C(\Omega)\|T_n\|=C(\Omega)$. By the compactness of normal currents, we deduce that (up to passing to a subsequence) $T_n\to T\in \NN$ weakly in the sense of measures. Since
$$
\|T\|\leq \liminf_n\|T_n\|=1
$$
and
$$
\langle T,B_0\rangle = \lim_n \langle T_n,B_0\rangle=\lim_n \R(T_n),
$$
we deduce that
$$
\R(T)\geq \lim_n \R(T_n)=\sup_{\NN} \R.
$$

\medskip
Let us now prove the second part of Theorem~\ref{teo:ratio}. For that, we let $T\in \NN$ be such that
$$
\R(T)=\max_\NN \R.
$$
By Smirnov's structure theorem, $T=P+Q$ with $\|T\|=\|P\|+\|Q\|$ and $\|\partial T\|=\|\partial Q\|$. Observe that to prove the result it is enough to show that $P=0$, which implies that $\nu$-almost every curve in the decomposition of $Q$ must be a maximizer for the ratio. From $\|\partial T\|=\|\partial Q\|$ we know that  implies that almost every curve in the decomposition of $Q$ has endpoints on the boundary. Let $\omega$ be an elementary solenoid. We claim that
$$
\R(\omega)\leq \sup_{\mathcal C_{\mathrm{loops}}}\R,
$$
which in turns, by Lemma~\ref{cvxity}  implies that necessarily $P=0$, for otherwise $T$ cannot be a maximizer for the ratio in $\NN$. This proves the second part of Theorem~\ref{teo:ratio}.

To prove the claim we observe that, by definition of an elementary solenoid, there exists a simple Lipschitz curve $\gamma$ parametrized by arclength, such that
$$
\omega=\lim_{s\to +\infty} \frac1{2s}\gamma|_{[-s,s]}.
$$
For any $s>0$ we close $\gamma|_{[-s,s]}$ by a segment joining the endpoints and call the resulting loop $\tilde\gamma_s$. The contribution of the closing segment to $\pr{B_0,\tilde\gamma_s}$ is bounded by a constant depending only on $\Omega$, therefore 
$$\lim_{s\to +\infty}\frac1{2s}\pr{B_0,\tilde\gamma_s} = \lim_{s\to +\infty} \frac1{2s}\pr{B_0,\gamma_s} = \pr{B_0,\omega}.$$
The length of the segment being bounded as well, we have
$$\lim_{s\to +\infty}\frac{|\tilde\gamma_s|}{2s}= \lim_{s\to +\infty}\frac{|\gamma_s|}{2s} = 1 = \var(\omega).$$
It follows that 
$$\R(\omega) = \lim_{s\to +\infty} \R(\tilde\gamma_s/2s)\le \sup_{\mathcal C_{\mathrm{loops}}}\R.$$
\end{proof}

%
%
\begin{remark}\label{remtorus} The following counterexample shows that the supremum of $\R$ may not be achieved by a simple curve if the assumption \eqref{conddd} is dropped.


Consider  a torus $\tau$ compactly contained in $\Omega$. On $\tau$, let $B_0$ be a tangent vector field  with integral curve dense in $\tau$. Outside of the torus, $B_0$ can be smoothly extended so that $|B_0(x)|<1$ in $\Omega\setminus \tau$. Then, by construction, 
$$
\sup_{\NN} \R =1
$$
and is not achieved by a simple Lipschitz curve, but rather by the elementary solenoid associated to the integral curve of $B_0$.
\end{remark}

\section{Non-degeneracy condition in the case of the ball}\label{secball}
We begin this section by providing a proof for Theorem \ref{cond:ball}.
\begin{proof}
In \cite{Rom2}*{Proposition 4.1}, it was proved that, in the case of the ball, the diameter $\ga_0$ is the unique curve in $X$ satisfying \eqref{cond1}. It remains to prove \eqref{cond2} for every $\ga\in X$ with $\pr{B_0,\ga}>0$ (otherwise the result is trivial).

\medskip\noindent 
{\bf Step 1. Dimension reduction.} 

\smallskip\noindent 
Let $(r,\theta,\phi)$ denote a system of spherical coordinates, where $r$ is the Euclidean distance from the origin, $\theta$ is the azimuthal angle, and $\phi$ is the polar angle , with $\hat r, \hat \theta, \hat \phi$ the associated unit vectors. An explicit computation (see for instance \cite{Lon}) shows that
\begin{multline*}
B_0=C(R)\hat z-\frac{3R}{r^2\sinh(R)}\left(\cosh r-\frac{\sinh r}r\right)\cos\phi \, \hat r\\ +\frac{3R}{2r^2\sinh(R)}\left(\frac{1+r^2}r \sinh r -\cosh r\right)\sin \phi \, \hat{\phi},
\end{multline*}
where $C(R)=\dfrac{3}{2R\sinh(R)}\left(\dfrac{1+R^2}R \sinh R-\cosh R\right)$.

Let $\ga\in X$ with $\pr{B_0,\ga}>0$, and choose $\theta_0$ to be some angle such that $(r,\theta_0,\phi)$ belongs to the support of $\ga$, for some $r$ and $\phi$. We also let $(x,y,z)$ denote a system of Cartesian coordinates centered at the origin and such that the plane $\{y=0\}$ coincides with the plane $\{\theta=\theta_0\}$.

Let us define 
$$
B(0,R)^{2\D}\colonequals \{(x,0,z) \in \RR^3 \ | \ x^2+z^2<R^2\}
$$
and
\begin{equation}\label{halfball}
B(0,R)^{2\D,+}\colonequals \{(x,0,z) \in \RR^3 \ | \ x^2+z^2<R^2,\ x\geq 0\}.
\end{equation}
We now project $\ga$ along the azimuthal angle onto $B(0,R)^{2\D,+}$. To do this, we consider the map $q:B(0,R)\subset \RR^3\to B(0,R)^{2\D,+}$ defined by
$$
q(r,\theta,\phi)=(r\sin\phi,0,r\cos\phi),
$$
and let 
$$
\ga_{2\D}\colonequals q \circ \ga.
$$
It is easy to check that
$$
\partial \ga_{2\D}=0\ \mbox{relative to } B(0,R)^{2\D},\quad 
\pr{B_0,\ga_{2\D}}=\pr{B_0,\ga},\quad \mbox{and} \quad |\ga_{2\D}|\leq |\ga|.
$$
We observe that $\ga_0=(\ga_0)_{2\D}$.

Since $\pr{B_0,\ga}>0$, one necessarily has $|\ga_{2\D}|>0$. Let us observe that
\begin{align}
\R(\ga_0)-\R(\ga)&=\R(\ga_0)-\R(\ga_{2\D})+\R(\ga_{2\D})-\R(\ga)\notag\\
&=\R(\ga_0)-\R(\ga_{2\D})+
\frac{\R(\ga_{2\D})}{|\ga|}(|\ga|-|\ga_{2\D}|).\label{reduc1}
\end{align}

The isoperimetric inequality allows one to show that if $|\ga_{2\D}|\ll 1$ then the ratio $\R(\ga_{2\D})$ is small; see \cite{Rom2}*{Remark 4.1}. Also note that, if $\R(\ga_{2\D})\ll1$, from \eqref{reduc1} we deduce that
$$
\R(\ga_0)-\R(\ga)\geq \R(\ga_0)-\R(\ga_{2\D})\geq \frac12\R(\ga_0). 
$$
We can therefore assume from now on that $\R(\ga_{2\D})\geq c$ and $|\ga_{2\D}|\geq c$ for some fixed constant $c>0$. In addition, as we shall show in Step 2, we can assume without loss of generality that $|\ga|$ is bounded above, provided $c_0$ is chosen sufficiently small in \eqref{cond2}. For these reasons, hereafter we assume that 
$$
c_1\leq |\ga_{2\D}| \leq |\ga|\leq c_2
$$
for some constants $c_1,c_2>0$ depending on $B_0$ and $R$ only. 

\medskip 
We claim that there exists $C>0$ (depending only on $R$) such that
\begin{equation}
\|\ga-\ga_{2\D}\|_*\leq C\sqrt{|\ga|-|\ga_{2\D}|} \label{reduc2}.
\end{equation}
Indeed, let us consider $B\in C_T^{0,1}(\Omega,\RR^3)$ and observe that
$$
\pr{B,\ga}-\pr{B,\ga_{2\D}}=\int \left(B(\ga)-B(\ga_{2\D})\right)\cdot (\ga_r'\hat r+\ga_z'\hat z)+\int B(\ga)\cdot (\ga_\theta'\hat \theta).
$$Here we are using cylindrical coordinates.
By our choice of $\theta_0$ and definition of $\ga_{2\D}$, one can immediately check that
$$
\left|B(\ga)-B(\ga_{2\D})\right|\leq \|B\|_{C_T^{0,1}(\Omega,\RR^3)}(|\ga|-|\ga_{2\D}|),
$$
which implies that
$$
\left|\int \left(B(\ga)-B(\ga_{2\D})\right)\cdot (\ga_r'\hat r+\ga_z'\hat z) \right|\leq \|B\|_{C_T^{0,1}(\Omega,\RR^3)}(|\ga|-|\ga_{2\D}|)|\ga_{2\D}|.
$$
On the other hand,
$$
\left| \int B(\ga)\cdot (\ga_\theta'\hat \theta) \right|\leq \|B\|_{L^\infty(\Omega,\RR^3)}\sqrt{|\ga|-|\ga_{2\D}|}.
$$
Combining the two, we obtain
$$
\|\ga-\ga_{2\D}\|_*\leq \left(1+\sqrt{|\ga|-|\ga_{2\D}|}\right)\sqrt{|\ga|-|\ga_{2\D}|}\leq C \sqrt{|\ga|-|\ga_{2\D}|},
$$
and the claim is thus proved. 

\medskip
By inserting \eqref{reduc2} and
$$
\|\ga-\ga_0\|_*\leq \|\ga-\ga_{2\D}\|_*+\|\ga_{2\D}-\ga_0\|_*
$$ 
into \eqref{reduc1}, we deduce that proving \eqref{cond2} with $N=2$ reduces to showing that
\begin{equation}\label{estGamma2D}
\R(\ga_0)-\R(\ga_{2\D})\geq C_0\min\left(\|\ga_{2\D}-\ga_0\|_*^2,1\right).
\end{equation}

\noindent\medskip
{\bf Step 2. Boundedness of $|\ga|$ and proof of \eqref{estGamma2D}.} 

\noindent\smallskip
Since $\ga\in X$, one deduces that $\ga_{2\D}$ can be decomposed as
$$
\ga_{2\D}=\ga_s+\sum_{i\in I}\ga_i,
$$
where the sum is understood in the sense of currents, $I$ is a countable set of indices (not containing $0$), $\ga_s\in X$ is a curve contained in  $\overline{B(0,R)^{2\D,+}}$ having two different endpoints on $\partial B(0,R)^{2\D}$, and $\ga_i\in X$ is a loop contained in $\overline{B(0,R)^{2\D,+}}$ for every $i\in I$. Note that $\ga_s=(\ga_s)_{2\D}$ and $\ga_i=(\ga_i)_{2\D}$ for every $i\in I$.

As we shall show in Step 3, we have
\begin{equation}\label{estGammainotloop}
\R(\ga_0)-\R(\ga_s)\geq C\|\ga_0-\ga_s\|_*^2.
\end{equation}
In addition, in Step 4 we will prove that
\begin{equation}\label{estGammailoop}
\R(\ga_0)-\R(\ga_i)\geq C>0
\end{equation}
for every $i\in I$. With these two estimates at hand, let us now prove \eqref{estGamma2D}. We define 
$$
\beta_s=\frac{|\ga_s|}{|\ga_{2\D}|}\quad \mbox{and}\quad \beta_i=\frac{|\ga_i|}{|\ga_{2\D}|}\ \mbox{for every }i\in I.
$$
Observe that $\beta_s +\sum_{i\in I}\beta_i=1$ and
\begin{equation}\label{finalest1}
\R(\ga_0)-\R(\ga_{2\D})=\beta_s\left( \R(\ga_0)-\R(\ga_s)\right) + \sum_{i\in I}\beta_i \left( \R(\ga_0)-\R(\ga_i)\right).
\end{equation}
Inserting \eqref{estGammainotloop} and \eqref{estGammailoop} into \eqref{finalest1}, we obtain
\begin{equation}
\R(\ga_0)-\R(\ga_{2\D})\geq C \beta_s \|\ga_0-\ga_s\|_*^2+C\sum_{i\in I}\beta_i.\label{finalest2}
\end{equation}
Note that the left-hand side of \eqref{finalest2} is bounded above by a constant $c_0>0$ to be fixed (otherwise, the result immediately follows). In particular, we deduce that
$$
\sum_{i\in I}\beta_i=1-\beta_s\leq C^{-1}c_0,
$$
that is $1-C^{-1}c_0\leq \beta_s$. On the other hand, by \cite{Rom2}*{Proposition 4.1} we know that $\pr{B_0,\ga_s}\leq \pr{B_0,\ga_0}$, which combined with \eqref{finalest1} lead us to
$$
c_0\geq \beta_s \left( \frac{\pr{B_0,\ga_0}}{|\ga_0|}-\frac{\pr{B_0,\ga_0}}{|\ga_s|}\right)
$$
that is
$$
|\ga_s|\leq \frac{\beta_s \pr{B_0,\ga_0}}{\beta_s\R(\ga_0)-c_0}.
$$
Therefore
$$
|\ga_{2\D}|=\beta_s^{-1}|\ga_s|\leq \frac{\pr{B_0,\ga_0}}{\beta_s\R(\ga_0)-c_0},
$$
which, provided $c_0$ is small enough, yields that $|\ga_{2\D}|$ is bounded by a constant depending only on $B_0$ and $R$. In turn, this implies that $\pr{B_0,\ga_{2\D}}$ is also bounded. Besides,
$$
c_0\geq \R(\ga_0)-\R(\ga_{2\D})\geq \R(\ga_0)-\R(\ga),
$$
that is, recalling that $\pr{B_0,\ga_{2\D}}=\pr{B_0,\ga}$,
$$
|\ga|\leq \frac{\pr{B_0,\ga_{2\D}}}{\R(\ga_0)-c_0}.
$$
We thus obtain the bound on $|\ga|$ claimed in the previous step. 

\medskip
Finally, by noting that 
$$
\|\ga_0-\ga_{2\D}\|_*\leq \|\ga_0-\ga_s\|_*+\sum_{i\in I}\|\ga_i\|_*,
$$
and since by the isoperimetric inequality we have $\|\ga_i\|_*\leq C|\ga_i|^2$ for every $i\in I$, we deduce that
$$
\|\ga_0-\ga_{2\D}\|_*\leq \|\ga_0-\ga_s\|_*+C\sum_{i\in I}\beta_i^2.
$$
Inserting into \eqref{finalest2}, we are led to
$$
\R(\ga_0)-\R(\ga_{2\D})\geq C_0\|\ga_0-\ga_{2\D}\|_*^2,
$$
which concludes the proof of \eqref{estGamma2D}.

\medskip\noindent 
{\bf Step 3. Proof of \eqref{estGammainotloop}.} 

\smallskip\noindent
We define $S_{\ga_s}$ as the subset of $B(0,R)^{2\D,+}$ enclosed by the loop formed by the union of $\ga_s$ and the curve lying on $\partial B(0,R)\cap \partial B(0,R)^{2\D,+}$ which connects the endpoints of $\ga_s$ oriented according to the orientation of $\ga_s$. Since $B_0\times \vec\nu =0$ on $\partial B(0,R)$, the Stokes' theorem yields
\begin{equation*}
\pr{B_0,\ga_s}=\int_{S_{\ga_s}} \curl B_0\cdot \hat y.
\end{equation*}
In particular, we deduce that
$$
\pr{B_0,\ga_s}\leq \|\curl B_0\|_{\infty}\mbox{Area}(S_{\ga_s})\leq C,
$$
where $C$ is a constant that depends on $B_0$ and $R$ only.

\medskip
We define $S\colonequals S_{\ga_0}\setminus S_{\ga_s}$, where we remark that $S_{\ga_0}=B(0,R)^{2\D,+}$, and observe that 
$$
\pr{B_0,\ga_0}-\pr{B_0,\ga_s}=\int_S \curl B_0\cdot \hat y.
$$

\medskip\noindent
{\bf Step 3.1.} We claim that
\begin{equation}\label{estnormcurl} 
\pr{B_0,\ga_0}-\pr{B_0,\ga_s}\geq C_0(R)\|\ga_0-\ga_s\|_*^2,
\end{equation}
where $C_0(R)$ is a constant that depends on $R$ only. 

To prove this, we first point out that an explicit computation gives
\begin{equation*}
\curl B_0\cdot \hat y=\frac{3R}{2\sinh R}\left(\cosh r-\frac{\sinh r}r \right)\frac{\sin \phi}r\geq 0\quad \mathrm{in}\ B(0,R)^{2\D,+}.
\end{equation*}
By noting that, for any $r>0$,
$$
\cosh r-\frac{\sinh r}r\geq \frac{r^2}3
$$
and using Cartesian coordinates, we find
\begin{equation}\label{ineq1}
\pr{B_0,\ga_0}-\pr{B_0,\ga_s}\geq \frac{R}{2\sinh R} \int_S x dx dz.
\end{equation}

On the other hand, we observe that 
$$
\|\ga_0-\ga_s\|_*\leq \int_S dx dz,
$$
which directly follows from the definition of the star-norm and Stokes' theorem.
We define $S_z\colonequals \{x \ | \ (x,z)\in S \}$ and note that
$$
\int_S dxdz=\int_{-R}^R|S_z|dz.
$$
By monotonicity of $x$, we have
\begin{equation}\label{ineq2}
\int_{S_z} x dx \geq \int_0^{|S_z|}xdx=\frac12 |S_z|^2.
\end{equation}
But, from the Cauchy-Schwarz inequality, we find
$$
\sqrt{2R}\left( \int_{-R}^R |S_z|^2dz\right)^\frac12\geq \int_{-R}^R |S_z|dz.
$$
Combining with \eqref{ineq1} and \eqref{ineq2}, we are led to \eqref{estnormcurl}

\medskip\noindent
{\bf Step 3.2.} We now use \eqref{estnormcurl} to prove \eqref{estGammainotloop}. We consider two cases.

\medskip
First, we consider the case $|\ga_s|\geq |\ga_0|$. Observe that
$$
\R(\ga_0)-\R(\ga_s)=\frac{\pr{B_0,\ga_0}-\pr{B_0,\ga_s}}{|\ga_0|}+\frac{\pr{B_0,\ga_s}}{|\ga_0||\ga_s|}\left(|\ga_s|-|\ga_0|\right)\geq \frac{\pr{B_0,\ga_0}-\pr{B_0,\ga_s}}{|\ga_0|},
$$
which combined with \eqref{estnormcurl} gives \eqref{estGammainotloop}.

\medskip
Second, we consider the case $|\ga_s|<|\ga_0|$. Once again, by \cite{Rom2}*{Remark 4.1}, we can assume that $|\ga_s|\geq c>0$ for some constant $c$ depending on $B_0$ only.

For $a,b\in[0,\pi]$ with $a\leq b$, let us define
$$
S_{a,b}\colonequals \{(r,\phi)\ | \ 0\leq r\leq R,\ a\leq \phi \leq b\}.
$$ 
We let $\phi_1$ (resp. $\phi_2$) be the maximum angle (resp. minimum angle) for which $S_\ga \subseteq S_{\phi_1,\phi_2}$ and define the curve 
$$
\ga_{\phi_1,\phi_2}=\partial S_{\phi_1,\phi_2}\setminus \{(R,\phi) \ | \ \phi_1<\phi<\phi_2\}
$$
oriented from $(R,\phi_2)$ to $(R,\phi_1)$. We also let $L_{\phi_1,\phi_2}$ be the straight segment connecting $(R,\phi_2)$ to $(R,\phi_1)$. This is depicted in Figure \ref{figure1}. We note that, since $|\ga_s|<|\ga_0|$, $\phi_1+\phi_2>0$.

\begin{figure}
\centering
\includegraphics[]{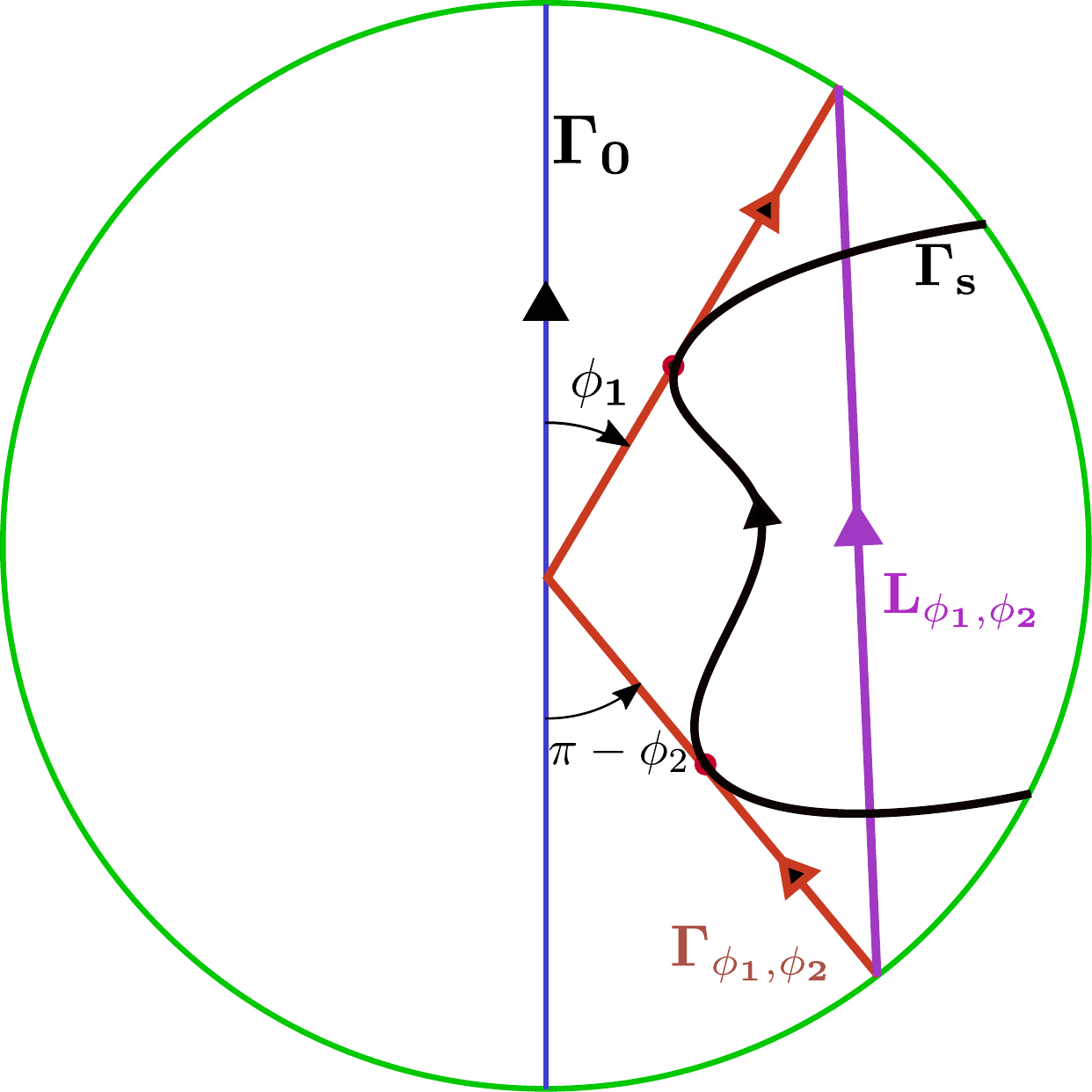}
\caption{Depiction of $\ga_{\phi_1,\phi_2}$, $L_{\phi_1,\phi_2}$, $\ga_0$ and $\ga_s$. }
\label{figure1}
\end{figure}

Let us recall from \cite{Rom2}*{Section 4} that $\pr{B_0,\ga_{\phi_1,\phi_2}}\geq \pr{B_0,\ga}$, $|\ga|\geq  |L_{\phi_1,\phi_2}|$, and
\begin{equation}\label{413}
\R(\ga_s)\leq \frac{\pr{B_0,\ga_{\phi_1,\phi_2}}}{|L_{\phi_1, \phi_2} |}=\cos\left(\frac{\phi_1-(\pi-\phi_2)}2\right) \R(\ga_0)=\sin\left(\frac{\phi_1+\phi_2}2\right) \R(\ga_0)\leq \R(\ga_0).
\end{equation}
In particular, 
\begin{equation*}
\alpha\geq \left(1-\sin\left(\frac{\phi_1+\phi_2}2\right)\right) \R(\ga_0),
\end{equation*}
where hereafter we denote $\alpha\colonequals \R(\ga_0)-\R(\ga_s)$. Observe that from \eqref{413}
\begin{multline}\label{alpha1}
\alpha=\R(\ga_0)-\frac{\pr{B_0,\ga_{\phi_1,\phi_2}}}{|\ga_s|} +\frac{\pr{B_0,\ga_{\phi_1,\phi_2}}}{|\ga_s|}-\frac{\pr{B_0,\ga_s}}{|\ga_s|}\\
\geq 
\frac{\pr{B_0,\ga_{\phi_1,\phi_2}}}{|\ga_s||L_{\phi_1,\phi_2}|}\left(|\ga_s|-|L_{\phi_1,\phi_2}| \right)+
\frac1{|\ga_s|}\left(\pr{B_0,\ga_{\phi_1,\phi_2}}-\pr{B_0,\ga_s}\right).
\end{multline}
We define $S_0$ as the triangle enclosed by $\ga_{\phi_1,\phi_2}$ and $L_{\phi_1,\phi_2}$.

\medskip\noindent
{\bf Step 3.2.1. Reduction to the case $\ga_s\subset S_0$.} 

\smallskip\noindent
If $\ga_s\not\subset S_0$, we claim that there exists a curve $\tilde \ga_s \subset S_0$, whose endpoints are $(R,\phi_1)$ and $(R,\phi_2)$, such that $|\tilde \ga_s|<|\ga_s|$ and $S_{\ga_s}\subset S_{\tilde \ga_s}$. To prove this, let us first observe that from the definition of $\phi_1,\phi_2$, we know that $\ga_s$ intersects the rays $\mathcal R_1=\{(r,\phi_1) \ | \ 0\leq r\leq R\}$ and $\mathcal R_2= \{(r,\phi_2) \ | \ 0\leq r\leq R\}$. We let $r_1$ (resp. $r_2$) denote the maximum radius for which $\ga_s$ intersects $\mathcal R_1$ (resp. $\mathcal R_2$). We define $\hat \ga_s$ as the curve that is obtained by replacing the pieces of $\ga_s$ that connect $(r_1,\phi_1)$ and $(r_2,\phi_2)$ to its endpoints by the pieces of the rays $\mathcal R_1$ and $\mathcal R_2$ that connect $(r_1,\phi_1)$ to $(R,\phi_1)$ and $(r_2,\phi_2)$ to $(R,\phi_2)$, oriented accordingly to the orientation of $\ga_s$. This is depicted in Figure \ref{figure2}. One immediately sees that $|\hat \ga_s|\leq |\ga_s|$ and that $S_{\ga_s}\subseteq S_{\hat\ga_s}$.

We now define $\tilde \ga_s$ as the curve that one obtains by joining the pieces of $\hat \ga_s$ that are contained in $S_0$ (we note that at least its endpoints belong to $S_0$) with the pieces of $L_{\phi_1,\phi_2}$ which are not contained in $S_{\ga_s}$, preserving the orientation of $\ga_s$. It is easy to see that $|\tilde \ga_s|<|\hat \ga_s|$ and $S_{\hat \ga_s}\subset S_{\tilde \ga_s}$, and therefore $\tilde \ga_s$ is as claimed.

\begin{figure}
\centering
\includegraphics[]{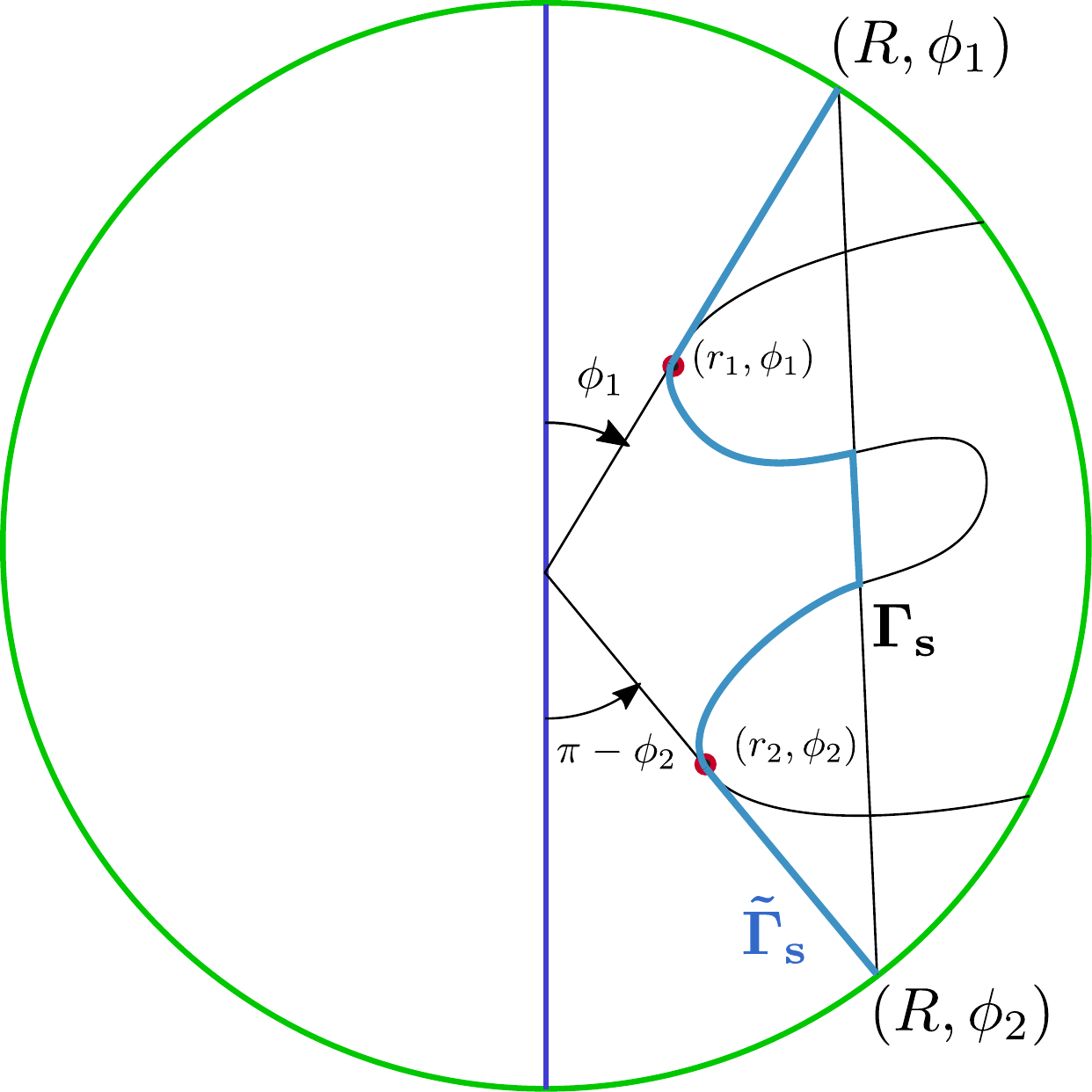}
\caption{Depiction of $\ga_s$ and $\tilde \ga_s$.}
\label{figure2}
\end{figure}

Let us now note that
$$
\alpha=\R(\ga_0)-\R(\tilde \ga_s)+\R(\tilde \ga_s)-\R(\ga_s)\geq \R(\ga_0)-\R(\tilde \ga_s)+\frac{\langle B_0,\tilde\ga_s\rangle-\pr{B_0,\ga_s}}{|\ga_s|}.
$$
Letting $\tilde S\colonequals S_{\ga_0}\setminus S_{\tilde \ga_s}$, we have 
$$
\langle B_0,\tilde\ga_s\rangle -\pr{B_0,\ga_s}=\int_{S}\curl B_0\cdot \hat y-\int_{\tilde S}\curl B_0\cdot \hat y=\int_{ S\setminus \tilde S} \curl B_0\cdot \hat y.
$$
Arguing as in  Step 3.1, we are led to
$$
\langle B_0,\tilde\ga_s \rangle -\pr{B_0,\ga_s}\geq C\|\tilde \ga_s -\ga_s\|_*^2.
$$
Hence, if one can show that
$$
\R(\ga_0)-\R(\ga)\geq C\|\ga_0-\ga\|_*^2
$$
for any curve $\ga\in S_0$, by combining this inequality with $\ga=\tilde \ga_s$ with the previous inequalities, one finds
$$
\R(\ga_0)-\R(\ga_s)\geq C\|\ga_0-\tilde \ga_s\|_*^2+C\|\tilde \ga_s- \ga_s\|_*^2\geq C\|\ga_0-\ga_s\|_*^2.
$$

\medskip\noindent
{\bf Step 3.2.2.}

\smallskip\noindent
Let us now consider the case $\ga_s\subset S_0$ and prove \eqref{estGammainotloop}.
We will consider two cases. First, given a constant $c>1$ that will be fixed afterwards, let us assume that 
\begin{equation}\label{case1}
|\ga_s|-|L_{\phi_1,\phi_2}|\geq\frac1c(|\ga_0|-|L_{\phi_1,\phi_2}|).
\end{equation}
We will prove \eqref{estGammainotloop} using the first term on the right-hand side of \eqref{alpha1}.
A straightforward geometric calculation shows that there exists a constant $C>0$ depending on $R$ such that
$$
|\ga_0|-|L_{\phi_1,\phi_2}|\geq C|S_{\ga_0}\setminus S_{L_{\phi_1,\phi_2}}|^2,
$$
and since
$$
|S_{\ga_0}\setminus S_{L_{\phi_1,\phi_2}}|\geq  \|\ga_0-L_{\phi_1,\phi_2}\|_*,
$$
we deduce that
$$
|\ga_0|-|L_{\phi_1,\phi_2}|\geq C\|\ga_0-L_{\phi_1,\phi_2}\|_*^2.
$$
By observing that, since $\ga_s\subset S_0$, we have
$$
\|\ga_0-L_{\phi_1,\phi_2}\|_*\geq \|\ga_0-\ga_s\|_*,
$$
combining with \eqref{case1} and \eqref{alpha1} and since we assumed $|\Gamma_s|< |\Gamma_0|$, we find
$$
\alpha \geq C (|\ga_s|-|L_{\phi_1,\phi_2}|)\geq C(|\ga_0|-|L_{\phi_1,\phi_2}|)\geq \|\ga_0-L_{\phi_1,\phi_2}\|_*^2\geq \|\ga_0-\ga_s\|_*^2.
$$

We now consider the case 
\begin{equation}\label{case2}
|\ga_s|-|L_{\phi_1,\phi_2}|<\frac1c(|\ga_0|-|L_{\phi_1,\phi_2}|).
\end{equation}
We will prove \eqref{estGammainotloop} using the second term on the right-hand side of \eqref{alpha1} and \eqref{estnormcurl}.
In this case, $\ga_s$ is ``very close'' to $L_{\phi_1,\phi_2}$, which can be quantified in the following way using the solution of Dido's isoperimetric problem (see \cite{Dido} for a classical survey on the isoperimetric inequality). Since $\ga_s$ and $L_{\phi_1,\phi_2}$ have the same endpoints, applying this inequality we obtain that, for some universal constant $C>0$, we have 
$$
|S_{\ga_s}\setminus S_{L_{\phi_1,\phi_2}}|\leq C( |\ga_s|-|L_{\phi_1,\phi_2}|)^2.
$$
Combining with \eqref{case2}, we deduce that
$$
|S_{\ga_{\phi_1,\phi_2}}\setminus S_{\ga_s}|=\eta|S_0|,
$$ where $\eta=\eta(c)\in (0,1)$ is very close to $1$ provided $c>1$ is large enough.
Therefore, a straightforward computation shows that
$$
\pr{B_0,\ga_{\phi_1,\phi_2}}-\pr{B_0,\ga_s}\geq \frac1{10}(\pr{B_0,\ga_0}-\pr{B_0,\ga_s}),
$$
choosing a fixed but large enough constant $c>1$.
Combining with \eqref{alpha1} and \eqref{estnormcurl}, we find
$$
\alpha\geq C (\pr{B_0,\ga_0}-\pr{B_0,\ga_s})\geq C \|\ga_0-\ga_s\|_*^2.
$$
This concludes the proof of \eqref{estGammainotloop}.

\medskip\noindent
{\bf Step 4. Proof of \eqref{estGammailoop}.} 

\smallskip\noindent
Let $\ga\in X$ be a loop contained in $\overline{B(0,R)^{2\D,+}}$  (defined in \eqref{halfball}) and let $S$ denote the surface enclosed by $\ga$. By Stokes' theorem, we have 
$$
\pr{B_0,\ga}=\int_S\curl B_0\cdot \hat y,
$$
and the isoperimetric inequality therefore implies that if $|\ga|$ is small then $\pr{B_0,\ga}$ is small. For this reason, we assume from now on that $|\ga|>c>0$ for a fixed constant $c>0$.

Now, observe that
\begin{equation}\label{ga1}
\R(\ga_0)-\R(\ga)=\frac{\pr{B_0,\ga_0}-\pr{B_0,\ga}}{|\ga_0|}+\frac{\pr{B_0,\ga}}{|\ga_0||\ga|}\left(|\ga|-|\ga_0|\right).
\end{equation}
By the isoperimetric inequality we deduce that if $\frac1C\leq \left||\ga|-|\ga_0|\right|\leq C$, for some constant $C>1$ close enough to $1$, we have 
$$
\pr{B_0,\ga}\leq \frac{9}{10} \pr{B_0,\ga_0},
$$
or, in words, if the length of the loop is close enough to that of $\ga_0$, then the area enclosed by it is far (enough) from the area of the semicircle (which can of course be made quantitative, but is not really necessary for our purpose). Therefore, by making $C>1$ smaller if necessary (depending on $B_0$ and $R$ only), we deduce from \eqref{ga1} that \eqref{estGammailoop} holds in this case. Moreover, from \eqref{ga1} we immediately see that if $|\ga|-|\ga_0|>C$, then \eqref{estGammailoop} holds. Therefore, from now on we assume that $c<|\ga|<|\ga_0|-c$ for some fixed constant $c>0$ depending on $R$ and $B_0$ only. 

Since $\curl B_0\cdot \hat y\geq 0$ in $B(0,R)^{2\D,+}$ and it is symmetric with respect to the $x$-axis, we can assume without loss of generality that $\ga$ is convex and symmetric with respect to the $x$-axis. Indeed, if $\ga$ is not convex, its convex envelope is a loop which encloses more area and has smaller length than $\ga$, and therefore has a greater ratio than $\ga$. Also, if $\ga$ is not symmetric with respect to the $x$-axis, we consider the loops $\ga_1$ and $\ga_2$, obtained by intersecting $\ga$ with the sets $\{z\geq 0\}$ and $\{z\leq 0\}$ respectively. Without loss of generality, we assume that
$$
\frac{\pr{B_0,\ga_1}}{|\ga_1|-|\ga_1\cap\{z=0\}|}\geq \frac{\pr{B_0,\ga_2}}{|\ga_2|-|\ga_2\cap\{z=0\}|}.
$$
Then, $\tilde \ga$ defined as
$$
\tilde \ga(x,z)=\ga(x,z) \quad \mbox{for } z\geq 0\quad \mbox{and}\quad \tilde \ga(x,z)=\ga(x,-z)\quad \mbox{for } z<0,
$$
is a loop symmetric with respect to the $x$-axis such that $\R(\tilde\ga)\geq \R(\ga)$.

Moreover, since $\curl B_0\cdot \hat y$ is increasing with respect to $x$ in $B(0,R)^{2\D,+}$, we can translate the loop in the $x$-direction until it touches the boundary of $B(0,R)^{2\D,+}$, which once again increases the ratio. 

Since the loop is symmetric with respect to the $x$-axis, we now consider the symmetric sector $S_{\phi,\pi-\phi}$, defined as in Step 3.2 with $\phi_1=\phi\in(0,\pi/2)$ and $\phi_2=\pi-\phi$. Since $|\ga|<|\ga_0|-c$, we deduce that $\phi>\phi_0$, where $\phi_0\in(0,\pi/2)$ is a fixed angle depending on $c$. Moreover, by making $\phi_0$ smaller if necessary, we can assume that $\phi<\pi/2-\phi_0$, because otherwise $\pr{B_0,\ga}\ll 1$ with $|\ga|>c>0$, and therefore \eqref{estGammailoop} holds. 

Since $|\ga|\in (c,|\ga_0|-c)$ and $\phi\in(\phi_0,\pi/2-\phi_0)$, where $c,\phi_0>$ depend on $R$ and $B_0$ only, from the isoperimetric inequality we deduce that there exists $x_0\in (0,R)$ depending on $R$ and $B_0$ only, such that $\ga$ is contained in $B(0,R)^{2\D,+}\cap \{x\geq x_0\}$. We recall that
$$
\frac{\pr{B_0,\ga_{\phi,\pi-\phi}}}{|L_{\phi,\pi-\phi}|}=\sin\left(\frac{\phi+(\pi-\phi)}2\right)\R(\ga_0)=\R(\ga_0),
$$
and
$$
|\ga|\geq |L_{\phi,\pi-\phi}|,\quad \pr{B_0,\ga}\leq \pr{B_0,\ga_{\phi,\pi-\phi}}.
$$
But since $\ga$ is contained in $B(0,R)^{2\D,+}\cap \{x\geq x_0\}$, we deduce that
$$
\pr{B_0,\ga}\leq \pr{B_0,\ga_{\phi,\pi-\phi}}-C,
$$
where $C>0$ is a constant that depends on $R$ and $B_0$ only. 

Hence
$$
\R(\ga)\leq \frac{\pr{B_0,\ga_{\phi,\pi-\phi}}-C}{|L_{\phi,\pi-\phi}|}\leq \R(\ga_0)-C.
$$
This concludes the proof of \eqref{estGammailoop} and the theorem is thus proved.
\end{proof}

We next provide an alternative proof of \eqref{estGammailoop}, that we believe can be extended to the case when $\Omega$ is a flat  ellipsoid, i.e. with larger dimensions in the $xy$ plane than in the $z$-direction, or similar solids of revolution.

\begin{proof}{Alternative proof of \eqref{estGammailoop}} 
Let us assume towards a contradiction that 
$$\sup_{\mathcal C_{\mathrm{loops}}}\R = R_0 = \sup_{\NN}\R.$$
Then for any $\ep>0$ there exists a loop $\ga_\ep$ such that $\R(\ga_\ep)> R_0 - \ep.$  We will obtain a contradiction if $\ep$ is small enough. From now on we omit the subscript $\ep$, understanding that $\ga$ depends on $\ep$.

Define $\phi_1$, $\phi_2$ as above and $\theta =  \phi_2 - \phi_1$. The fact that $\R(\ga)$ tends to $R_0$ as $\ep\to 0$ implies that as $\ep\to 0$, $\liminf\theta>0$. Otherwise we would have $\pr{B_0,\ga}\to 0$, hence $|\ga|\to 0$, and then $\R(\ga)\to 0$ using the isoperimetric inequality as in \cite{Rom2}*{Remark 4.1}.

Let $p$ be one of the furthest points to the origin on $\ga$ (in case there is more than one), and $r = |p|$. Then $\ga$ goes from a point $p_1$ on the ray with angle $\phi_1$ to a point $p_2$ on the ray with angle $\phi_2$ (call this section $\ga_a$) and then goes back from $p_2$ to $p_1$ through $p$. This is depicted in Figure \ref{figure3}. Therefore $|\ga|\ge |\ga_a| + |p_1 - p|+|p_2-p|$, and thus 
$$|\ga|\ge |\ga_a| +\sqrt{(r-r_1)^2+2rr_1(1-\cos\alpha)} + \sqrt{(r-r_2)^2+2rr_2(1-\cos(\theta -\alpha))},$$
where $r_i = |p_i|$ and $\alpha$ is the angle between the vectors $\vec{Op}$ and $\vec{Op_2}$, where $O$ denotes the origin.

\begin{figure}
\centering
\includegraphics[]{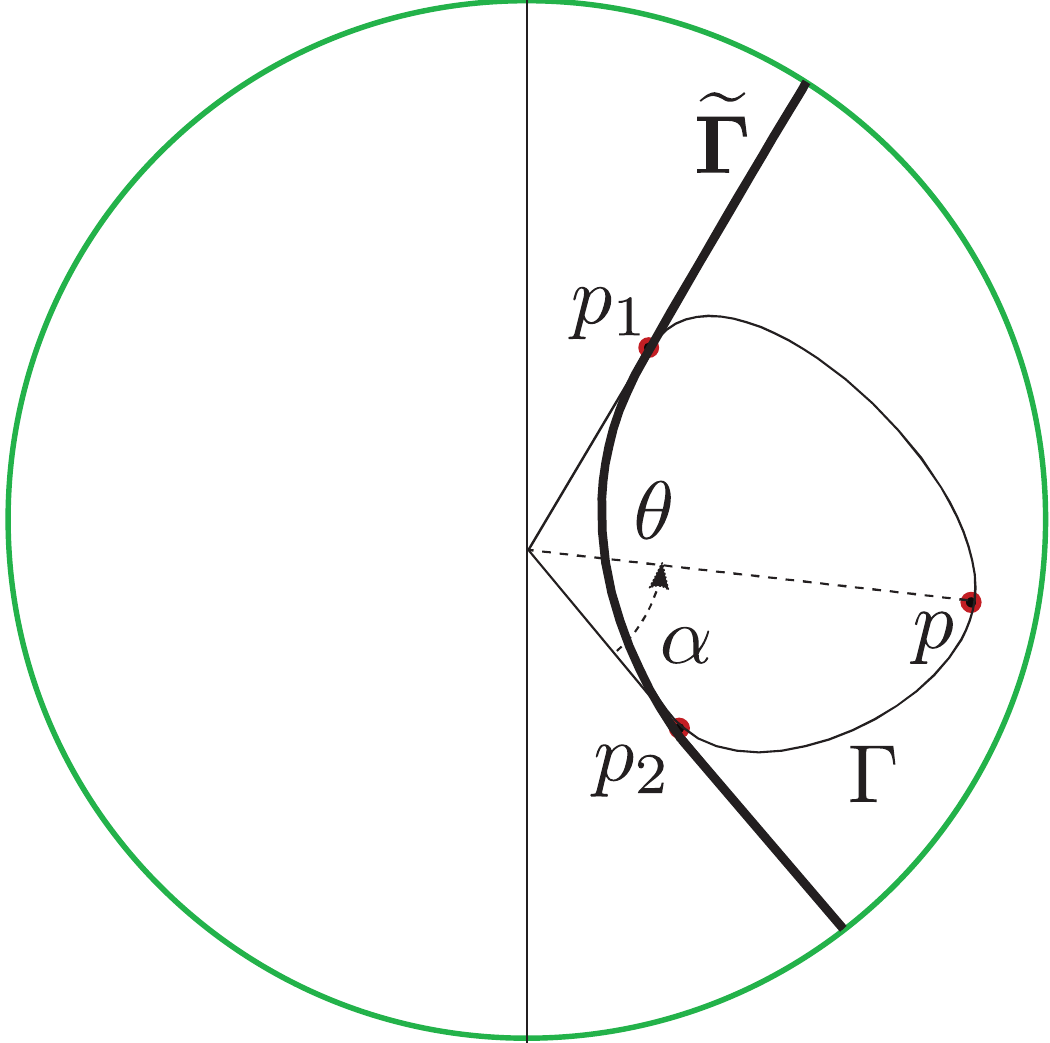}
\caption{Depiction of $\ga$ and $\tilde \ga$.}
\label{figure3}
\end{figure}

We construct a competitor $\tilde\ga$ by replacing the portion $[p_2,p]$ by a straight segment to the boundary on the $\phi_2$-ray, and similarly for the portion $[p,p_1]$. Then
$$|\tilde\ga| = |\ga_a|+ (R-r_1) + (R-r_2).$$
Moreover, $\pr{B_0,\tilde\ga}\ge\pr{B_0,\ga}$, since $\curl B_0\cdot \hat y\ge 0$, so that 
$$R_0 \ge \R(\tilde\ga)\ge\frac{|\ga|}{|\tilde\ga|}R(\ga)\ge \frac{|\ga|}{|\tilde\ga|} (R_0 - \ep).$$
It follows that 
$$|\tilde \ga|\ge |\ga| \(1 - \frac\ep{R_0}\).$$ 

As $\ep\to 0$, since $\liminf\theta>0$, we may assume  by letting $\ep\to 0$ along a well chosen sequence that either $\alpha$ or $\theta - \alpha$ remains bounded away from $0$. Assume w.l.o.g that it is $\alpha$. Then, in view of the lower bound  for $|\ga|$ and the expression of  $|\tilde\ga|$, we deduce that, as $\ep\to 0$ along this sequence, either $\liminf r < R$ or $\liminf r_1 = 0$. If we assumed $\theta - \alpha$ was bounded away from $0$, then $r_2$ would replace $r_1$ in the alternative.

In the first case, we obtain a contradiction by shifting $\ga$ to the right by a positive amount independent of $\ep$, which increases the ratio by a positive amount as well, since $\curl B_0\cdot \hat y$ is increasing w.r.t. the $x$ variable. This contradicts the fact that $\R(\ga)$ tends to $R_0$ as $\ep\to 0$.

Therefore we may assume, going to a further subsequence, that $r_1\to 0$ as $\ep\to 0$ and $r\to R$. This implies that $\liminf |\ga|\ge 2R=|\ga_0|$.Taking the diameter as a competitor, this implies that   $\liminf\pr{B_0,\ga}\ge \pr{B_0,\ga_0}$, which implies that the area enclosed by $\ga$ tends to that of the half-disk. Since $\ga$ is a closed curve, this implies using  the isoperimetric inequality that $\liminf|\ga|\ge \sqrt 2\pi R$. This contradicts the near optimality of $\ga$ as $\ep\to 0$ since $2\pi R > |\ga_0|$.
\end{proof}

We end this section by providing a proof for Proposition \ref{prop:positivity}.
\begin{proof}
We begin by defining 
$$
\dot H^1(\RR^3,\RR^3)=\{A\in L^6(\RR^3,\RR^3)\ | \ \nabla A \in L^2(\RR^3,\RR^3)\}
$$
and the subspace
$$
\dot H^1_{\diver=0}\colonequals \{A\in \dot H^1(\RR^3,\RR^3) \ | \ \diver A=0\ \mathrm{in}\ \RR^3\},
$$
in which one has 
\begin{equation*}
\|A\|_{\dot H^1_{\diver=0}}\colonequals \|\nabla A\|_{L^2(\RR^3,\RR^3)}= \|\curl A\|_{L^2(\RR^3,\RR^3)}.
\end{equation*}
Letting $A_{0,\ex}=(-y/2,x/2,0)$, we recall from \cite{Rom2}*{Section 3} that $A_0$ is the unique minimizer of the functional 
$$
J(A)=\int_\Omega |\curl B_A|^2+\int_{\RR^3}|\curl A-H_{0,\ex}|^2
$$
in the space $\left(A_{0,\ex}+\dot H^1_{\diver=0},\|\, \cdot \, \|_{\dot H^1_{\diver=0}}\right)$, which satisfies the Euler--Lagrange equation
\begin{equation}\label{relBH}
\curl H_0+\curl B_0\indic_\Omega=0\quad \mathrm{in}\ \RR^3.
\end{equation}
Here we used the fact that $\curl H_{0,\ex}=0$ and the notation $\curl A_0\equalscolon H_0$. In particular, by elliptic regularity, one easily deduces that $A_0\to A_{0,\ex}$ uniformly at infinity.

Since $A_0=\curl B_0+\nabla \phi_0$ in $\Omega$, we can rewrite the previous equation as 
$$
-\Delta A_0=(\nabla \phi_0-A_0)\indic_\Omega \quad \mathrm{in}\ \RR^3.
$$




From now on we assume that $\Omega$ is axisymmetric with respect to $\hat z$. Let $(r,\theta,z)$ be cylindrical coordinates in $\RR^3$ and $\Omega$ be the set of points whose $z$ coordinate belongs to some open interval, and whose $r$ coordinate satisfies $r<r_\Omega(z)$. 

The uniqueness of $A_0$ implies that it is axisymmetric as well. In particular $\phi_0$ is independent of $\theta$. If we write $A_0$ as a one form $A_0 = A_0^\theta\,d\theta+A_0^r\,dr+A_0^z\,dz$, then the components are functions independent of $\theta$. We may then compute the exterior differential, whose $ dr\wedge d\theta$ component is $\partial_r A_0^\theta$, so that 
$$\curl A_0\cdot\hat z = \frac1 r\partial_r A_0^\theta,\quad \partial_\theta A_0^\theta = \partial_\theta \phi_0 = 0.$$

Then, we compute $\Delta(A_0\cdot\hat\theta)=\hat\theta\cdot \Delta A_0 + \curl A_0\cdot\hat z$ to find that 
$$ - \Delta A_0^\theta + A_0^\theta\indic_\Omega = \frac1r\partial_r A_0^\theta.$$
Assuming the minimum of $A_0^\theta$ is achieved at $p$, we find that $A_0^\theta(p)$ cannot be negative. Since $A_0^\theta$ is positive at infinity, we deduce that $A_0^\theta$ is nonnegative in $\RR^3$.

Since $\curl H_0 = (\nab\phi_0 - A_0)\indic_\Omega$, and recalling that  $\partial_\theta \phi_0 = 0$, we deduce that 
$$ \curl H_0\cdot\hat\theta\le 0\quad \mathrm{in} \ \RR^3.$$
Finally, using \eqref{relBH}, we obtain
$$ \curl B_0\cdot\hat\theta\ge 0\quad \mathrm{in}\ \Omega.$$

%
%

\end{proof}

\section{Bounded vorticity}\label{sec4}
In the rest of the paper $C>0$ denotes a positive constant which may change from line to line, independent of $\ep$ but which may depend on $\Omega$ and $B_0$, hence on $\Gamma_0$ as well. 
\begin{lemma}\label{lem:controllength} 
Given $\ga\in X$, by letting 
$$
\al=\R(\ga_0)-\R(\ga) \ge 0 ,
$$
we have
$$
\left||\ga|-|\ga_0|\right|\leq \frac1{\R(\ga_0)}(\alpha|\ga|+  \|B_0\|_{C_T^{0,1}(\Omega,\RR^3)} \|\ga-\ga_0\|_*).
$$
\end{lemma}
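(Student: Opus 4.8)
The quantity to control is $\big||\ga|-|\ga_0|\big|$, and the natural idea is to compare the two defining ratios $\R(\ga_0)=\pr{B_0,\ga_0}/|\ga_0|$ and $\R(\ga)=\pr{B_0,\ga}/|\ga|$, expressing everything in terms of the flux $\pr{B_0,\ga}$ and the length $|\ga|$ and then solving for the length difference. First I would write $\pr{B_0,\ga}=\R(\ga)|\ga|=(\R(\ga_0)-\al)|\ga|$, so that
\begin{equation*}
\R(\ga_0)\big(|\ga|-|\ga_0|\big)=\R(\ga_0)|\ga|-\pr{B_0,\ga_0}=\pr{B_0,\ga}+\al|\ga|-\pr{B_0,\ga_0}.
\end{equation*}
Thus $\R(\ga_0)\big(|\ga|-|\ga_0|\big)=\big(\pr{B_0,\ga}-\pr{B_0,\ga_0}\big)+\al|\ga|$, and it remains only to bound the flux difference $|\pr{B_0,\ga}-\pr{B_0,\ga_0}|=|\pr{B_0,\ga-\ga_0}|$.

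The second step is to estimate $|\pr{B_0,\ga-\ga_0}|$ by the dual norm: by the very definition of $\|\cdot\|_*$ as the dual norm of $C_T^{0,1}(\Omega,\RR^3)$, we have for any $B\in C_T^{0,1}(\Omega,\RR^3)$ the bound $\pr{B,\ga-\ga_0}\le \|B\|_{C_T^{0,1}(\Omega,\RR^3)}\,\|\ga-\ga_0\|_*$, and in particular, applying this with $B=B_0$ (which lies in $C_T^{0,1}(\Omega,\RR^3)$ since $B_0\times\vec\nu=0$ on $\partial\Omega$),
\begin{equation*}
\big|\pr{B_0,\ga-\ga_0}\big|\le \|B_0\|_{C_T^{0,1}(\Omega,\RR^3)}\,\|\ga-\ga_0\|_*.
\end{equation*}
Combining the two displays gives
\begin{equation*}
\R(\ga_0)\,\big||\ga|-|\ga_0|\big|\le \al|\ga|+\|B_0\|_{C_T^{0,1}(\Omega,\RR^3)}\,\|\ga-\ga_0\|_*,
\end{equation*}
and dividing by $\R(\ga_0)>0$ yields exactly the claimed inequality.

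There is essentially no obstacle here: the statement is an elementary algebraic manipulation of the two ratios combined with the defining duality inequality for $\|\cdot\|_*$. The only point requiring a word of care is that $B_0$ is an admissible test field for the dual norm, i.e.\ that $B_0\in C_T^{0,1}(\Omega,\RR^3)$ — which holds by the construction of $B_0$ recalled above, since it satisfies $B_0\times\vec\nu=0$ on $\partial\Omega$ and is Lipschitz — and that $\R(\ga_0)>0$, which is clear since $\ga_0$ is the maximizer and the ratio is positive (e.g.\ it dominates $\R$ evaluated at any curve with positive flux). No appeal to Smirnov's decomposition or to the nondegeneracy condition is needed for this particular lemma.
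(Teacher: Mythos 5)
Your proof is correct and follows essentially the same route as the paper's: both arguments rearrange the identity relating $\al$, the flux difference $\pr{B_0,\ga_0}-\pr{B_0,\ga}$, and the length difference, and then bound the flux difference by $\|B_0\|_{C_T^{0,1}(\Omega,\RR^3)}\|\ga-\ga_0\|_*$ using the definition of the dual norm. The algebra is organized slightly differently (you solve for $\R(\ga_0)(|\ga|-|\ga_0|)$ directly, the paper expands $\al$ first), but the content is identical.
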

\begin{proof}
By noting that
$$
\al=\pr{B_0,\ga_0}\left(\frac1{|\ga_0|}-\frac1{|\ga|} \right)+\frac{\pr{B_0,\ga_0}-\pr{B_0,\ga}}{|\ga|}
$$
and 
$$
\left|\pr{B_0,\ga_0}-\pr{B_0,\ga}\right|\leq \|B_0\|_{C_T^{0,1}(\Omega,\RR^3)} \|\ga-\ga_0\|_*,
$$
we get
$$
\left| |\ga|-|\ga_0| \right|\leq \frac{|\ga_0|}{\pr{B_0,\ga_0}}\left(\al|\ga|+ \|B_0\|_{C_T^{0,1}(\Omega,\RR^3)} \|\ga-\ga_0\|_*\right).	
$$
\end{proof}
\begin{lemma}\label{lem:tubular} 
Let $\ga\in X$ not be a loop. There exists $\de_0>0$, depending on $\Gamma_0$  such that, for any $0<\de<\de_0$, if
$$
 \|\ga-\ga_0\|_*\leq \de \quad  \mbox{and} \quad  ||\ga|-|\ga_0||\leq \de,
$$
then $\ga$ belongs to the tubular neighborhood of $\ga_0$ of thickness $C\sqrt\de$, where $C>0$  is universal.
\end{lemma}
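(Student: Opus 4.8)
The plan is to work with the $1$-Lipschitz function $\rho(x):=\dist(x,\ga_0)$, for which the assertion becomes: every $x\in\ga$ satisfies $\rho(x)\le C\sqrt{\de}$. Throughout, $\de_0$ is chosen small (depending on $\ga_0$) so that the metric tube $\{\rho<\de_0\}$ is well behaved around the fixed compact curve $\ga_0$ — which meets $\partial\Omega$ only at its two endpoints — in particular so that a nearest-point projection $\pi$ onto $\ga_0$ is available at this scale. First I would dispose of two reductions, each by exhibiting a concrete admissible test field. \emph{(i) Macroscopic closeness.} If $\dist(\ga,\ga_0)\ge\de_0/2$, let $B$ be a mollification of the unit tangent field of $\ga_0$, cut off to be supported in $\{\rho<\de_0/4\}$; then $B\times\vec\nu=0$ on $\partial\Omega$, $\|B\|_{C_T^{0,1}(\Omega,\RR^3)}\le M(\ga_0)$, $\pr{B,\ga_0}\ge|\ga_0|-o_{\de_0}(1)$ and $\pr{B,\ga}=0$, whence $\|\ga-\ga_0\|_*\ge c_1(\ga_0)>0$; shrinking $\de_0$ below $c_1(\ga_0)$ forces $\ga$ to meet the tube. \emph{(ii) Endpoints.} If $\delta_1:=\dist(b(\ga),\ga_0)>0$, the ball $B(b(\ga),\delta_1/2)$ is disjoint from $\ga_0$; take $B:=c\,\delta_1\,\zeta\,\vec\nu_{\mathrm{ext}}$ with $\zeta$ a $(1/\delta_1)$-Lipschitz cutoff of that ball and $\vec\nu_{\mathrm{ext}}$ a smooth unit extension of the outward normal, so that $\|B\|_{C_T^{0,1}(\Omega,\RR^3)}\le 1$ and $B\times\vec\nu=0$ on $\partial\Omega$. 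Since $\ga$ issues from $b(\ga)$ transversally into $\Omega$ (a routine modification handles the non-transversal case), the initial arc of $\ga$ inside the ball has length $\ge\delta_1/2$ with $B\cdot\ga'\gtrsim\delta_1$ along it; hence $\de\ge\pr{B,\ga}-\pr{B,\ga_0}=\pr{B,\ga}\gtrsim\delta_1^2$, i.e. $\dist(b(\ga),\ga_0)\le C\sqrt{\de}$, and likewise for $e(\ga)$.

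The core is then an isoperimetric dichotomy on the deepest excursion. Let $p\in\ga$ attain $h:=\max_\ga\rho$ and assume $h>4C\sqrt{\de}$ (otherwise we are done); by (ii) the endpoints of $\ga$ lie in $\{\rho<h/4\}$ while $p$ does not, so the connected component $\sigma$ of $\ga\cap\{\rho>h/2\}$ through $p$ has both endpoints $a,b$ on $\{\rho=h/2\}$ and length $|\sigma|\ge h$; let $S$ be a mass-minimizing $2$-current with $\partial S=\ga-\ga_0$. Slicing $S$ by the level sets of $\rho$ and using the coarea formula shows that the filling mass controls $D:=|\pi(a)-\pi(b)|$ from below, namely $\mathrm{Mass}(S)\gtrsim\min(D,h)^2$. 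This produces the alternative:
\begin{itemize}
\item[(a)] \emph{$\ga$ "skips" an arc of $\ga_0$ of length $\gtrsim h$}: if $D\ge h/2$, a mollified-tangent field of $\ga_0$ supported in the $(h/4)$-tube around the skipped arc (of $C_T^{0,1}$-norm $\lesssim 1/h$) pairs with $\ga_0$ to give $\gtrsim D$ and with $\ga$ to give $\approx 0$, so $\|\ga-\ga_0\|_*\gtrsim Dh\gtrsim h^2$;
\item[(b)] \emph{$\ga$ "spikes"}: if $D<h/2$, then the $\ga_0$-arc between $\pi(a)$ and $\pi(b)$ is short while $\sigma$ (which replaces it, up to two segments of length $h/2$) is longer than $h$, forcing $|\ga|-|\ga_0|\gtrsim h$ once one also bounds the length $\ga$ may gain by shortcuts elsewhere by $\lesssim\mathrm{Mass}(S)$.
\end{itemize}
In case (a) one gets $h^2\lesssim\|\ga-\ga_0\|_*\le\de$, and in case (b) $h\lesssim|\ga|-|\ga_0|\le\de$; either way $h\le C\sqrt{\de}$, with the dependence on $\ga_0$ confined to the threshold $\de_0$.

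The main obstacle is the dichotomy itself, i.e. the step from "$\ga$ reaches distance $h$ from $\ga_0$" to "$\mathrm{Mass}(S)+(|\ga|-|\ga_0|)\gtrsim h^2$". Making this rigorous requires handling $\ga$ being merely Lipschitz, with a possibly intricate pattern of excursions above and returns below a given level, and carefully bookkeeping — through the coarea analysis of the minimal filling $S$ — the trade-off between length gained on an excursion and length lost on shortcuts of the fixed curve $\ga_0$; this is where the bulk of the work lies, and one must in particular control that the competitor/test-field constructions above only "see" the relevant portion of $\ga$. (We also note that the exponent $\tfrac12$ in $C\sqrt{\de}$ is not optimal — the argument in fact gives a bound of order $\de$ up to a logarithm — but $\sqrt{\de}$ is the clean form used in the sequel.)
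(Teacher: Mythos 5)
Your write-up is, by your own admission, a sketch whose central step (the excursion dichotomy ``$\mathrm{Mass}(S)+(|\ga|-|\ga_0|)\gtrsim h^2$'') is not proved, and as set up it has genuine gaps that I do not see how to close along this route. Concretely: (1) nothing in the hypotheses controls $\mathrm{Mass}(S)$ from above --- the assumption $\|\ga-\ga_0\|_*\le\de$ is a bound in the dual of $C^{0,1}_T$, which does \emph{not} dominate flat/filling-type quantities for $1$-currents (the paper explicitly remarks that the flat norm is the stronger one), so the case (b) bookkeeping ``shortcuts elsewhere are $\lesssim\mathrm{Mass}(S)$'' yields nothing usable; (2) even granting that, case (b) requires ruling out that $\ga$ compensates the extra length $\gtrsim h$ spent on the spike by being shorter than $\ga_0$ elsewhere, and the only way you indicate to control this is confinement of $\ga$ near $\ga_0$ --- which is precisely the statement being proved; (3) in case (a), ``pairs with $\ga$ to give $\approx 0$'' is unjustified: only the excursion $\sigma$ is known to stay at distance $>h/2$ from $\ga_0$, and other portions of $\ga$ may perfectly well run through the $(h/4)$-tube around the ``skipped'' arc, destroying the lower bound $\|\ga-\ga_0\|_*\gtrsim Dh$; (4) the endpoint reduction (ii) is also shaky, since $\ga$ may leave $b(\ga)$ nearly tangentially to $\partial\Omega$, in which case the circulation of your normal-directed field along the initial arc can be arbitrarily small and the bound $\pr{B,\ga}\gtrsim\delta_1^2$ fails.

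For comparison, the paper's proof is a few lines and uses both hypotheses exactly once each, with no filling currents, slicing, or endpoint analysis. One tests with a single field $B$ supported in the tube $\mathcal T_{\sqrt\de}$ of $\ga_0$, equal on $\ga_0$ to $\sqrt\de$ times its unit tangent and cut off so that $\|B\|_\infty\le\sqrt\de$ and the Lipschitz norm is $\le 1$; then $\pr{B,\ga_0}=\sqrt\de\,|\ga_0|$ while $|\pr{B,\ga}|\le\sqrt\de\,|\ga\cap\mathcal T_{\sqrt\de}|$, so $\|\ga-\ga_0\|_*\le\de$ forces $|\ga\cap\mathcal T_{\sqrt\de}|\ge(1-\tfrac{2}{|\ga_0|}\sqrt\de)|\ga|$. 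Combined with $|\ga|\le|\ga_0|+\de$, the length of $\ga$ outside the tube is $O(\sqrt\de)$; since $\ga$ is a single connected Lipschitz curve meeting the tube, every point of $\ga$ is joined to $\mathcal T_{\sqrt\de}$ by a subarc lying outside the tube, hence lies within $C\sqrt\de$ of $\ga_0$. I would recommend abandoning the dichotomy and arguing this way; note also that the exponent $\tfrac12$ is not an artifact to be improved away lightly --- the length hypothesis only bounds the mass outside the tube, and converting mass into distance via connectedness naturally produces $\sqrt\de$.
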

\begin{proof}
For any $\eta>0$, let $\mathcal T_\eta$ denote the tubular neigborhood of $\Gamma_0$ of thickness $\eta$. We claim that 
\begin{equation}\label{Tgeta}
|\Gamma \cap \mathcal T_{\sqrt \delta}  |\ge (1- \frac{2}{|\Gamma_0|} \sqrt \delta) |\Gamma|.\end{equation} 
Indeed, if not,  we would be able to construct a vector field  $B\in C_0^{0,1}(\Omega,\RR^3)$ supported   in $\mathcal T_{\sqrt\delta} $  with $\|B\|_\infty\le  \sqrt \de$ and $\|B\|_{C_0^{0,1}(\Omega,\RR^3)}\leq 1$, such that $\pr{B,\ga_0}=   \sqrt \delta |\Gamma_0| $. This way we would have $$|\pr{B,\ga  }| \le \sqrt \delta | \Gamma \cap \mathcal T_{\sqrt \delta}  |\le \sqrt \delta  ( 1- \frac2{|\Gamma_0|} \sqrt \delta)  |\Gamma| \le \sqrt \delta |\Gamma_0|- \frac2{|\Gamma_0|}  \delta |\Gamma_0|+ O(\delta^{\frac32}) $$
and thus if $\delta $ is small enough, by definition of $\|\cdot \|_*$ we would deduce
$$\|\Gamma_0- \Gamma\|_* > \delta ,$$ a contradiction. Thus, we deduce from \eqref{Tgeta} that
$$|\Gamma \cap (\mathcal T_{\sqrt \delta} )^c |\le \frac2{|\Gamma_0|} \sqrt \delta |\Gamma|\le  \frac{2}{|\Gamma_0|} \sqrt \delta ( |\Gamma_0|+ \delta) $$ which implies, if $\delta_0<|\Gamma_0|$ that 
$\mathrm{dist} \, ( \Gamma, \mathcal T_{\sqrt \delta}) \le 4 \sqrt \delta $, which yields the result.
\end{proof}

\subsection{\texorpdfstring{$\ep$}{Epsilon}-level estimates}
We now recall the $\ep$-level estimates provided in \cite{Rom}.
\begin{theoremm}\label{theorem:epslevel} For any $m,n,M>0$ there exist $C,\ep_0>0$ depending only on $m,n,M,$ and $\partial\Omega$, such that, for any $\ep<\ep_0$, if $(u_\ep,A_\ep)\in H^1(\Omega,\C)\times H^1(\Omega,\RR^3)$ is a configuration such that $F_\ep(u_\ep,A_\ep)\leq M|\log\ep|^m$ then there exists a polyhedral $1$-dimensional current $\nu_\ep$ such that
\begin{enumerate}[leftmargin=*,font=\normalfont]
\item $\nu_\ep /2\pi$ is integer multiplicity.
\item $\partial \nu_\ep=0$ relative to $\Omega$,
\item $\mathrm{supp}(\nu_\ep)\subset S_{\nu_\ep}\subset \overline \Omega$ with $|S_{\nu_\ep}|\leq C|\log\ep|^{-q}$, where $q(m,n)\colonequals\frac32 (m+n)$,
\item 
\begin{multline*}
\int_{S_{\nu_\ep}}|\nabla_{A_\ep} u_\ep|^2+\frac{1}{2\ep^2}(1-|u_\ep|^2)^2
\geq  |\nu_\ep|(\Omega)\left(\log \frac1\ep-C \log \log \frac1\ep\right)-\frac{C}{|\log\ep|^n},
\end{multline*}
\item and for any $\gamma\in(0,1]$ there exists a constant $C_\gamma$ depending only on $\gamma$ and $\partial\Omega$, such that 
\begin{equation}\label{EstimateJ0}
\|\mu(u_\ep,A_\ep) -\nu_\ep\|_{C_T^{0,\gamma}(\Omega)^*}\leq C_\gamma \frac{F_\ep(u_\ep,A_\ep)+1}{|\log \ep|^{q\gamma}}.
\end{equation}
\end{enumerate}
\end{theoremm}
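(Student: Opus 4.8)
\medskip
\noindent
\emph{Idea of the proof (the complete argument is carried out in \cite{Rom}).}
The plan is to adapt to three dimensions the two-dimensional vortex ball construction of Sandier and Jerrard together with the Jacobian estimates of Jerrard--Soner, in the $\ep$-quantitative form needed here, in four moves: (i) restrict $(u_\ep,A_\ep)$ to the $2$-skeleton of a carefully chosen cubic grid and run the $2$D ball construction on each face; (ii) patch the resulting vortex balls across the $3$-cells into a single polyhedral $1$-current $\nu_\ep$ with no interior boundary, which yields items (1)--(3); (iii) recover the energy lower bound of item (4) by slicing a thin tube around $\nu_\ep$ transversally and invoking the $2$D annulus lower bounds; (iv) deduce the dual-norm estimate \eqref{EstimateJ0} from the $2$D Jacobian estimates applied face by face.

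First I would fix a mesh $\ell\simeq|\log\ep|^{-p}$ with $p=p(m,n)$ to be adjusted at the end, and average $F_\ep(u_\ep,A_\ep)$ over translations of the cubic grid of mesh $\ell$ in order to select one grid $\mathcal G$ such that the energy carried by its $2$-skeleton is at most $C\ell^{-1}F_\ep$ and $u_\ep$ does not vanish on its $1$-skeleton. On each open face $\omega$ of $\mathcal G$, $u_\ep|_\omega$ is then a complex map on a square of $2$D energy $\le C|\log\ep|^{m+p}$, and the refined $2$D ball construction produces finitely many disjoint closed balls in $\omega$ covering $\{|u_\ep|\le 1/2\}\cap\omega$, of total radius $\le r\colonequals|\log\ep|^{-q}$, with degrees $d_i^\omega=\deg(u_\ep/|u_\ep|,\partial B_i^\omega)$, each carrying $2$D energy at least $2\pi|d_i^\omega|\,(\log\tfrac{r}{\ep}-C)$. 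Since $u_\ep\neq 0$ on the $1$-skeleton, on any $3$-cube of $\mathcal G$ the algebraic sum of the face-degrees telescopes to zero, each edge being shared by two faces with opposite orientations; one may therefore join the vortex balls on the faces of each cube, with the appropriate integer multiplicities, by straight segments, and the union over all cubes is a polyhedral current $\nu_\ep$ with $\nu_\ep/2\pi$ of integer multiplicity and $\partial\nu_\ep=0$ relative to $\Omega$ --- this is (1)--(2). Summing the crude face lower bounds gives $|\nu_\ep|(\Omega)\le CF_\ep/\log\tfrac1\ep$, the factor $\ell$ from the segment lengths cancelling the factor $\ell^{-1}$ from the averaging, so a tubular neighbourhood $S_{\nu_\ep}$ of $\mathrm{supp}(\nu_\ep)$ of suitably small radius satisfies $|S_{\nu_\ep}|\le C|\log\ep|^{-q}$ with $q=\tfrac32(m+n)$, which is (3); the interplay of $p$ and $q$ is fixed at this stage.

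The core is item (4). I would slice $S_{\nu_\ep}$ by a continuous family of planes transverse to each straight segment of $\nu_\ep$; by the coarea formula, $\int_{S_{\nu_\ep}}\bigl(|\nab_{A_\ep}u_\ep|^2+\tfrac1{2\ep^2}(1-|u_\ep|^2)^2\bigr)$ dominates the integral over this family of the $2$D energies of the slices (computed with the gradient tangential to each slice). On a generic slice meeting a given segment, $\nu_\ep$ appears as finitely many points of integer multiplicities whose absolute values sum to at least the multiplicity $|d|$ of that segment, and on a disk of radius comparable to the tube radius $\rho$ (a fixed negative power of $|\log\ep|$) around them the $2$D annulus lower bound of Sandier and Jerrard contributes at least $2\pi|d|\,(\log\tfrac{\rho}{\ep}-C)=2\pi|d|\,(\log\tfrac1\ep-C\log\log\tfrac1\ep)$; since $|\nu_\ep|$ has density $2\pi|d|$ along such a segment, integrating along it and summing over all segments reproduces $|\nu_\ep|(\Omega)\,(\log\tfrac1\ep-C\log\log\tfrac1\ep)$, the residual $-C|\log\ep|^{-n}$ absorbing the energy lost in the cubes where the segments are patched and the interaction of segments that come geometrically close. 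Item (5) is the soft part: face by face one compares the $2$D Jacobian of $u_\ep|_\omega$ with $2\pi\sum_i d_i^\omega\delta_{x_i^\omega}$ through the standard $2$D estimate, the error splitting into a diffuse part controlled by $\ep(F_\ep+1)^{1/2}$ and local corrections supported in the vortex balls; each such correction has diameter $\le r=|\log\ep|^{-q}$ and hence contributes at most (its mass)$\,\times r^\gamma$ to the $(C_T^{0,\gamma})^*$ norm, and summing over $\mathcal G$ gives $\|\mu(u_\ep,A_\ep)-\nu_\ep\|_{(C_T^{0,\gamma})^*}\le C_\gamma(F_\ep+1)|\log\ep|^{-q\gamma}$.

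The hard part is to keep the lower bound in item (4) sharp to leading order $|\nu_\ep|(\Omega)\log\tfrac1\ep$ with only a $\log\log\tfrac1\ep$ loss while \emph{simultaneously} driving the additive error below $|\log\ep|^{-n}$: this is what dictates the choice of the mesh $\ell$ and of the ball radius $r$ (hence the explicit value $q=\tfrac32(m+n)$), requires careful accounting of the energy in the cubes where the segments are patched, and needs control on the mutual interaction of nearby vortex segments, none of which follows formally from the $2$D estimates; the remaining verifications are bookkeeping.
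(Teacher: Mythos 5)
This statement is not proved in the paper: it is imported from \cite{Rom} (with the lower bound in item (4) extracted, as the authors note, from within the proof of \cite{Rom}*{Theorem 1.1} in \cite{Rom}*{Section 8}), so there is no in-paper argument to compare against beyond the ingredients recalled in Appendix \ref{appendix}. Your outline is consistent with those ingredients: the averaged choice of grid (Lemma \ref{Lemma:Grid}), the face-wise two-dimensional vorticity estimates (Lemma \ref{cor:2dVortEstimate}), the vanishing of the total face degree on each cube (which the paper derives from $\partial\mu(u,A)=0$ relative to the cube --- equivalent to your edge-cancellation argument), and the definition of $\nu_\ep$ inside each cube as the \emph{minimal connection} between the face vortices; it is this minimal-connection choice, rather than an arbitrary joining by segments, that yields the mass and volume bounds you use for item (3).

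Two points in your sketch are misattributed. First, by \eqref{radiusGrid} the total vortex-ball radius on the $2$-skeleton is of order $\ep\de^{-1}F_\ep$, not $|\log\ep|^{-q}$. Second, and relatedly, the exponent $q\gamma$ in \eqref{EstimateJ0} does not come from the ball radii: it comes from the grid size $\de=|\log\ep|^{-q}$, because replacing $\mu(u,A)$ by the minimal connection displaces the vorticity by up to $\de$ inside each cube, and testing this displacement against a $C^{0,\gamma}$ field costs a factor $\de^{\gamma}$ times the mass (this is exactly the mechanism in \eqref{my} of the appendix). With the mesh itself taken to be $|\log\ep|^{-q}$ both item (3) and item (5) come out with the stated exponent. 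Beyond that, your text has the same logical status as the paper's treatment: the genuinely hard steps you flag --- the sharp constant in item (4) with only a $\log\log\frac1\ep$ loss, the additive error $|\log\ep|^{-n}$, and the disjointness and interaction issues when slicing near the patching cubes --- are precisely the content of \cite{Rom} and are not established by the sketch, so the deferral is appropriate but should be stated as such rather than as a proof.
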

We point out that the lower bound part of the previous theorem is slightly different than the one in \cite{Rom}*{Theorem 1.1}. Nevertheless, this inequality is proved within the proof of the latter theorem in \cite{Rom}*{Section 8}. 

It is also convenient to state the following result, whose proof we postpone to Appendix \ref{appendix}, which shows that the vorticity estimate holds in the flat norm and that will be needed in the following section. 
\begin{lemma}\label{lemma:vorticityflat} Under the hypothesis of Theorem \ref{theorem:epslevel}, it holds that, for any $\alpha\in (0,1)$,
\begin{equation*}
\|\mu(u,A) -\nu_\ep\|_{\F(\Omega_\ep)}\leq C \frac{F_\ep(u,A)+1}{|\log \ep|^{\alpha q}},
\end{equation*}
where 
$$
\Omega_\ep\colonequals \left\{x\in \Omega \ | \ \mathrm{dist}(x,\partial\Omega)> |\log\ep|^{-(q+1)}\right\}.
$$
\end{lemma}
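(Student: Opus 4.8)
The plan is to upgrade the $C_T^{0,\gamma}(\Omega)^*$ estimate of Theorem~\ref{theorem:epslevel}(5) to a flat-norm estimate at the expense of retreating slightly into the interior domain $\Omega_\ep$, where $\mathrm{dist}(x,\partial\Omega)>|\log\ep|^{-(q+1)}$. The key observation is that $\mu(u,A)-\nu_\ep$ is, up to vanishingly small errors, supported away from $\partial\Omega$: the vorticity $\nu_\ep$ is supported on $S_{\nu_\ep}$ with $|S_{\nu_\ep}|\le C|\log\ep|^{-q}$, and the $\ep$-level construction also controls $\mu(u,A)$ near the support of $\nu_\ep$. Thus on $\Omega_\ep$ both currents are, morally, genuine $1$-currents with controlled mass, and the point is to trade decay rate for the stronger norm.

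First I would recall the relation between the two norms for $1$-currents. For a $1$-current $T$ in a set $\Theta$, testing against $\phi\in\mathcal D^1(\Theta)$ with $\max\{\|\phi\|_\infty,\|d\phi\|_\infty\}\le 1$ controls $\|T\|_{\F(\Theta)}$, while $\|T\|_{C_T^{0,\gamma}(\Theta)^*}$ controls testing against $\phi$ with $\|\phi\|_{C^{0,\gamma}}\le 1$. A $1$-form $\phi$ with $\|d\phi\|_\infty\le 1$ need not be $C^{0,\gamma}$-bounded, but one can decompose it, or rather use a duality/mollification argument: given such a test form $\phi$ on $\Omega_\ep$, extend it and mollify at a scale $\rho$ comparable to $|\log\ep|^{-(q+1)}$ to obtain $\phi_\rho$ with $\|\phi_\rho\|_{C^{0,\gamma}}\le C\rho^{-\gamma}$ and $\|\phi-\phi_\rho\|_\infty\le C\rho\|d\phi\|_\infty\le C\rho$. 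Then
$$
(\mu(u,A)-\nu_\ep)(\phi)=(\mu(u,A)-\nu_\ep)(\phi_\rho)+(\mu(u,A)-\nu_\ep)(\phi-\phi_\rho),
$$
and the first term is bounded using \eqref{EstimateJ0} by $C\rho^{-\gamma}(F_\ep+1)|\log\ep|^{-q\gamma}$, while the second term is bounded by $\|\phi-\phi_\rho\|_\infty$ times the mass of $\mu(u,A)-\nu_\ep$ on a neighborhood of $S_{\nu_\ep}$ inside $\Omega_\ep$. Choosing $\gamma$ slightly less than $1$ — precisely $\gamma$ so that for a given $\alpha\in(0,1)$ one has $\gamma(q+1)\le $ the right power when $\rho=|\log\ep|^{-(q+1)}$, and optimizing — yields the exponent $\alpha q$ claimed.

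The main obstacle, and what needs genuine care, is controlling the local mass of $\mu(u,A)$ (and hence of $\mu(u,A)-\nu_\ep$) on the region where $\phi-\phi_\rho$ is supported, i.e.\ showing it is bounded by something like $C(F_\ep+1)$ rather than growing; here one invokes the standard bound $|\mu(u,A)|(\Omega)\le C(F_\ep(u,A)+1)$ coming from the coarea/Jacobian estimates (see \cite{JerSon}), together with the mass bound on $\nu_\ep$ from Theorem~\ref{theorem:epslevel}. A secondary technical point is the mollification near $\partial\Omega_\ep$: since we only work in $\Omega_\ep$ and test forms in $\mathcal D^1(\Omega_\ep)$ vanish near $\partial\Omega_\ep$, the mollification at scale $\rho\sim|\log\ep|^{-(q+1)}$ keeps $\phi_\rho$ supported inside $\Omega$, so \eqref{EstimateJ0} applies directly; one must check the constants only depend on $\partial\Omega$ and the parameters $m,n,M$. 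Assembling these estimates and balancing $\rho$ against $\gamma$ gives the stated inequality, completing the proof. I would defer all of this to Appendix~\ref{appendix} as announced.
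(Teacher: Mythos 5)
Your reduction to the grid/interior region and your appeal to the mass bounds are in the right spirit, but the central step of your argument is not valid: the estimate $\|\phi-\phi_\rho\|_\infty\le C\rho\|d\phi\|_\infty$ is false. Mollification error is controlled by the \emph{full} differential $\|\nabla\phi\|_\infty$ of the vector field, whereas the flat norm only constrains the exterior derivative $d\phi$, i.e.\ the antisymmetric part. A pure gradient $\phi=\nabla f$ with $f(x)=\eta\sin(x_1/\eta)$ has $\|\phi\|_\infty=1$ and $d\phi=0$, yet $\|\phi-\phi_\rho\|_\infty\approx 1$ for any mollification scale $\rho\gg\eta$. This is exactly the obstruction that makes the flat norm genuinely weaker than the H\"older dual norms, and it cannot be removed by mollification alone. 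Even granting the bound, the bookkeeping fails: with $\rho=\lep^{-(q+1)}$ the main term $\rho^{-\gamma}(F_\ep+1)\lep^{-q\gamma}$ equals $(F_\ep+1)\lep^{\gamma}$, which diverges rather than decays.

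The fix is the decomposition you mention in passing but do not pursue. The paper's proof Hodge-decomposes the test field as $X=\curl B_X+\nabla\phi_X$ on a large ball. The curl part is genuinely regularized by elliptic theory: $-\Delta B_X=\curl X$ with $\|\curl X\|_\infty\le 1$ gives $\curl B_X\in C^{0,\alpha}$ with controlled norm, and it is further replaced cube-by-cube by its average (a constant, hence again a gradient of a linear function). The gradient parts $\nabla\phi_X$ and $\nabla f_l$ are then handled not via \eqref{EstimateJ0} but by integration by parts: since $\mu(u,A)$ and $\nu_\ep$ have zero boundary relative to each cube of the grid, only boundary terms on the faces survive, and these are estimated by the two-dimensional vorticity estimates (Lemma \ref{cor:2dVortEstimate}) together with the grid energy bounds and \eqref{radiusGrid}. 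In short, the oscillatory gradient component of the test form --- the part your mollification cannot see --- is killed by the vanishing of $\partial\mu$ and $\partial\nu_\ep$ on cubes, not by any smoothing. You would need to incorporate this structural step for the argument to close.
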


\subsection{Energy-splitting}
The proof of Theorem \ref{teo:boundedvorticity} relies on the following expansion of the energy, which is directly obtained by combining \eqref{Energy-Splitting} with the $\ep$-level estimates in Theorem \ref{theorem:epslevel}.
\begin{proposition}\label{prop:energysplitting}
Let $(\u,\A)\in H^1(\Omega,\C)\times [A_\ex+H_{\curl}]$ be a configuration such that $GL_\ep(\u,\A)\leq M|\log\ep|^m$ for some $m,M>0$, and define $u=e^{-ih_\ex\phi_0}\u$ and $A=\A-h_\ex A_0$, where $(e^{ih_\ex\phi_0},h_\ex A_0)$ is the approximation of the Meissner solution. Then, letting $\nu_\ep=\sum_{i\in I_\ep}2\pi \ga_i^\ep$ be the vorticity approximation of the configuration $(u,A)$ given by Theorem \ref{theorem:epslevel} (with $n$ sufficiently large), we have
\begin{multline}\label{energysplitting}
GL_\ep(\u,\A)\geq h_\ex^2 J_0+ \pi \sum_{i\in I_\ep}|\ga_i^\ep|\left(\log \frac1\ep -C\log \log \frac1\ep \right)\\
-2\pi h_\ex\sum_{i\in I_\ep}\pr{B_0,\ga_i^\ep} +\E(u,A)+o(1),
\end{multline}
where $\E$ denotes the excess energy, defined by
$$
\E(u,A)=\frac12 \int_{\Omega\setminus S_{\nu_\ep}}|\nabla_{A}u|^2+\frac1{2\ep^2}(1-|u|^2)^2+\frac12\int_{\RR^3}|\curl A|^2.
$$
\end{proposition}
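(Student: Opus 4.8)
The plan is to obtain \eqref{energysplitting} by substituting the $\ep$-level estimates of Theorem~\ref{theorem:epslevel} into the algebraic identity \eqref{Energy-Splitting}, so that the proof is mostly a bookkeeping of error terms. The one genuine preliminary is the a priori bound $F_\ep(u,A)\le M'\lep^{m'}$ that Theorem~\ref{theorem:epslevel} requires; I would extract it from $GL_\ep(\u,\A)\le M\lep^m$ by rewriting \eqref{Energy-Splitting} as
\[
F_\ep(u,A)=GL_\ep(\u,\A)-h_\ex^2 J_0+h_\ex\int_\Omega\mu(u,A)\wedge B_0-r_0,
\]
and controlling the magnetic cross term by integration by parts: since $\mu(u,A)=dj(u,A)+dA$ and the tangential part of the $1$-form $B_0$ vanishes on $\partial\Omega$, the boundary contributions cancel and $\int_\Omega\mu(u,A)\wedge B_0=\int_\Omega(j(u,A)+A)\wedge\curl B_0$, whence
\[
\Big|h_\ex\int_\Omega\mu(u,A)\wedge B_0\Big|\le C\,h_\ex\big(\|j(u,A)\|_{L^1(\Omega)}+\|A\|_{L^1(\Omega)}\big).
\]

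Using $|j(u,A)|\le(1+|1-|u|^2|)\,|\nab_A u|$ together with $\int(1-|u|^2)^2\le2\ep^2F_\ep(u,A)$ one gets $\|j(u,A)\|_{L^1(\Omega)}\le C\big(F_\ep(u,A)^{1/2}+\ep F_\ep(u,A)\big)$, and $\|A\|_{L^1(\Omega)}\le CF_\ep(u,A)^{1/2}$ by standard estimates; then, for $\ep$ small and using $h_\ex\le C\lep$ (as holds in the regime of Theorem~\ref{teo:boundedvorticity}), one absorbs the term $Ch_\ex\ep F_\ep(u,A)$ into $\tfrac14F_\ep(u,A)$ and $Ch_\ex F_\ep(u,A)^{1/2}$ into $\tfrac14F_\ep(u,A)+Ch_\ex^2$ by Young. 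Combined with $|r_0|\le C\ep h_\ex^2F_\ep(|u|,0)^{1/2}\le C\ep h_\ex^2F_\ep(u,A)^{1/2}$ (diamagnetic inequality), this yields the a priori bound $F_\ep(u,A)\le C\lep^{m'}$ with $m'=\max(m,2)$.

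With this bound I would then apply Theorem~\ref{theorem:epslevel} to $(u,A)$, with parameters $m'$, $M'$ and a value of $n$ to be fixed below, producing $\nu_\ep=\sum_{i\in I_\ep}2\pi\ga_i^\ep$ and the set $S_{\nu_\ep}$. Splitting $F_\ep(u,A)=\tfrac12\int_{S_{\nu_\ep}}\Big(|\nab_A u|^2+\tfrac1{2\ep^2}(1-|u|^2)^2\Big)+\E(u,A)$ — so that $\E(u,A)$ is exactly the part of $F_\ep(u,A)$ supported outside $S_{\nu_\ep}$ together with $\tfrac12\int_{\RR^3}|\curl A|^2$ — and invoking part~(4) of the theorem with $|\nu_\ep|(\Omega)=2\pi\sum_{i\in I_\ep}|\ga_i^\ep|$, I obtain
\[
F_\ep(u,A)\ge\pi\sum_{i\in I_\ep}|\ga_i^\ep|\left(\log\frac1\ep-C\log\log\frac1\ep\right)+\E(u,A)-\frac{C}{\lep^{n}}.
\]

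For the magnetic gain, the vorticity estimate \eqref{EstimateJ0} with $\gamma=1$ (valid since $B_0\in C_T^{0,1}(\Omega,\RR^3)$) gives $\|\mu(u,A)-\nu_\ep\|_{C_T^{0,1}(\Omega)^*}\le C\lep^{-q}\big(F_\ep(u,A)+1\big)$ with $q=\tfrac32(m'+n)$, so that
\[
\Big|\int_\Omega\mu(u,A)\wedge B_0-2\pi\sum_{i\in I_\ep}\pr{B_0,\ga_i^\ep}\Big|\le C\lep^{\,m'-q}.
\]
Multiplying by $h_\ex\le C\lep$ bounds this by $C\lep^{\,m'+1-q}$, which is $o(1)$ once $n$ is chosen large enough that $q>m'+1$; fixing such an $n$ and plugging the last two displays, together with $|r_0|\le C\ep h_\ex^2F_\ep(u,A)^{1/2}\le C\ep\lep^{\,2+m'/2}=o(1)$, back into \eqref{Energy-Splitting}, all remaining terms ($r_0$, the $C\lep^{-n}$ from part~(4), and the vorticity-approximation error) are $o(1)$, which is exactly \eqref{energysplitting}. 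I do not expect a real obstacle here: the statement is a corollary of Theorem~\ref{theorem:epslevel} and the energy splitting, and the only step carrying genuine content is the a priori bound on $F_\ep(u,A)$; the one point to watch afterwards is that the vorticity-approximation error is amplified by the factor $h_\ex\sim\lep$, so $q$ — equivalently the parameter $n$ in Theorem~\ref{theorem:epslevel} — must be taken large enough to compensate, which is why the statement is phrased with ``$n$ sufficiently large''.
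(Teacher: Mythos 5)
Your architecture is exactly the one the paper intends (it gives no separate proof, asserting the proposition is ``directly obtained'' by combining \eqref{Energy-Splitting} with Theorem~\ref{theorem:epslevel}): derive an a priori polynomial bound on $F_\ep(u,A)$ so that Theorem~\ref{theorem:epslevel} applies, split off $\E(u,A)$, use part (4) with $|\nu_\ep|(\Omega)=2\pi\sum_i|\ga_i^\ep|$, and use \eqref{EstimateJ0} with $\gamma=1$ together with $h_\ex\le C\lep$ and $q=\tfrac32(m'+n)>m'+1$ to make the vorticity-approximation error and $r_0$ into $o(1)$. All of that bookkeeping is correct, including the implicit standing assumption $h_\ex\le C\lep$, which is indeed how the proposition is used.

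The one step that fails as written is the inequality $\|A\|_{L^1(\Omega)}\le C F_\ep(u,A)^{1/2}$ invoked to control the cross term. The free energy only controls $\curl A$ in $L^2(\RR^3)$; it gives no control on $A$ itself, because $A=\A-h_\ex A_0$ carries no gauge normalization: replacing $\A$ by $\A+\nabla\Phi$ and $\u$ by $\u e^{i\Phi}$ leaves $GL_\ep$, $F_\ep$, $\mu(u,A)$ and $r_0$ unchanged while making $\|A\|_{L^1(\Omega)}$ arbitrarily large. The estimate is only true after a gauge choice, e.g.\ the Coulomb gauge $\diver A=0$ in $\RR^3$ with $A\in L^6$, where $\|A\|_{L^6(\RR^3)}\le C\|\curl A\|_{L^2(\RR^3)}\le CF_\ep(u,A)^{1/2}$; and this repair is legitimate here precisely because every term in \eqref{Energy-Splitting}, and in particular $F_\ep(u,A)$, is gauge invariant, so the a priori bound may be derived in any gauge and then used when applying Theorem~\ref{theorem:epslevel} to the original pair $(u,A)$. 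Alternatively, you can avoid estimating the cross term altogether, as the paper does in Step 1 of the proof of Theorem~\ref{teo:boundedvorticity}: inside $\Omega$ one has $(u,A+h_\ex\nabla\phi_0)=(e^{-ih_\ex\phi_0}\u,\A-h_\ex\curl B_0)$, so $|\nabla_Au|\le|\nabla_{\A}\u|+h_\ex|\curl B_0||\u|$, while $\curl A=(\curl\A-H_\ex)+h_\ex(H_{0,\ex}-\curl A_0)$ with $H_{0,\ex}-\curl A_0\in L^2(\RR^3)$; this gives $F_\ep(u,A)\le C\,GL_\ep(\u,\A)+Ch_\ex^2+o(1)\le C\lep^{\max(m,2)}$ with no gauge fixing. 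With either repair, the rest of your argument goes through and the proposition follows as you describe.
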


\subsection{Proof of Theorem \ref{teo:boundedvorticity}}
\begin{proof} We proceed in several steps.

\medskip\noindent
{\bf Step 1 : Energy comparison and deducing information on the curves. }
First we claim that 
\begin{equation}
\label{majoF}
F_\ep(u, A) \le C \lep^2\end{equation} 
for some $C>0$ independent of $\ep$.
Indeed, by assumption, we have
\begin{equation}\label{boundJ0}
GL_\ep(\u,\A)\leq GL_\ep (e^{ih_\ex\phi_0},h_\ex A_0)=h_\ex^2 J_0.
\end{equation}
On the other hand, by gauge invariance, we have
\begin{align*}
F_\ep(u,A)=F_\ep(\u,\A-h_\ex\curl B_0) &\leq 2F_\ep(\u,\A)+2F_\ep(1,h_\ex \curl B_0)\\ &\leq 2F_\ep(\u,\A)+Ch_\ex^2.
\end{align*}
We may then insert this and $h_\ex \le C \lep$  into \eqref{Energy-Splitting} to obtain \eqref{majoF} using the vorticity estimate \eqref{EstimateJ0}.

Next, combining  \eqref{boundJ0} with \eqref{energysplitting} 
we obtain that
\begin{equation}\label{in1}
0\geq \pi \sum_{i\in I_\ep}|\ga_i^\ep|\left(\log \frac1\ep -C\log \log \frac1\ep \right)-2\pi h_\ex \sum_{i\in I_\ep}\pr{B_0,\ga_i^\ep} +\E(u,A)+o(1).
\end{equation}
Let us define, for any $i\in I_\ep$,
$$
\alpha_i^\ep\colonequals \R(\ga_0)-\R(\ga_i^\ep),
$$
and note that $|\alpha_i^\ep|\leq 2\R(\ga_0)$. 

Since we assume that $h_\ex=H_{c_1}^0+O(\log|\log \ep|)=\dfrac1{2\R(\ga_0)}|\log \ep|+O(\log|\log\ep|)$, by definition of $\R$  we have
\begin{multline*}
-h_\ex\sum_{i\in I}\pr{B_0,\ga_i^\ep}=h_\ex \sum_{i\in I_\ep}|\ga_i^\ep| \left(-\R(\ga_0)+ \alpha_i^\ep\right)\\
=-\frac{|\log \ep|}2 \sum_{i\in I_\ep}|\ga_i^\ep|+ H_{c_1}^0\sum_{i\in I_\ep}|\ga_i^\ep|\alpha_i^\ep+O\left(\log|\log \ep|\sum_{i\in I_\ep}|\ga_i^\ep|\right).
\end{multline*}
Plugging this into \eqref{in1}, we are led to
\begin{equation}\label{in2}
O\left(\log|\log \ep|\sum_{i\in I_\ep}|\ga_i^\ep|\right)\geq 2\pi H_{c_1}^0\sum_{i\in I_\ep}|\ga_i^\ep|\alpha_i^\ep+\E(u,A)+o(1).
\end{equation}
Let $0<\de\ll 1$ and define 
$$
I_\good^{\ep,\de} \colonequals \{i\in I_\ep \ | \ \alpha_i^\ep<\de^N \},\quad I_\bad^{\ep,\de} \colonequals \{i\in I_\ep \ | \ \alpha_i^\ep\geq\de^N \}.
$$
By combining Lemma \ref{lem:controllength} and Condition \ref{nondegencond}, we deduce that if $\delta$ is small enough (depending on $\|B_0\|_{\infty}$ and $|\Gamma_0|$),
$$
\frac12|\ga_0|\leq|\ga_i^\ep|\leq 2|\ga_0|\quad \mathrm{for\ any}\ i\in I_\good^{\ep,\de}. 
$$
In particular, we have
$$
\sum_{i\in I_\ep}|\ga_i^\ep|\alpha_i^\ep\geq \frac{|\ga_0|}2 \sum_{i\in I_\good^{\ep,\de}} \alpha_i^\ep +\sum_{i\in I_\bad^{\ep,\de}}|\ga_i^\ep|\de^N.
$$
In addition, by letting $N_\good^{\ep,\de}\colonequals \#(I_\good^{\ep,\de})$, we have
$$
\sum_{i\in I_\ep} |\ga_i^\ep|\leq 2|\ga_0|N_\good^{\ep,\de} +\sum_{i\in I_\bad^{\ep,\de}}|\ga_i^\ep|.
$$
By inserting the last two estimates into \eqref{in2}, we deduce that
\begin{multline}\label{ineqwithde}
\de^N \sum_{i\in I_\bad^{\ep,\de}} |\ga_i^\ep|+\frac{|\ga_0|}2 \sum_{i\in I_\good^{\ep,\de}}\alpha_i^\ep+\frac{\E(u,A)}{|\log\ep|}+o(1)\\
\leq C\frac{\log|\log \ep|}{|\log \ep|}\left(N_\good^{\ep,\de}+\sum_{i\in I_\bad^{\ep,\de}} |\ga_i^\ep|\right).
\end{multline}
Hence, choosing
\begin{equation}\label{choicede1}
\de=\de(\ep)=\left[(C+\eta)\frac{\log|\log\ep|}{|\log\ep|}\right]^\frac1N
\end{equation}
for some $\eta>0$ to be fixed later, we deduce that
\begin{align}
\sum_{i\in I_\good^{\ep,\de}}\alpha_i^\ep&\leq C\frac{\log|\log \ep|}{|\log \ep|} N_\good^{\ep,\de},\notag\\
\sum_{i\in I_\bad^{\ep,\de}}|\ga_i^\ep|&\leq C\eta^{-1}N_\good^{\ep,\de},\label{bad}\\
\E(u,A)&\leq C\log|\log\ep|N_\good^{\ep,\de}\label{excess}.
\end{align}

\medskip\noindent
{\bf Step 2:  Lower bounds on annuli. }
The $\ep$-level lower bound derived from the three dimensional vortex approximation construction introduced in \cite{Rom}, captures well the energy which lies very near the vortices, but misses the repulsion energy between vortices which accumulate locally around the curve $\Gamma_0$. The missing energy can be recovered from the excess energy $\E(u,A)$ by combining a suitable slicing procedure with the method of ``lower bounds on annuli'', introduced in \cite{SanSer2} and re-used in \cites{SanSerBook,SanSer4} in 2D.
We next describe the slicing procedure. 

\medskip
Let us assume $\ga_0:[0,\lga]\to \RR^3$ is a $C^2$ injective open curve parametrized by arclength and define its regular tubular neighborhood $U$ of thickness $c_0>0$ by
$$
U=\{x+y \ | \ x\in \ga_0,\ y\perp \t_0(x),\ |y|<c_0\},
$$
where $\t_0(x)$ denotes the unit tangent vector to $\ga_0$ at $x$ and $c_0$ is sufficiently small so that the normal projection $p:U\to \ga_0([0,\lga])$ is well defined and $C^1$.

We now define the $C^1$ map $\psi\colonequals (\ga_0)^{-1}\circ p$. 
By the coarea formula, letting $U'\colonequals (U\cap\Omega)$, we have
\begin{align}
\E(u,A)&\geq \frac12 \int_{U'\setminus S_{\nu_\ep}}\left(|\nabla_{A}u|^2+\frac1{2\ep^2}(1-|u|^2)^2\right)\frac{|\nabla \psi|}{\|\nab \psi\|_{L^\infty}}+\frac12\int_{U'}|\curl A|^2\frac{|\nabla \psi|}{\|\nab \psi\|_{L^\infty} }\label{circles1}\\
&\geq \frac1{2\|\nab \psi\|_{L^\infty}} \Big(\int_0^{|\Gamma_0|}\int_{(\psi^{-1}(s)\cap U')\setminus S_{\nu_\ep}}|\nabla_{A}u|^2+\frac1{2\ep^2}(1-|u|^2)^2d\H^2ds\notag\\
&\hspace*{7cm}+\int_0^{|\Gamma_0|} \int_{\psi^{-1}(s)\cap U'}|\curl A|^2d\H^2ds\Big).\notag
\end{align}
Let us observe that, by definition of $p$, the level sets of $\psi^{-1}$ are flat. 

\medskip
Hereafter, for any $s\in(0,\lga)$ and $t>0$, we denote by $B_{s,t}$ the two-dimensional ball with center $(\ga_0)(s)$ and radius $t$ contained in $\psi^{-1}(s)$. We now use the method of ``lower bounds on annuli'', whose main ingredient is the following estimate (see  \cite{SanSer4}*{(3.18)}). Given a ball $B_{s,t}$, if $|u|\geq \frac12$ on $\partial B_{s,t}$ then, for some constant $c>0$,
\begin{equation}\label{circle}
\frac12\int_{\partial B_{s,t}}|\nabla_{A}u|^2+\frac1{2\ep^2}(1-|u|^2)^2+\frac12\int_{B_{s,t}}|\curl A|^2\geq c\frac{d_{s,t}^2}{t},
\end{equation}
where $d_{s,t}$ is the degree of $u/|u|$ on $\partial B_{s,t}$. To apply this estimate, the circles $\partial B_{s,t}$'s must avoid the set $\{|u|\leq 1/2\}$ for most $s$'s and $t$'s and must be completely included in $\Omega$. To guarantee this, we now remove the ``bad" slices.  

Let us observe that by \eqref{majoF}, the coarea formula and the Cauchy Schwarz inequality, we have
\begin{align*}
C \lep^2 \ge F_\ep(u,A)\geq C\int_\Omega|\nabla|u||^2+\frac1{\ep^2}(1-|u|^2)^2&\geq C\int_\Omega \frac{|\nabla|u||(1-|u|^2)}\ep\\
&=C\int_0^\infty\int_{\{|u|=\sigma\}}\frac{(1-\sigma^2)}\ep d\H^2d\sigma.
\end{align*}
Therefore, there exists $\sigma_0\in[1/2,5/8]$ such that
$$
\H^2(\{|u|=\sigma_0\})\leq \ep^\frac34.
$$
By the coarea formula, we deduce that
$$
\S_1\colonequals \{s\in (0,\lga) \ | \ \H^1\left(\{|u|=\sigma_0\}\cap \psi^{-1}(s)\cap U' \right)\leq \ep^\frac12\}
$$
satisfies $|\S_1|\geq \lga -\ep^\frac14$, that is for most $s$'s and $t$'s the circles $\partial B_{s,t}$ can avoid the set $\{|u|\leq 1/2\}$.

\medskip
We also need a control on the area of $\partial S_{\nu_\ep}$, where $S_{\nu_\ep}$ is the set provided by Theorem \ref{theorem:epslevel}. From \cite{Rom}*{Lemma 5.5}, it is easy to see that 
$$
\H^2(\partial S_{\nu_\ep})\leq |\log\ep|^{-2}
$$
if $n$ is chosen sufficiently large (in terms of $m$). By using the coarea formula once again, we deduce that 
$$
\S_2\colonequals \{s\in (0,\lga) \ | \ \H^1\left(\partial S_{\nu_\ep}\cap \psi^{-1}(s)\cap U'\right)\leq |\log \ep|^{-1} \}
$$
is such that $|\S_2|\geq \lga-|\log\ep|^{-1}$.

Let us define
$$
\S_3\colonequals \{s\in(0,\lga) \ | \ B_{s,|\log\de|^{-2}}\subset \Omega \}
$$
and observe that there exists a constant $C=C(\partial\Omega)$ such that $|\S_3|\geq \lga -C(\partial\Omega)|\log\de|^{-2}$.

Finally, let us consider the set
$$
\S_4\colon= \left\{s\in(0,\lga) \ \vline \ \int_{\psi^{-1}(s)\cap U'}|\nabla_{A}u|^2+\frac1{2\ep^2}(1-|u|^2)^2+|\curl A|^2\leq C|\log \ep|^3 \right\}
$$
and observe that by the coarea formula, reasoning as for \eqref{circles1}, since \eqref{majoF} holds, we have  $|\S_4|\geq \lga -C|\log \ep|^{-1}$.

From the previous estimates we deduce, in particular, that $\S\colonequals  \cup_{i=1}^4 \S_i$ satisfies 
\begin{equation}\label{measureZ}
|\S|\geq \lga-C|\log\de|^{-2}.
\end{equation}
Let $s\in \S$, define 
$$
\T_s\colonequals \{t\in (\de^\frac12,|\log\de|^{-2}) \ | \ \partial B_{s,t}\cap (\{|u|\leq 1/2\}\cup S_{\nu_\ep})=\varnothing\},
$$
and observe that, for any $t\in \T_s$, we have \eqref{circle}. 

\medskip
Let us now observe that, since $|\T_s|\leq 1$, we have
\begin{align}
\int_0^{\lga}&\int_{(\psi^{-1}(s)\cap U')\setminus S_{\nu_\ep}}|\nabla_{A}u|^2+\frac1{2\ep^2}(1-|u|^2)^2d\H^2ds+\int_0^{\lga}\int_{\psi^{-1}(s)\cap U'}|\curl A|^2d\H^2ds\label{circles2}\\
&\geq \int_\S \int_{\T_s}\int_{\partial B_{s,t}}|\nabla_{A}u|^2+\frac1{2\ep^2}(1-|u|^2)^2d\H^1dtds+\int_\S \int_{\T_s}\int_{B_{s,t}}|\curl A|^2d\H^1dtds\notag\\
&\geq 2c\int_\S \int_{\T_s} \frac{d_{s,t}^2}t dtds.\notag
\end{align}

By Lemma \ref{lem:controllength}, Condition \ref{nondegencond}, and definition of $I_\good^{\ep,\de}$, we have that, for any $i\in I_\good^{\ep,\de}$,
$$
\|\ga_i^\ep-\ga_0\|_*\leq C\de \quad \mathrm{and}\quad \left||\ga_i^\ep|-|\ga_0| \right|\leq C\de.
$$
This implies from Lemma \ref{lem:tubular} that all ``good'' curves are contained in a tubular neighborhood of $\ga_0$ of thickness $C \de^\frac12$.

Since $s\in \S\subseteq \S_4$, evaluating the degree in the slice in two different ways, we have
$$
d_{s,t}=N_\good^{\ep,\de}+d_\bad^{s,t},
$$
where $d_\bad^{s,t}$ is the degree of the ``bad'' curves on $\partial B_{s,t}$.  We immediately see that
$$
|d_{s,t}|\geq N_\good^{\ep,\de}-n_\bad^s
$$ 
with $n_\bad^s$ being the number of bad curves on the slice $\psi^{-1}(s)\cap U'$.
 Moreover, since $s\in \S\subseteq \S_1$,
$$
\int_\S d_{s,t}^2ds\geq \int_\S (N_\good^{\ep,\de}-n_\bad^s)^2ds \geq |\S|(N_\good^{\ep,\de})^2-2N_\good^{\ep,\de}\sum_{i\in I_\bad^{\ep,\de}}|\ga_i^\ep|.
$$
This combined with \eqref{bad} and \eqref{measureZ} gives
\begin{equation}\label{estimateZ}
\int_\S (N_\good^{\ep,\de}-n_\bad^s)^2ds \geq (|\S|-2\eta^{-1})(N_\good^{\ep,\de})^2\geq\frac{|\ga_0|}2(N_\good^{\ep,\de})^2,
\end{equation}
where we have chosen (and fixed) $\eta=5|\ga_0|^{-1}$.

On the other hand, since $s\in \S \subset \S_1\cup \S_2$ and the function $1/t$ is decreasing, we have
$$
\int_{\T_s}\frac1t dt\geq \int_{\de^\frac12+\ep^\frac12+|\log\ep|^{-1}}^{|\log\de|^{-2}}\frac1t dt\geq \frac12 |\log\de^\frac12|,
$$if $\ep$, hence $\delta$ is small enough.
By combining this with \eqref{estimateZ}, we obtain
$$
\int_\S \int_{\T_s} \frac{d_{s,t}^2}t dtds\geq \frac{|\ga_0|}4 (N_\good^{\ep,\de})^2 |\log \de^\frac12|.
$$
Combining with \eqref{circles1} and \eqref{circles2}, this yields
\begin{equation}\label{excesscircles}
\E(u,A)\geq C (N_\good^{\ep,\de})^2 |\log \de|.
\end{equation} 

\medskip\noindent
{\bf Step 3:  Bound on $N_\good^{\ep,\de}$. }  By combining \eqref{excess} and \eqref{excesscircles}, we are led to
$$
N_\good^{\ep,\de} |\log \de|\leq C \log |\log \ep|.
$$
But, by our choice of $\de$ (recall \eqref{choicede1}), we have
$$
\frac{\log |\log \ep|}{|\log \de|}\leq C,
$$
which in turn allows us to conclude that $N_\good^{\ep,\de} \leq C$, that is the number of ``good'' curves is bounded independently of $\ep$. Inserting this estimate into \eqref{bad}, we also deduce that
$$
\sum_{i\in I_\bad^{\ep,\de}}|\ga_i^\ep|\leq C.
$$

\medskip\noindent{\bf Step 4: Conclusion.} Let $\alpha\in(0,1)$. We now choose 
$$
\de_1=\left(\frac{\log|\log\ep|}{|\log\ep|^\alpha}\right)^{\frac1{N}}.
$$
Since $\de_1>\de$ (for $\ep$ sufficiently small), we know that
$$
I_\bad^{\ep,\de_1}\subset I_\bad^{\ep,\de},
$$
which implies that 
$$
\sum_{i\in I_\bad^{\ep,\de_1}}|\ga_i^\ep|\leq \sum_{i\in I_\bad^{\ep,\de}}|\ga_i^\ep|\leq C.
$$ 
Let us now observe that the right-hand side of \eqref{ineqwithde} with $\de$ replaced by $\de_1$ is bounded by 
$$
C\frac{\log |\log \ep|}{|\log \ep|}(N_\good^{\ep,\de_1}+C).
$$
From this, we deduce that
\begin{align}
\sum_{i\in I_\good^{\ep,\de_1}}\alpha_i^\ep&\leq C\frac{\log |\log \ep|}{|\log \ep|}(N_\good^{\ep,\de_1}+C)\notag,\\
\sum_{i\in I_\bad^{\ep,\de_1}}|\ga_i^\ep|&\leq \de_1^{-N}C\frac{\log |\log \ep|}{|\log \ep|}(N_\good^{\ep,\de_1}+C)\label{bad1},\\
\E(u,A)&\leq C\log |\log \ep|(N_\good^{\ep,\de_1}+C).\notag
\end{align}
Arguing as in the previous step, we deduce that $N_\good^{\ep,\de_1}\leq C$ for some constant $C>0$ independent of $\ep$. Inserting this into \eqref{bad1}, we find
$$
\sum_{i\in I_\bad^{\ep,\de_1}}|\ga_i^\ep|\leq C\frac{\log|\log\ep|}{|\log\ep|^{1-\alpha}}.
$$
Finally, by Lemma \ref{lem:controllength}, Condition \ref{nondegencond}, and definition of $I_\good^{\ep,\de_1}$, we have that, for any $i\in I_\good^{\ep,\de_1}$,
$$
\|\ga_i^\ep-\ga_0\|_*\leq C\de_1 \quad \mathrm{and}\quad \left||\ga_i^\ep|-|\ga_0| \right|\leq C\de_1.
$$
Letting $N_0\colonequals N_\good^{\ep,\de_1}$ and $\tga=\sum_{i\in I_\bad^{\ep,\de_1}}\ga_i^\ep$, the proof of Theorem \ref{teo:boundedvorticity} is finished.

\end{proof}
\section{The first critical field and proof of Theorem \ref{teo:Hc1}}\label{sechc1}
We begin this section by providing a lower bound for the free energy with an $O(1)$ error, when we know that the vorticity is essentially concentrated in finitely many lines around $\ga_0$.

\begin{proposition}\label{prop:lowerbound}
Let us assume that $\ga_0:[a,b]\to \RR^3$ is straight and that $\partial\Omega$ is negatively curved or flat on the endpoints of $\ga_0$. Then, under the assumptions of Theorem \ref{teo:boundedvorticity}, we have
$$
F_\ep(u,A)\geq \pi N_0|\ga_0||\log \ep|-C_0,
$$
where $C_0>0$ is a constant that does not depend on $\ep$.
\end{proposition}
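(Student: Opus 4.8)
The plan is to reduce this three‑dimensional lower bound to a family of two‑dimensional ones by slicing transversally to $\ga_0$ and invoking the sharp $2$D vortex‑ball lower bounds. Normalize coordinates so that $\ga_0=\{0'\}\times[a,b]$ is a segment of the $x_3$‑axis, write points as $(x',x_3)$, and put $|\ga_0|=b-a$. For $s\in(a,b)$ set $\Omega_s=\Omega\cap\{x_3=s\}$. Since $\ga_0$ is straight the slices $\{x_3=s\}$ are flat, so Fubini together with $|\nab_Au|^2\ge|\partial_1u-iA_1u|^2+|\partial_2u-iA_2u|^2$ gives
$$
F_\ep(u,A)\ \ge\ \frac12\int_a^b\left(\int_{\Omega_s}|\nab_Au|^2+\frac1{2\ep^2}(1-|u|^2)^2+|(\curl A)_3|^2\,d\H^2\right)ds ,
$$
where $(\curl A)_3=\partial_1A_2-\partial_2A_1$ is the curl entering the $2$D magnetic Ginzburg--Landau functional; the inner integrand is, up to the discarded $x_3$‑derivative, a $2$D Ginzburg--Landau density on $\Omega_s$.

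Into each slice I would feed the structural information of Theorem \ref{teo:boundedvorticity}: by Lemma \ref{lem:tubular} the $N_0$ good curves $\ga_1,\dots,\ga_{N_0}$ lie in the tubular neighbourhood $\mathcal T_{\rho_\ep}$ of $\ga_0$ with $\rho_\ep=C\big(\log\lep/\lep^{\al}\big)^{1/(2N)}\to0$; the remaining curves $\tga$ have mass $\le C\log\lep/\lep^{1-\al}=o(1)$; and $\mu(u,A)=2\pi\sum_i\ga_i+2\pi\tga$ up to a flat‑norm error $\le C\lep^{-2}$. The hypotheses enter through the behaviour near the endpoints $\ga_0(a),\ga_0(b)$: because $\partial\Omega$ is flat or negatively curved there, locally $\Omega$ looks like a half‑space $\{x_3>a\}$ or like the exterior of a downward paraboloid, so (i) $\Omega_s$ contains a disk of some fixed radius $r_0>0$ about $\ga_0(s)$ for \emph{every} $s\in(a,b)$, and (ii) a curve confined to $\mathcal T_{\rho_\ep}$ can meet $\partial\Omega$ only at heights $x_3\le a$, resp. $x_3\ge b$, so each good curve crosses every slice $\Omega_s$ exactly once and with degree $+1$. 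For a positively curved boundary such as the ball both (i) and (ii) fail on an interval of length $\sim\rho_\ep^2$ near each endpoint and $\rho_\ep^2\lep\to\infty$, which is exactly why that geometry is excluded here and why the paper's remark appeals to the sharper $O(\lep^{-1/2})$ localization of \cite{RomSanSer}.

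One then checks that for a.e. $s$ the degree $d_s$ of $u/|u|$ on $\partial B(\ga_0(s),r_s)$ equals $N_0$ for a suitable $r_s$ with $r_0\le r_s\le C$: the good curves account for $+N_0$; a loop component of $\tga$ lying inside $\mathcal T_{\rho_\ep}$ crosses the slice an even number of times, contributing $0$; and a boundary‑to‑boundary component of $\tga$ is too short ($o(1)$, less than the distance from the bulk of $\ga_0$ to $\partial\Omega$) to enter $B(\ga_0(s),r_0)$ unless $s$ is $o(1)$‑close to an endpoint, where property (ii) again confines its endpoints outside $(a,b)$. Choosing $r_s$ so that $|u|\ge\frac12$ on $\partial B(\ga_0(s),r_s)$ — possible on a positive‑measure set of radii since $F_\ep(u,A)\le C\lep^2$ forces $|\{|u|\le\frac12\}|\le C\ep^2\lep^2$, and one simultaneously avoids $S_{\nu_\ep}$ — the two‑dimensional magnetic vortex‑ball lower bound (see \cite{SanSerBook}, cf. \cite{SanSer4}*{(3.18)}) gives
$$
\frac12\int_{B(\ga_0(s),r_s)}|\nab_Au|^2+\frac1{2\ep^2}(1-|u|^2)^2+|(\curl A)_3|^2\ \ge\ \pi N_0\log\frac{r_s}{\ep}-CN_0 .
$$
Integrating over $s\in(a,b)$, using $r_0\le r_s\le C$ and $N_0\le C$, one obtains $F_\ep(u,A)\ge\pi N_0(b-a)\lep-C_0=\pi N_0|\ga_0|\lep-C_0$.

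The main obstacle is exactly the endpoint analysis, i.e. establishing (i)--(ii) and controlling the slices $\{x_3=s\}$ for $s$ near $a$ or $b$: one must rule out any drop of $d_s$ below $N_0$, or any degeneration of the slice, on a set of $s$ of length $\gg\lep^{-1}$, while having at hand only the crude localization $\rho_\ep=o(1)$ from Theorem \ref{teo:boundedvorticity} rather than $\rho_\ep=O(\lep^{-1/2})$. Straightness of $\ga_0$ together with nonpositive boundary curvature at its endpoints is precisely what makes those contributions $O(1)$. The remaining points — a measurable selection of the radii $r_s$, and the bookkeeping of the small sets where a slice circle meets $\{|u|\le\frac12\}$, $S_{\nu_\ep}$ or $\partial\Omega_s$ — are routine.
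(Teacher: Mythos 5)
Your overall strategy --- slice transversally to the straight $\ga_0$, run two-dimensional vortex lower bounds on each flat slice, and integrate in the $z$ variable --- is the same as the paper's. The genuine gap lies in the step where you assert that for a.e.\ $s$ the degree of $u/|u|$ on $\partial B(\ga_0(s),r_s)$ equals $N_0$. You justify this by counting crossings of the approximating curves $\ga_1,\dots,\ga_{N_0}$ and $\tga$ with the slice, but the degree of $u$ on such a circle is governed by the slice of the true vorticity $\mu(u,A)$, not of its approximation, and the only control on $\mu(u,A)-2\pi\sum_i\ga_i-2\pi\tga$ provided by Theorem \ref{teo:boundedvorticity} is in the $\(C_T^{0,1}(\Omega,\RR^3)\)^*$ norm (you call it a flat-norm error, but for $1$-currents the flat norm is strictly stronger, and this distinction is precisely the issue). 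A dual-Lipschitz bound on a $3$D current gives no control on individual $2$D slices; even a flat-norm bound only yields an $L^1$-in-$s$ control of the slice flat norms, via Federer's slicing inequality \cite{Fed}*{4.3.1}. This is why the paper first upgrades the vorticity estimate to the flat norm on $\Omega_\ep$ (Lemma \ref{lemma:vorticityflat}, proved in the appendix, with $q$ chosen large), then slices to produce a good set $Z_\good$ of heights, and on each good slice runs the 2D ball construction to localize $\mu^z(u,A)$ into Dirac masses, which are compared in $\F(\Sigma_z)$ with $\nu_0^z+\nu_1^z$; only after bounding $\|\nu_1^z\|_{\F(\Sigma_z)}$ by the length of $\tga$ through a minimal connection, and using $\sum_i|d_i|\le N_0$ on the slices outside the exceptional set $Z$ (where the energy is already large enough), can one conclude that the ball degrees are nonnegative and sum exactly to $N_0$. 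None of this machinery appears in your argument, and without it the assertion $d_s=N_0$ does not follow from the estimates you quote.

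Two secondary points. Even for the approximating currents, ``each good curve crosses every slice exactly once with degree $+1$'' is a statement about net intersection numbers that needs the endpoint geometry together with the length pinching $\big||\ga_i|-|\ga_0|\big|\le C\de_1$, not just the tube localization; and your per-slice bound $\pi N_0\log(r_s/\ep)-CN_0$ from a boundary degree alone requires growing balls inside the disk (interior vortices may carry cancelling signs), which is exactly the two-stage ball-construction argument of the paper. These are repairable, but the slice-wise identification of the degree from the $3$D vorticity estimate is the technical heart of the proof and is missing from your proposal.
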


\begin{proof}
Let $(\u, \A)$ (depending on $\ep$) be such that $GL_\ep(\u,\A)\leq h_\ex J_0^2$.
We let $U$ be the regular tubular neighborhood of thickness $c_0>0$ of $\ga_0$ and consider the map $\psi$ defined in the proof of Theorem \ref{teo:boundedvorticity}. Since $\ga_0$ is straight, the coarea formula yields
$$
F_\ep(u,A)\geq \int_a^b \left(\int_{\Sigma_z} e_\ep(u,A)dxdy\right)dz,
$$ 
where  $e_\ep$ is the energy density corresponding to $F_\ep$ and $\Sigma_z$ corresponds to the slice $\psi^{-1}(z)\cap U$. Let us observe that since we assume that $\partial\Omega$ is negatively curved on the endpoints of $\ga_0$,  the size of the slices is nondecreasing as one approaches  the endpoints. Without this assumption, the slices close to the endpoints of $\ga_0$ would shrink, which would not allow the following argument to work.

Let us now consider slices of the $1$-currents $\mu(u,A)$ and $\nu_\ep$ by the function $\zeta(x,z)=z$. The slice of $\mu(u,A)$ in $U$ by $\zeta^{-1}(z)$ is a $0$-current supported in $\zeta^{-1}(z)$, which we will denote by $\mu^z(u,A)$ (in the geometric measure theory literature, it is usually denoted by $\langle \mu(u,A),\zeta,z\rangle$). 
For almost every $z$, $\mu^z(u,A)$ is the two dimensional Lebesgue measure on the horizontal plane $\zeta^{-1}(z)$, weighted by vorticity of $(u,A)$ in the $x$,$y$ variables. The slice $\langle \nu_\ep,\zeta,z\rangle$ which we denote $\nu_\ep^z$ corresponds, still for almost every $z$, to a sum of multiples of Dirac masses at points on the plane $\zeta^{-1}(z)$.


By taking $n$ large enough in Theorem \ref{theorem:epslevel} (and thus a large $q$ with $\alpha=1/2$), according to Lemma \ref{lemma:vorticityflat}, we have
$$
\|\mu(u,A) -\nu_\ep\|_{\F(\Omega_\ep)}\leq C|\log\ep|^{-4}.
$$
Then, letting $a_\ep=a+|\log\ep|^{-(q+1)}$ and $b_\ep=b-|\log\ep|^{-(q+1)}$, by \cite{Fed}*{4.3.1}, we deduce that
$$
\int_{a_\ep}^{b_\ep} \|\mu^z(u,A)-\nu_\ep^z\|_{F(\Sigma_z)}dz\leq C \|\mu(u,A)-\nu_\ep\|_{F(\Omega_\ep)}\leq C|\log\ep|^{-4},
$$
where $C>0$ is a constant that depends on $\ga_0$.

We let 
$$
Z_\good\colonequals \left\{z\in (a_\ep,b_\ep) \ | \ \|\mu^z(u,A)-\nu_\ep^z\|_{F(\Sigma_z)}\leq |\log\ep|^{-2}\right\}.
$$
Then, using the previous estimate, we deduce that
\begin{equation}\label{estZ}
|(a_\ep,b_\ep)\setminus Z_\good|\leq C|\log\ep|^{-2}.
\end{equation}
Let us now define 
$$
Z\colonequals \left\{ z\in Z_\good \ \vline  \ \int_{\Sigma_z}e_\ep (u,A)dxdy\geq \pi \left(N_0+\frac12\right)|\log \ep|\right\}.
$$
Let $z\in Z_\good\setminus Z$. Then, from the ``ball construction" with final radius $r=|\log\ep|^{-3}$, see \cite{SanSerBook}*{Chapter 4}, we get a collection $\mathcal B= \{B_i(a_i,r_i) \}_i$ of $k$ balls such that $r=\sum_i r_i$,
$$
\int_{\Sigma_z}e_\ep(u,A)dxdy\geq \pi D \left(|\log \ep|+\log r/D-C\right),
$$
where $D=\sum_i|d_i|$ is bounded independently of $\ep$ (since $N_0$ is bounded independently of $\ep$) and, for a.e. $z$,
$$
\left\|\mu^z(u,A)-2\pi\sum_{i=1}^k d_i \delta_{a_i}\right\|_{\F(\Sigma_z)}\leq C|\log\ep|^{-2}.
$$
We use the decomposition of $\nu_\ep$ defined in the proof of Theorem \ref{teo:boundedvorticity}, in order to write
$$
\nu_\ep^z=\nu_0^z+\nu_1^z,
$$
where $\nu_0^z$ (resp. $\nu_1^z$) corresponds to the slice of the ``good'' (resp. ``bad'') curves by $\zeta$. We recall that $\tga$ in the statement of this theorem corresponds to the sum in the sense of currents of the previously mentioned ``bad'' curves. We then have, for a.e. $z$,
$$
\nu_0^z=2\pi\sum_{i} d_i^0\delta_{b_i}\quad \mbox{and}\quad \nu_1^z=2\pi\sum_{i}d_i^1\delta_{c_i},
$$
where $\sum_{i}d_i^0=N_0$ and all the points $b_i$ are located at distance less or equal than a negative power of $|\log\ep|$ to the origin. We denote by $\mathbb L$ the minimal connection joining the collection of points $c_i$ (with their associated degree $d_i^1$) between each other, allowing connections to the boundary of $\Sigma_z$. Observe that, for a.e. $z$,
$$
\|\nu_1^z\|_{\F(\Sigma_z)}\leq |\mathbb L|.
$$
Moreover, choosing $\alpha=\frac14$ in Theorem \ref{teo:boundedvorticity}, we have
$$
|\mathbb L|\leq \sum_{i\in I_\bad} |\ga_i|=\left|\tga\right|\leq C\frac{\log|\log\ep|}{|\log\ep|^\frac34},
$$
which directly follows from the definition of $\mathbb L$. We are connecting the traces of the ``bad'' curves contained in the slice (either between them or to the boundary), and, therefore, the length of the minimal connection must be less or equal than the total length of the ``bad'' curves.
We thus deduce that, for a.e. $z$,
$$
\left\| 2\pi \sum_{i=1}^k d_i\delta_{a_i}-\nu_0^z \right\|_{\F(\Sigma_z)}\leq  C\frac{\log|\log\ep|}{|\log\ep|^\frac34}.
$$
Since $\sum_i |d_i|\leq N_0$ thanks to the upper bound for $z\in Z_\good\setminus Z$, and $\sum_{i} d_i^0=N_0$, we deduce that $d_i\geq 0$ and $\sum_i d_i=N_0$. Moreover, all the points $a_i$ with $d_i\neq 0$ are located at distance less or equal than a negative power of $|\log\ep|$ to the origin (since the points $b_i$ satisfy this).

We now let the balls grow again, following \cite{SanSerBook}*{Chapter 4}, choosing $c_0/100$ as final radius. This yields a new collection of balls, such that
$$
\int_{\Sigma_z}e(u,A)dxdy\geq \pi D \left(|\log \ep|-C\right),
$$
where the total degree $D$ remains the same as the before, that is, $D=N_0$, and $C$ is a constant that does not depend on $z$.

Finally, we have that
\begin{align*}
F(u,A)&\geq \int_a^b \left(\int_{\Sigma_z} e(u,A)dxdy\right)dz\\
&\geq \int_{a_\ep}^{b_\ep} \left(\int_{\Sigma_z} e(u,A)dxdy\right)dz\\
&\geq \int_{Z_\good\setminus Z}\left(\int_{\Sigma_z} e(u,A)dxdy\right)dz+ \int_{ Z}\left(\int_{\Sigma_z} e(u,A)dxdy\right)dz\\
&\geq \pi |Z_\good \setminus Z| N_0 \left(|\log \ep|-C\right) +\pi |Z| \left(N_0 +\frac12\right)|\log\ep|\\
&\geq \pi |Z_\good|N_0 \left(|\log \ep|-C\right).
\end{align*}
Using \eqref{estZ}, we deduce that 
$$|\ga_0|-|Z_\good|\leq |a-a_\ep|+|b-b_\ep|-|(a_\ep,b_\ep)\setminus Z_\good|=o(|\log\ep|),$$
and therefore
$$
F_\ep(u,A)\geq \pi |\ga_0|N_0\left(|\log \ep|-C\right),
$$
and the proposition is thus proved.
\end{proof}

We now provide a proof for Theorem \ref{teo:Hc1}. 
\begin{proof}[Proof of Theorem \ref{teo:Hc1}]
First we note that since Condition \ref{nondegencond} is assumed to hold, Theorem \ref{teo:boundedvorticity} applies. Let $(\u,\A)$ minimize $GL_\ep$.

\medskip\noindent
{\bf Step 1: Proving that $N_0=0$.} Let us assume towards a contradiction that $N_0>0$.  

Proposition \ref{prop:energysplitting} yields
\begin{equation*}
GL_\ep(\u,\A)\geq h_{\ex}^2 J_0+F_\ep(u,A)-h_{\ex}\int_\Omega \mu(u,A)\wedge B_0+o(\ep^\frac12),
\end{equation*}
where $(\u,\A)=(e^{ih_\ex\phi_0}u,h_\ex A_0+A)$.
Using that \eqref{majoF} holds, we may then apply Proposition \ref{prop:lowerbound} and the vorticity estimate in Theorem \ref{teo:boundedvorticity} to obtain
$$
F_\ep(u,A)-h_{\ex}\int_\Omega \mu(u,A) \wedge B_0 \geq  \pi N_0|\ga_0||\log\ep| -2\pi h_{\ex}\int_\Omega \(\sum_{i=1}^{N_0}\ga_i+\tga\) \wedge B_0-2C_0,
$$
where $C_0$ is the constant that appears in the statement of Proposition \ref{prop:lowerbound}. Observe that 
$$
h_\ex\int_\Omega \(\sum_{i=1}^{N_0}\ga_i+\tga\)\wedge B_0= h_\ex\sum_{i=1}^{N_0} \pr{B_0,\ga_i}+ h_\ex \pr{B_0,\tga}.
$$ 
By choosing $\alpha=\frac14$ in Theorem \ref{teo:boundedvorticity}, we have 
$$
|\tga|\leq C\frac{\log|\log\ep|}{|\log\ep|^\frac34}.
$$
Therefore, by the isoperimetric inequality (see \cite{Rom2}*{Remark 4.1}), we deduce that
$$
\left|\pr{B_0,\tga}\right|\leq C\left|\tga\right|^2\leq C \frac{(\log|\log \ep|)^2}{|\log\ep|^\frac32}.
$$
Thus,
$$
h_\ex\int_\Omega \(\sum_{i=1}^{N_0}\ga_i+\tga\)\wedge B_0= h_\ex\sum_{i=1}^{N_0} \pr{B_0,\ga_i}+o(1).
$$
Moreover, for $i=1,\dots,N_0$, we have
$$
\|\ga_i-\ga_0\|_*\leq C\left(\frac{\log\lep}{\lep^\frac14} \right)^\frac1N\leq \de,
$$
where the last inequality holds for every sufficiently small $\ep>0$. Here $\de$ is the constant that appears in \eqref{hypde}. Therefore
$$
\pr{B_0,\ga_0}\geq \pr{B_0,\ga_i}\quad \mathrm{for} \ i=1,\dots,N_0.
$$ 
Thus 
\begin{multline*}
h_\ex\int_\Omega \(\sum_{i=1}^{N_0}\ga_i+\tga\)\wedge B_0= h_\ex\sum_{i=1}^{N_0} \pr{B_0,\ga_i}+o(1)\\
\leq  h_\ex\sum_{i=1}^{N_0} \langle B_0,\ga_0\rangle+o(1)= h_\ex N_0\langle B_0,\ga_0\rangle+o(1).
\end{multline*}
Hence
$$
F_\ep(u,A)-h_{\ex}\int_\Omega \mu(u,A) \wedge B_0 \geq \pi N_0|\ga_0| \left(|\log\ep| - 2\R_0 h_\ex \right)-2C_0.
$$
Writing $h_{\ex}=H_{c_1}^0-K_0$ with $H_{c_1}^0=\dfrac1{2\R_0}|\log \ep|$, we get
$$
GL_\ep(\u,\A)\geq h_{\ex}^2J_0+2\pi N_0 |\ga_0|\R_0 K_0-2C_0.
$$
Combining with \eqref{boundJ0}, we deduce that
$$
0\geq 2\pi N_0 |\ga_0|\R_0 K_0-2C_0.
$$
We therefore reach a contradiction provided $K_0$ is large enough. Hence, $N_0=0$.

\medskip\noindent
{\bf Step 2. Applying a clearing out result.}
It suffices to reproduce Step 2 of the proof of Theorem 2 in \cite{Rom2} to conclude that $\|1-|\u|\|_{L^\infty} =o(1)$ for $(\u, \A)$ a minimizer.
\end{proof}
As a direct consequence of the previous result, we obtain an improved lower bound for $H_{c_1}$.

\begin{corollary}
There exist constants $\ep_0,K_0>0$ such that for any $\ep<\ep_0$ we have 
$$
H_{c_1}^0-K_0\leq H_{c_1}.
$$
\end{corollary}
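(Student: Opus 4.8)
The plan is to read the corollary off directly from Theorem~\ref{teo:Hc1}. First I would recall the definition of the first critical field: following \cite{Rom2}, for a given $\ep$ the quantity $H_{c_1}$ is the threshold value of the applied field intensity such that for $h_\ex$ below it the minimizers of $GL_\ep$ are vortex-less --- in the quantitative sense $\|1-|\u|\|_{L^\infty(\Omega)}=o(1)$ as $\ep\to0$ --- while above it minimizers carry vortices. Thus establishing a lower bound $H_{c_1}\ge H_{c_1}^0-K_0$ amounts exactly to showing that every minimizer is vortex-less whenever $h_\ex\le H_{c_1}^0-K_0$.

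This is precisely the content of Theorem~\ref{teo:Hc1}: under the ambient hypotheses (straight $\ga_0$, $\partial\Omega$ negatively curved or flat near the endpoints of $\ga_0$, Condition~\ref{nondegencond}, and \eqref{hypde}) it furnishes $\ep_0>0$ and a constant $K_0>0$, both independent of $\ep$, such that for every $\ep<\ep_0$ and every $h_\ex\le H_{c_1}^0-K_0$ any minimizer $(\u,\A)$ of $GL_\ep$ satisfies $\|1-|\u|\|_{L^\infty(\Omega)}=o(1)$. Hence no applied field strength $\le H_{c_1}^0-K_0$ can lie above the threshold $H_{c_1}$, and we conclude $H_{c_1}\ge H_{c_1}^0-K_0$ for all $\ep<\ep_0$, which is the stated inequality.

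Since Theorem~\ref{teo:Hc1} is assumed, there is no genuine obstacle in this last step. The only point worth a line of care is to verify that the notion of ``vortex-less'' used in Theorem~\ref{teo:Hc1} coincides with the one entering the definition of $H_{c_1}$ in \cite{Rom2} (both being $\|1-|\u|\|_{L^\infty(\Omega)}=o(1)$), so that the comparison of field strengths is legitimate, and to note that the corollary is understood within the same geometric setting as Theorem~\ref{teo:Hc1}.
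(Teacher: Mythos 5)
Your proposal is correct and matches the paper, which offers no separate argument but simply records the corollary as a direct consequence of Theorem~\ref{teo:Hc1}: vortex-lessness of minimizers for all $h_\ex\le H_{c_1}^0-K_0$ is exactly the statement that the threshold $H_{c_1}$ lies above $H_{c_1}^0-K_0$. Your added remark that the notion of ``vortex-less'' must agree with the one defining $H_{c_1}$ in \cite{Rom2}, and that the corollary inherits the geometric hypotheses of Theorem~\ref{teo:Hc1}, is exactly the right point of care.
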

This combined with \cite{Rom2}*{Theorem 1.3} yields that
$$
H_{c_1}=H_{c_1}^0+O(1).
$$
\appendix
\section{Proof of the vorticity estimate for the flat norm}\label{appendix}
In this section we provide a proof of Lemma \ref{lemma:vorticityflat}, which consists of a modification of the proof of \eqref{EstimateJ0} for $\gamma=1$ in \cite{Rom}*{Section 8}. For the reader convenience, we will start by recalling some of the key elements of the construction of the vorticity approximation $\nu_\ep$.

\subsection{Choice of grid}
Let us fix an orthonormal basis $(e_1,e_2,e_3)$ of $\RR^3$ and consider a grid $\GG=\GG(a,\de)$ given by the collection of closed cubes $\CC_i\subset \RR^3$ of side-length $\de=\de(\ve)$ (conditions on this parameter are given in the lemma below).
In the grid we use a system of coordinates with origin in $a \in \Omega$ and orthonormal directions $(e_1,e_2,e_3)$. 
From now on we denote by $\RRR_1$ (respectively $\RRR_2$) the union of all edges (respectively faces) of the cubes of the grid. We have the following lemma, taken from \cite{Rom}*{Section 2}.

\begin{lemma}[Choice of grid]\label{Lemma:Grid} 
For any $\gamma\in(-1,1)$ there exist constants $c_0(\gamma),c_1(\gamma)>0$, $\de_0(\Omega)\in(0,1)$ such that, for any $\ve,\de>0$ satisfying 
$$
\ve^{\frac{1-\gamma}2}\leq c_0\quad \mathrm{and}\quad c_1\ve^{\frac{1-\gamma}4}\leq \de \leq \de_0,
$$
if $(u,A)\in H^1(\Omega,\C)\times H^1(\Omega,\RR^3)$ is a configuration such that $F_\ve(u,A)\leq \ve^{-\gamma}$ then there exists $b_\ve\in \Omega$ such that the grid $\GG(b_\ve,\delta)$ satisfies
\begin{subequations}\label{propGrid}
\begin{equation}\label{prop1Grid}
|u_\ve|>5/8\quad \mathrm{on}\ \RRR_1(\GG(b_\ve,\de))\cap \Omega,
\end{equation}
\begin{equation*}\label{prop2Grid}
\int\limits_{\RRR_1(\GG(b_\ve,\delta))\cap \Omega}e_\ve(u,A)d\H^1\leq  C \de^{-2}F_\ve(u,A),
\end{equation*}
\begin{equation*}\label{prop3Grid}
\int\limits_{\RRR_2(\GG(b_\ve,\delta))\cap \Omega}e_\ve(u,A)d\H^2\leq  C \de^{-1}F_\ve(u,A),
\end{equation*}
\end{subequations}
where $C$ is a universal constant, and where hereafter $\H^d$ denotes the $d$-dimensional Hausdorff measure for $d\in \N$ and $e_\ep$ denotes the energy density corresponding to $F_\ep$. 
\end{lemma}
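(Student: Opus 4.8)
The plan is to choose the offset $b_\ve$ of the grid by an averaging argument (Fubini followed by Chebyshev) over all translates of the grid inside one fundamental cube $Q=[0,\de)^3$, arranging that each of the three required properties fails only on a subset of $Q$ of measure $<|Q|/4$, so that an admissible $b_\ve$ exists.

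For the two integral bounds, I would observe that, as the offset $a$ ranges over $Q$, the faces of $\GG(a,\de)$ orthogonal to $e_k$ (respectively the edges parallel to $e_k$) sweep out, with unit multiplicity, the family of all affine planes (respectively lines) of direction $e_k$ spaced $\de$ apart. The coarea formula then gives
\[
\frac1{\de^3}\int_Q \Big(\int_{\RRR_2(\GG(a,\de))\cap\Omega} e_\ve\,d\H^2\Big)\,da=\frac3\de\int_\Omega e_\ve\,dx\le \frac3\de F_\ve(u,A),
\]
\[
\frac1{\de^3}\int_Q \Big(\int_{\RRR_1(\GG(a,\de))\cap\Omega} e_\ve\,d\H^1\Big)\,da=\frac3{\de^2}\int_\Omega e_\ve\,dx\le \frac3{\de^2}F_\ve(u,A),
\]
so by Chebyshev's inequality the offsets violating $\int_{\RRR_2\cap\Omega}e_\ve\le C\de^{-1}F_\ve$ or $\int_{\RRR_1\cap\Omega}e_\ve\le C\de^{-2}F_\ve$ occupy a set of measure $<|Q|/4$ once $C$ is a large enough universal constant.

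The real work is the condition $|u|>5/8$ on the $1$-skeleton \eqref{prop1Grid}. By the absolute-continuity-on-lines characterization of $H^1$ functions, for a.e.\ $a$ the restriction of $|u|$ to each edge of $\GG(a,\de)\cap\Omega$ lies in $H^1$ of that segment, hence is continuous, so the pointwise condition makes sense, and I would work only with such $a$. Call an edge \emph{bad} if $|u(p)|\le 5/8$ at some point $p$ on it. The key observation is that every bad edge of full length $\de$ (such an edge lies in the interior of $\Omega$; the short edges meeting $\partial\Omega$ must be treated by the separate boundary analysis of \cite{Rom}) carries energy $\int_e e_\ve\,d\H^1\ge c\ve^{-1}$ for a universal $c>0$. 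Indeed, either $|u|<7/8$ on all of $e$, in which case $\int_e\frac1{2\ve^2}(1-|u|^2)^2\ge c\de\ve^{-2}\ge c\ve^{-1}$ since $\de\ge c_1\ve^{(1-\gamma)/4}\ge c_1\ve$; or $|u|$ reaches $7/8$ on $e$, and then on a subinterval $J\subset e$ where $|u|\le 7/8$ and $|u|$ oscillates by at least $1/8$, the kinetic and potential parts of $\int_J e_\ve$ are bounded below by $c|J|^{-1}$ (Cauchy--Schwarz together with the diamagnetic inequality) and by $c|J|\ve^{-2}$ (from $(1-|u|^2)^2\ge c$ on $J$), whose sum is $\ge c\ve^{-1}$ by the arithmetic--geometric mean inequality, uniformly in $|J|$; hence $\int_e e_\ve\ge\int_J e_\ve\ge c\ve^{-1}$. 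Consequently the number of bad edges of $\GG(a,\de)$ is at most $c^{-1}\ve\int_{\RRR_1(\GG(a,\de))\cap\Omega}e_\ve\,d\H^1$, and by the averaging identity above its mean over $a\in Q$ is $\le 3c^{-1}\ve\,\de^{-2}F_\ve\le 3c^{-1}c_1^{-2}\ve^{(1-\gamma)/2}$, using $\de\ge c_1\ve^{(1-\gamma)/4}$ and $F_\ve\le\ve^{-\gamma}$. Choosing $c_0=c_0(\gamma)$ small enough in the hypothesis $\ve^{(1-\gamma)/2}\le c_0$ makes this $<1/4$, so again by Chebyshev the offsets admitting even one bad edge occupy measure $<|Q|/4$.

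Finally, removing these three exceptional sets (together with the null set of offsets where the line slicing fails) leaves a positive-measure set of admissible $b_\ve$, which proves the lemma. I expect the main obstacle to be precisely this second step: extracting pointwise control of $|u|$ on a one-dimensional set from the merely $H^1$ hypothesis. It is the scaling window $c_1\ve^{(1-\gamma)/4}\le\de\le\de_0$ --- which forces $\de\gg\ve$ and, more importantly, $\de^2\gg\ve^{1-\gamma}$ --- together with $\ve^{(1-\gamma)/2}\le c_0$ that lets the ``each bad edge costs $\ve^{-1}$'' count defeat the total available edge energy $\sim\de^{-2}F_\ve\sim\de^{-2}\ve^{-\gamma}$. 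The one remaining delicate point, which I would dispatch by flattening $\partial\Omega$ (or extending $(u,A)$ across it) as in \cite{Rom}, is that edges crossing $\partial\Omega$ can be arbitrarily short and hence must be absorbed into the boundary construction rather than into this energy count.
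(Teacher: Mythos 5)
This lemma is not proved in the paper at all: it is imported verbatim from \cite{Rom}*{Section 2}, so there is no internal proof to compare against. Your argument is, as far as the standard literature goes, essentially the proof behind the cited result: average over translates $a\in[0,\de)^3$ of the grid (the identities $\frac1{\de^3}\int_Q\int_{\RRR_2\cap\Omega}e_\ve\,d\H^2\,da=\frac3\de\int_\Omega e_\ve$ and the analogous one for $\RRR_1$ are correct), use Markov/Chebyshev for the two integral bounds, and control \eqref{prop1Grid} by the dichotomy that a full edge containing a point with $|u|\le 5/8$ costs at least $c/\ve$ (either the potential term over the whole edge, or the AM--GM combination of $|\nabla|u||^2$ and the potential on the subinterval where $|u|$ climbs from $5/8$ to $7/8$, using the diamagnetic inequality). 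Your scaling check is also right: the expected number of bad edges is $\lesssim \ve\,\de^{-2}F_\ve\le C c_1^{-2}\ve^{(1-\gamma)/2}\le Cc_1^{-2}c_0$, which the hypotheses make smaller than $1/4$, and the measure-theoretic point about restricting $H^1$ functions to almost every line is handled correctly.

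The one loose end is the one you flag yourself: edges meeting $\partial\Omega$ may intersect $\Omega$ in arbitrarily short segments, and for those the ``each bad edge costs $c/\ve$'' dichotomy fails (only the potential term $\sim \ell/\ve^2$ survives, with $\ell$ possibly tiny), so the Markov count does not control them and \eqref{prop1Grid} on all of $\RRR_1\cap\Omega$ is not yet established. Deferring this to the boundary analysis of \cite{Rom} is legitimate but leaves your write-up incomplete as it stands; a self-contained fix is available along the lines you suggest: since the bad-edge lower bound uses only $|\nabla|u||$ and the potential, it suffices to extend $\rho=|u|$ (not $A$) by reflection across the smooth boundary to a $\de_0$-neighborhood of $\Omega$, with energy bounded by $CF_\ve(u,A)$; then every edge meeting $\Omega$ is a full edge for the extended density and the same averaging count applies, at the price of a constant and of taking $\de_0(\Omega)$ small. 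With that addition your proof is complete and matches the approach of the cited source.
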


From now on we drop the cubes of the grid $\GG(b_\ve,\de)$ given by the previous lemma, whose intersection with $\RR^3\setminus \Omega$ is non-empty. We also define 
\begin{equation}
\Theta\colonequals \Omega \setminus \cup_{\CC_l\in \GG} \CC_l\quad\mathrm{and}\quad \partial \GG \colonequals \partial \left(\cup_{\CC_l\in \GG} \CC_l \right)\label{unioncubes}.
\end{equation}
Observe that, in particular, $\partial \Theta=\partial \GG \cup \partial \Omega$.

We remark that $\GG(b_\ve,\de)$ carries a natural orientation. The boundary of every cube of the grid will be oriented accordingly to this orientation. Each time we refer to a face $\omega$ of a cube $\CC$, it will be considered to be oriented with the same orientation of $\partial\CC$. If we refer to a face $\omega\subset \partial\GG$, then the orientation used is the same of $\partial\GG$.

\subsection{2D vorticity estimate}
Given a two dimensional Lipschitz domain $\omega \subset \Omega$, we let $(s,t,0)$ denote coordinates in $\RR^3$ such that $\omega\subset \{(s,t,0) \in \Omega\}$. We define $\mu_\ve\colonequals \mu_\ve(u,A)[\partial_s,\partial_t]$, and write $\mu_{\ve,\omega}$ its slice (in the same sense as in the proof of Proposition \ref{prop:lowerbound}) supported in $\{(s,t,0) \in \Omega\}$.
We have the following 2D vorticity estimate (see \cite{Rom}*{Corollary 4.1}).
\begin{lemma}\label{cor:2dVortEstimate}
Let $\gamma\in(0,1)$ and assume that $(u,A)\in H^1(\Omega,\C)\times H^1(\Omega,\RR^3)$ is a configuration such that $F_\ve(u,A)\leq \ve^{-\gamma}$, so that by Lemma \ref{Lemma:Grid} there exists a grid $\GG(b_\ve,\de)$ satisfying \eqref{prop1Grid}. Then there exists $\ve_0(\gamma)$ such that, for any $\ve<\ve_0$ and for any face $\omega\subset \RRR_2(\GG(b_\ve,\de))$ of a cube of the grid $\GG(b_\ve,\de)$, letting $\{U_{j,\omega}\}_{j\in J_\omega}$ be the collection of connected components of $\{x\in\omega \ | \ |1-|u(x)||\geq 1/2\}$ and
$\{S_{i,\omega}\}_{i\in I_\omega}$ denote the collection of connected components of $\{x\in\omega \ | \ |u_\ve(x)|\leq 1/2\}$ whose degree $d_{i,\omega}\colonequals \mathrm{deg}(u/|u|,\partial S_{i,\omega})\neq 0$, we have
\begin{multline*}
\left\| \mu_{\ve,\omega} -2\pi \sum_{i\in I_\omega}d_{i,\omega}\delta_{a_{i,\omega}} \right\|_{C^{0,1}(\omega)^*}\leq \\ C\max(r_\omega,\ve)\left(1+\int_\omega e_\ve(u,A)d\H^2+\int_{\partial \omega} e_\ve(u,A)d\H^1\right),
\end{multline*}
where $a_{i,\omega}$ is the centroid of $S_{i,\omega}$, $r_\omega\colonequals \sum_{j\in J_\omega}\mathrm{diam}(U_{j,\omega})$, and $C$ is a universal constant.
\end{lemma}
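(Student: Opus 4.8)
The plan is to prove this as a two-dimensional covariant Jacobian estimate, in the spirit of Jerrard--Soner \cite{JerSon} and Sandier--Serfaty \cite{SanSerBook}: test $\mu_{\ve,\omega}$ against Lipschitz functions, localize near the vortex cores, and keep everything expressed through the gauge-invariant quantities $\nabla_A u$, $j(u,A)$, $\curl A$, $1-|u|^2$. Work in the plane $P=\{(s,t,0)\}$ carrying $\omega$ and write $u,A$ for the restrictions to $P$, so that in the $(s,t)$ variables $\mu_{\ve,\omega}=\curl\bigl(j(u,A)+A\bigr)$ as a signed measure on $\omega$, where $j(u,A)=(iu,\nabla_A u)$ and $j(u,A)+A=(iu,\nabla u)+(1-|u|^2)A$. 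Since $\omega$ is a face of the grid $\GG(b_\ve,\de)$, its boundary lies in $\RRR_1\cap\Omega$, so $|u|>5/8$ on $\partial\omega$ by \eqref{prop1Grid}; fixing a gauge with $\diver A=0$ in $\Omega$ and $A\cdot\vec\nu=0$ on $\partial\Omega$, we may also assume $\|A\|_{H^1(\Omega)}\le C\|\curl A\|_{L^2(\RR^3)}$. It then suffices to bound $\bigl\langle\mu_{\ve,\omega}-2\pi\sum_{i\in I_\omega}d_{i,\omega}\delta_{a_{i,\omega}},\zeta\bigr\rangle$ by the right-hand side of the stated inequality, uniformly over $\zeta\in C^{0,1}(\omega)$ with $\|\zeta\|_{C^{0,1}(\omega)}\le1$.

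First I would isolate the cores. Covering $\{|1-|u||\ge1/2\}\cap\omega$ by the components $U_{j,\omega}$, then merging overlapping dilates into finitely many clusters, a Fubini/co-area argument attaches to each cluster a disk $\widetilde U=B(a,\rho)$ with $\rho$ comparable to $\max\bigl(\text{diam of cluster},\,\ve\bigr)$ such that $|u|\ge1/2$ on $\partial\widetilde U$ and $\int_{\partial\widetilde U}e_\ve\,d\H^1\le C\rho^{-1}\int_{B(a,2\rho)}e_\ve\,d\H^2$; the radii sum to $\le C\max(r_\omega,\ve)$. Let $\widehat G=\omega\setminus\bigcup\widetilde U$, so $|u|\ge1/2$ on $\widehat G\supset\partial\omega$, and on $\widehat G$ write $u=|u|e^{i\phi}$ (locally, with monodromy $2\pi d_{i,\omega}$ of $\phi$ around the cores). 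Using $(iu,\nabla u)=|u|^2\nabla\phi$ and $\curl\nabla\phi=0$ one obtains on $\widehat G$ the identity $\mu_{\ve,\omega}=\curl\xi$, where $\xi\colonequals(|u|^2-1)(\nabla\phi-A)=(|u|^2-1)j(u,A)/|u|^2$, so that $|\xi|\le2\,\bigl|1-|u|^2\bigr|\,|\nabla_A u|$ there. By Stokes, $\bigl\langle\mu_{\ve,\omega}\lfloor\widehat G,\zeta\bigr\rangle=\int_{\partial\widehat G}\zeta\,\xi\cdot\tau\,d\H^1-\int_{\widehat G}\nabla^\perp\zeta\cdot\xi\,d\H^2$; by Cauchy--Schwarz and the potential term of $F_\ve$, $\int_{\widehat G}|\xi|+\int_{\partial\omega}|\xi|\le C\ve\bigl(\int_\omega e_\ve+\int_{\partial\omega}e_\ve\bigr)$, while the $\partial\widetilde U$--parts of the boundary integral are estimated through $\int_{\partial\widetilde U}e_\ve$ and summed against the total radius; altogether the $\widehat G$--contribution is $\le C\max(r_\omega,\ve)\bigl(1+\int_\omega e_\ve+\int_{\partial\omega}e_\ve\bigr)$.

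Next I would treat each enlarged core. Since $(j(u,A)+A)\cdot\tau=\partial_\tau\phi-\xi\cdot\tau$ on $\partial\widetilde U$, Stokes gives
\[
\bigl\langle\mu_{\ve,\omega}\lfloor\widetilde U,\zeta\bigr\rangle=\int_{\partial\widetilde U}\zeta\,\partial_\tau\phi\,d\H^1-\int_{\partial\widetilde U}\zeta\,\xi\cdot\tau\,d\H^1-\int_{\widetilde U}\nabla^\perp\zeta\cdot(j(u,A)+A)\,d\H^2 .
\]
Writing $\partial_\tau\phi=\frac{j(u,A)\cdot\tau}{|u|^2}+A\cdot\tau$ and replacing $\zeta$ by its value at the core centroid $a_{i,\omega}$ --- at cost at most $C\rho\bigl(\int_{B(a,2\rho)}e_\ve\bigr)^{1/2}$ plus lower-order terms in $\rho$, using $|j(u,A)|\le|u||\nabla_A u|$, the co-area choice, and $\int_{\partial\widetilde U}A\cdot\tau=\int_{\widetilde U}\curl A$ to control $\int_{\partial\widetilde U}|\partial_\tau\phi|$ --- and using $\int_{\partial\widetilde U}\partial_\tau\phi=2\pi\,\mathrm{deg}(u/|u|,\partial\widetilde U)$ together with additivity of degree over the cores inside $\widetilde U$, one gets the main contribution $2\pi\sum_i d_{i,\omega}\zeta(a_{i,\omega})$ up to errors of the stated type; the clusters of total degree zero and the interior integral produce only such errors. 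Summing over all $\widetilde U$ (with total radius $\le C\max(r_\omega,\ve)$), collecting with the $\widehat G$--contribution, and taking the supremum over $\zeta$ yields the lemma.

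\textbf{Main obstacle.} The algebra above is routine; the real difficulty is the uniform bookkeeping of the enlargement. One must select the radii $\rho$ (via co-area) so that, \emph{simultaneously}, $|u|\ge1/2$ on every $\partial\widetilde U$, the boundary energy $\int_{\partial\widetilde U}e_\ve$ is dominated by $\rho^{-1}$ times the bulk energy of a concentric disk, the disks can be made pairwise disjoint after merging clusters, and the radii still sum to $O(\max(r_\omega,\ve))$ --- the $\max$ with $\ve$ being precisely what lets one enlarge cores smaller than $\ve$ up to scale $\ve$. One then has to verify that every error --- in particular the covariant terms involving $A$ and the magnetic flux $\int_{\widetilde U}\curl A$ through the cores --- is genuinely bounded by $C\max(r_\omega,\ve)\bigl(1+\int_\omega e_\ve+\int_{\partial\omega}e_\ve\bigr)$, not by a worse power of $r_\omega$ or of the energies. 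This is the technical heart of the Jerrard--Soner/Sandier--Serfaty Jacobian estimate, carried out in the present geometry in \cite{Rom}*{Section~4}.
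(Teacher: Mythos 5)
The paper does not actually prove this lemma: it is quoted verbatim from \cite{Rom}*{Corollary 4.1} (``We have the following 2D vorticity estimate (see \cite{Rom}*{Corollary 4.1})''), so there is no internal proof to compare with, and your outline is indeed the strategy behind that reference --- the quantitative Jerrard--Soner/Sandier--Serfaty Jacobian estimate adapted to the gauged energy on a grid face, with the extra term $\int_{\partial\omega}e_\ve$ coming from components of $\{|1-|u||\ge 1/2\}$ that may touch $\partial\omega$. However, as written your argument has a step that fails quantitatively, independently of the ``bookkeeping'' you defer to \cite{Rom}*{Section 4}. You propose to bound the $\partial\widetilde U$ portions of the exterior boundary integral $\int_{\partial\widehat G}\zeta\,\xi\cdot\tau$ ``through $\int_{\partial\widetilde U}e_\ve$ and summed against the total radius''. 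If these terms are estimated on their own, the only available bound is $\|\zeta\|_\infty\int_{\partial\widetilde U}|\xi|\le C\ve\int_{\partial\widetilde U}e_\ve\le C(\ve/\rho)\int_{\mathrm{ann}}e_\ve$, and since $\rho\gtrsim\ve$ this only yields $C\int_\omega e_\ve$ after summation, with no factor $\max(r_\omega,\ve)$; in the relevant regime ($r_\omega\lesssim\ve$, $\int_\omega e_\ve\sim|\log\ve|$) this is not controlled by the right-hand side of the lemma. These terms must not be estimated separately: with the correct sign $(j+A)\cdot\tau=\partial_\tau\phi+\xi\cdot\tau$ (you wrote a minus), the $\int_{\partial\widetilde U}\zeta\,\xi\cdot\tau$ contributions from the core side and from the exterior side cancel exactly; equivalently, one must organize the computation so that on each core boundary the test function enters only through $\zeta-\zeta(a_{i,\omega})$, which has size $\rho$. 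Only then does every boundary term acquire the prefactor $\max(r_\omega,\ve)$.

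A second soft spot is the gauge field on the slice. To control $\int_{\partial\widetilde U}|\partial_\tau\phi|$ you invoke $A\cdot\tau$ and $\int_{\partial\widetilde U}A\cdot\tau=\int_{\widetilde U}\curl A$, but $A$ itself (unlike $\nabla_Au$, $j(u,A)$, $1-|u|^2$, $\curl A$) is not controlled by the face energy density, and Stokes only handles the signed integral: the error term $\int_{\partial\widetilde U}(\zeta-\zeta(a))A\cdot\tau$ converts into $\int_{\widetilde U}(\zeta-\zeta(a))\curl A+\int_{\widetilde U}\nabla^\perp\zeta\cdot A$, reintroducing $A$ uncoupled; the same issue appears in your interior term $\int_{\widetilde U}\nabla^\perp\zeta\cdot(j+A)$. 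The standard, gauge-consistent way out is not to pass through $\phi$ and $A$ separately inside the cores: write $\int_{\widetilde U}\zeta\,\mu_{\ve,\omega}=\zeta(a)\int_{\partial\widetilde U}(j+A)\cdot\tau+\int_{\widetilde U}\bigl(\zeta-\zeta(a)\bigr)\mu_{\ve,\omega}$, use $\int_{\partial\widetilde U}(j+A)\cdot\tau=2\pi\deg\bigl(u/|u|,\partial\widetilde U\bigr)+\int_{\partial\widetilde U}\xi\cdot\tau$ together with the pointwise gauge-invariant bound $|\mu_{\ve,\omega}|\le|\nabla_Au|^2+\bigl|1-|u|^2\bigr|\,|\curl A|$, so that $A$ never appears alone. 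With these two corrections your scheme does reproduce the argument of \cite{Rom}; the remaining work (coarea choice of radii, merging and disjointness of the enlarged cores, and the $\max(r_\omega,\ve)$ accounting) is precisely what that reference carries out and what your sketch, by your own admission, leaves aside --- so as it stands the proposal is an outline of the cited proof rather than an independent proof.
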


In view of the previous corollary, it is important to bound from above $r_\omega$. For that, we borrow from \cite{Rom}*{Remark 4.1}, the estimate
\begin{equation}
\label{radiusGrid}
\sum_{\omega \subset \RRR_2(\GG(b_\ve,\de))} r_\omega\leq C\ve \int\limits_{\RRR_2(\GG(b_\ve,\de))} e_\ve(u,A)d\H^2 \leq C \ve \delta^{-1}F_\ve(u,A),
\end{equation}
where $\sum_{\omega\subset \RRR_2(\GG(b_\ve,\de))}$ denotes the sum over all the faces $\omega$ of cubes of the grid $\GG(b_\ve,\de)$.

\subsection{Construction of the vorticity approximation\label{subsec:vorticity}}
For the purpose of this appendix, it is not necessary to remind the full construction of the vorticity approximation $\nu_\ep$ that appears in the statement of Theorem \ref{theorem:epslevel}. Nevertheless, let us next mention a few  of the main ingredients. 

Given $\gamma\in (0,1)$ and a configuration $(u,A)\in H^1(\Omega,\C)\times H^1(\Omega,\RR^3)$ such that $F_\ve(u,A)\leq \ve^{-\gamma}$, Lemma \ref{Lemma:Grid} provides a grid $\GG(b_\ve,\de)$ satisfying \eqref{propGrid}. On the face $\omega$ of each cube $\CC_l$ of the grid, Corollary \ref{cor:2dVortEstimate} gives the existence of points $a_{i,\omega}$ and integers $d_{i,\omega}\neq 0$ such that
\begin{equation}\label{apmu}
\mu_{\ve,\omega}\approx 2\pi \sum_{i\in I_\omega} d_{i,\omega}\delta_{a_{i,\omega}}.
\end{equation}
The vorticity approximation $\nu_\ep$ is then constructed in such a way that its restriction to the face $\omega$ of any cube of the grid coincides with the right-hand side of \eqref{apmu}. The key for this is that, since $\partial\mu(u,A)=0$ relative to any cube of the grid $\CC_l$, we have
$$
\sum_{\omega\subset \partial \CC_l} \sum_{i\in I_\omega} d_{i,\omega}=0,
$$
which allows to define the vorticity approximation in any cube of the grid as the minimal connection associated to the collection of points where the vortices are located on the boundary of the cube. Analogously, since $\partial \mu (u,A)=0$ relative to $\partial \GG$, we have 
$$
\sum_{\omega\subset \partial \GG} \sum_{i\in I_\omega} d_{i,\omega}=0,
$$
which allows to define the vorticity approximation in $\Theta$ as the minimal connection through the boundary $\partial\Omega$ associated to the collection of points where the vortices are located on $\partial\GG$. The detailed construction can be found in \cite{Rom}*{Section 5}.

\subsection{Proof of the vorticity estimate for the flat norm}
We now have all the ingredients to provide a proof for the vorticity estimate for the flat norm.


\begin{proof}
As we previously mentioned, the proof consists of a modification of the proof of \eqref{EstimateJ0} for $\gamma=1$ in \cite{Rom}*{Section 8}. We will present it in the language of vector calculus, so we will work with vector fields and functions (that in very few places are identified with differential forms). 

\medskip\noindent
{\bf Step 1:  Hodge decomposition and elliptic regularity estimates. } 
Let $\GG(b_\ep,\de)$ be the grid of cubes given by Lemma \ref{Lemma:Grid} with $\de=|\log\ep|^{-q}$, which implies that $\Omega_\ep\subset (\Omega\setminus \Theta)$.

We consider a smooth vector field $X\in C_0^\infty(\Omega_\ep,\RR^3)$ with $\max\{\|X\|_\infty,\|\curl X\|_\infty\}\leq 1$. We extend $X$ by $0$ outside $\Omega_\ep$. By \cite {AlaBroMon}*{Lemma 7.1} we can decompose it as 
\begin{equation}\label{hodge}
X=\curl B_X+ \nabla \phi_X\quad \mbox{in }B(0,R),
\end{equation}
where $B(0,R)$ is a ball such that $\Omega$ is compactly contained in it, $B_X$ is a divergence free vector field such that $B_X\times \vec \nu =0$ on $\partial B(0,R)$ and $\phi_X$ is a function defined up to a constant, so that we may assume that $\phi_X$ has zero average over $\Omega$, that is $\int_{\Omega} \phi_X=0$. Moreover,
\begin{equation}\label{estBA}
\|B_X\|_{W^{1,2}(B(0,R),\RR^3)}\leq C\|X\|_{L^2(B(0,R),\RR^3)}.
\end{equation}

On the other hand, we have 
$$
-\Delta B_X=\curlcurl B_X=\curl X\quad \mbox{in } B(0,R),
$$
and since $\Omega$ is compactly contained in $B(0,R)$, interior elliptic regularity yields that $B_X\in C^{1,\alpha}(\Omega,\RR^3)$ and 
$$
\|B_X\|_{C^{1,\alpha}(\Omega,\RR^3)}\leq C\( \|\curl X\|_{C^0(B(0,R),\RR^3)} +\|B_X\|_{L^2(B(0,R),\RR^3)}\)
$$
for any $\alpha\in (0,1)$. By combining with \eqref{estBA} and using that $X$ vanishes outside $\Omega_\ep$, we find
\begin{equation}\label{C1}
\|B_X\|_{C^{1,\alpha}(\Omega,\RR^3)}\leq C\( \|X\|_{C^0(\Omega_\ep,\RR^3)} +\|\curl X\|_{C^0(\Omega_\ep,\RR^3)}\).
\end{equation}

\medskip
On the other hand, since 
$$
\Delta \phi_X=\diver X\quad \mbox{in }B(0,R),
$$
by elliptic regularity we deduce that $\phi_X$ is smooth, since $X$ is smooth. Moreover, by combining \eqref{hodge} with \eqref{C1}, we have
$$
|\phi_X|_{\mathrm{Lip}(\Omega)}
=\|\nabla \phi_X\|_{C^0(\Omega,\RR^3)}\leq C\( \|X\|_{C^0(\Omega_\ep,\RR^3)} +\|\curl X\|_{C^0(\Omega_\ep,\RR^3)}\),
$$
where $|\phi_X|_{\mathrm{Lip}(\Omega)}$ denotes the Lipschitz seminorm of $\phi_X$ in $\Omega$. Besides, since $\int_{\Omega}\phi_X=0$, we have that
\begin{equation}
\label{C3}
\|\phi_X\|_{C^{0,1}(\Omega)}\leq C\|\phi_X\|_{\mathrm{Lip}(\Omega)}
\leq C\( \|X\|_{C^0(\Omega_\ep,\RR^3)} +\|\curl X\|_{C^0(\Omega_\ep,\RR^3)}\).
\end{equation}

\medskip\noindent
{\bf Step 2:  Vorticity estimate of each term of the Hodge decomposition. } 
Let us now write
\begin{multline}\label{mu1}
\int_{\Omega\setminus \Theta} (\mu(u,A)-\nu_\ep)\wedge X\\
=\int_{\Omega\setminus \Theta} (\mu(u,A)-\nu_\ep)\wedge \curl B_X+\int_{\Omega\setminus \Theta} (\mu(u,A)-\nu_\ep)\wedge \nabla \phi_X.
\end{multline}
First, by integrating by parts, we have 
$$
\int_{\Omega\setminus \Theta} \mu(u,A)\wedge \nabla \phi_X=\sum_{\omega\subset \partial \GG}\int_{\omega}\mu_{\ve,\omega}\phi_X.
$$

On the other hand, by construction of $\nu_\ep$ (recall in particular that $\nu_\ep$ is a sum in the sense of currents of simple Lipschitz curves that do not have endpoints in $\Omega\setminus \Theta$), we have
$$
\int_{\Omega\setminus \Theta} \nu_\ep\wedge \nabla \phi_X=\sum_{\omega\subset \partial \GG}\int_{\omega}2\pi\sum_{i\in I_\omega}d_{i,\omega}\delta_{a_{i,\omega}}\phi_X.
$$
Combining the previous two estimates, we have
$$
\int_{\Omega\setminus \Theta} (\mu(u,A)-\nu_\ep)\wedge \nabla \phi_X=\sum_{\omega\subset \partial \GG}\int_{\omega}\left(\mu_{\ve,\omega}-2\pi\sum_{i\in I_\omega}d_{i,\omega}\delta_{a_{i,\omega}}\right)\phi_X,
$$
and therefore, by combining with Lemma \ref{cor:2dVortEstimate}, we find
\begin{multline*}
\left|\int_{\Omega\setminus \Theta} (\mu(u,A)-\nu_\ep)\wedge \nabla \phi_X \right|\leq \\
C\sum_{\omega\subset \partial\GG}\max(r_\omega,\ve)\left(1+\int_\omega e_\ve(u_\ve,A_\ve)d\H^2+\int_{\partial \omega} e_\ve(u_\ve,A_\ve)d\H^1\right)\|\phi_X\|_{C^{0,1}(\partial\GG)}.
\end{multline*}

Therefore, by the choice of grid (see Lemma \ref{Lemma:Grid}) and \eqref{radiusGrid}, we have
\begin{equation}
\label{mu2}
\left|\int_{\Omega\setminus \Theta} (\mu(u,A)-\nu_\ep)\wedge \nabla \phi_X \right|\leq C \|\phi_X\|_{C^{0,1}(\partial\GG)} \ep \de^{-3}\(F_\ep(u,A)^2+1\).
\end{equation}

We now consider a cube $\CC_l\in \GG(b_\ep,\delta)$ and define $(\curl B_X)_l \colonequals \int_{\CC_l}\curl B_X$. Since $\curl B_X\in C^{0,\alpha}(\Omega)$ for any $\alpha\in (0,1)$, we have that
\begin{equation}\label{est1b}
\|\curl B_X -(\curl B_X)_l\|_{C^0(\CC_l,\RR^3)}\leq \de^\alpha \|\curl B_X\|_{C^{0,\alpha}(\CC_l,\RR^3)}.
\end{equation}
Moreover, 
\begin{multline}\label{b1}
\left| \int_{\CC_l} (\mu(u,A)-\nu_\ep)\wedge \curl B_X\right|\leq
\left|\int_{\CC_l} (\mu(u,A)-\nu_\ep)\wedge \(\curl B_X-(\curl B_X)_l\)\right|\\+\left|\int_{\CC_l} (\mu(u,A)-\nu_\ep)\wedge (\curl B_X)_l\right|.
\end{multline}
Using \eqref{est1b}, we deduce that
\begin{multline}
\label{b2}
\left|\int_{\CC_l} (\mu(u,A)-\nu_\ep)\wedge \(\curl B_X-(\curl B_X)_l\)\right|\\ \leq \de^\alpha\|\mu(u,A)-\nu_\ep\|_{C^0(\CC_l,\RR^3)^*}\|\curl B_X\|_{C^{0,\alpha}(\CC_l,\RR^3)}.
\end{multline}
On the other hand, since $(\curl B_X)_l$ is a constant, there exists a function $f_l$ such that 
$$
(\curl B_X)_l=\nabla f_l,\quad \int_{\CC_l}f_l=0.
$$ 
In particular
\begin{equation}\label{estcurlBl}
\|f_l\|_{C^{0,1}(\CC_l)}\leq |(\curl B_X)_l|.
\end{equation}
Arguing as before, by an integration by parts and construction of $\nu_\ep$, we have
\begin{multline*}
\int_{\CC_l} (\mu(u,A)-\nu_\ep)\wedge (\curl B_X)_l=\int_{\CC_l} (\mu(u,A)-\nu_\ep)\wedge \nabla f_l\\
=\sum_{\omega\subset \partial \CC_l}\int_{\omega}\left(\mu_{\ve,\omega}-2\pi\sum_{i\in I_\omega}d_{i,\omega}\delta_{a_{i,\omega}}\right)f_l,
\end{multline*}
and therefore, by combining with Lemma \ref{cor:2dVortEstimate}, we find
\begin{multline*}
\left|\int_{\CC_l} (\mu(u,A)-\nu_\ep)\wedge \nabla f_l  \right|\leq \\
C\sum_{\omega\subset \partial\CC_l}\max(r_\omega,\ve)\left(1+\int_\omega e_\ve(u_\ve,A_\ve)d\H^2+\int_{\partial \omega} e_\ve(u_\ve,A_\ve)d\H^1\right)\|f_l\|_{C^{0,1}(\partial\CC_l)},
\end{multline*}
which combined with \eqref{estcurlBl}, yields
\begin{multline*}
\left|\int_{\CC_l} (\mu(u,A)-\nu_\ep)\wedge (\curl B_X)_l  \right|\leq \\
C\sum_{\omega\subset \partial\CC_l}\max(r_\omega,\ve)\left(1+\int_\omega e_\ve(u_\ve,A_\ve)d\H^2+\int_{\partial \omega} e_\ve(u_\ve,A_\ve)d\H^1\right)|(\curl B_X)_l|.
\end{multline*}
Plugging in this and \eqref{b2} into \eqref{b1}, yields 
\begin{multline*}
\left|\int_{\CC_l} (\mu(u,A)-\nu_\ep)\wedge \curl B_X \right|\leq \\
C\|\curl B_X\|_{C^{0,\alpha}(\CC_l,\RR^3)}\Bigg(\de^\alpha\|\mu(u,A)-\nu_\ep\|_{C^0(\CC_l,\RR^3)^*}\\
+\sum_{\omega\subset \partial\CC_l}\max(r_\omega,\ve)\left(1+\int_\omega e_\ve(u_\ve,A_\ve)d\H^2+\int_{\partial \omega} e_\ve(u_\ve,A_\ve)d\H^1\right)\Bigg) .
\end{multline*}
Thus, by summing over cubes and using Lemma \ref{Lemma:Grid} and \eqref{radiusGrid}, we obtain 
\begin{multline}
\label{my}
\left|\int_{\Omega\setminus\Theta} (\mu(u,A)-\nu_\ep)\wedge \curl B_X \right|\\
\leq  C\|\curl B_X\|_{C^{0,\alpha}(\Omega\setminus\Theta,\RR^3)} \left(
\de^\alpha\|\mu(u,A)-\nu_\ep\|_{C^0(\Omega\setminus\Theta,\RR^3)^*}+ \ep \de^{-3}\(F_\ep(u,A)^2+1\)\right).
\end{multline}

On the other hand, by \cite{Rom}*{Lemma 8.1}, we have
$$
\|\mu(u,A)\|_{C^0(\Omega,\RR^3)^*}\leq CF_\ep(u,A).
$$
Moreover, from the lower bound in Theorem \ref{theorem:epslevel}, we have
$$
\|\nu_\ep\|_{C^0(\Omega,\RR^3)^*}\leq CF_\ep(u,A)\lep^{-1}.
$$
Hence
$$
\|\mu(u,A)-\nu_\ep\|_{C^0(\Omega,\RR^3)^*}\leq CF_\ep(u,A).
$$
By inserting in \eqref{my}, we find
\begin{multline}
\label{mu3}
\left|\int_{\Omega\setminus\Theta} (\mu(u,A)-\nu_\ep)\wedge \curl B_X \right|\\
\leq  C\|\curl B_X\|_{C^{0,\alpha}(\Omega\setminus\Theta,\RR^3)} \left(
\de^\alpha F_\ep(u,A)+ \ep \de^{-3}\(F_\ep(u,A)^2+1\)\right).
\end{multline}

Hence, by combining \eqref{mu1},\eqref{mu2}, and \eqref{mu3}, we are led to
\begin{multline*}
\left|\int_{\Omega\setminus\Theta} (\mu(u,A)-\nu_\ep)\wedge X \right|\leq \\
C\max\left\{\ep\de^{-3}(F_\ep(u,A)^2+1),\de^\alpha F_\ep(u,A)\right\}\(\|\curl B_X\|_{C^{0,\alpha}(\Omega\setminus \Theta,\RR^3)}+\|\phi_X\|_{C^{0,1}(\partial\GG)} \).
\end{multline*}
Finally, by inserting \eqref{C1} and \eqref{C3} in this inequality, we obtain 
\begin{multline*}
\left|\int_{\Omega\setminus\Theta} (\mu(u,A)-\nu_\ep)\wedge X \right|\leq \\
C\max\left\{\ep\de^{-3}(F_\ep(u,A)^2+1),\de^\alpha F_\ep(u,A)\right\}\( \|X\|_{C^0(\Omega_\ep,\RR^3)} +\|\curl X\|_{C^0(\Omega_\ep,\RR^3)}\),
\end{multline*}
and since 
$$
\max\{\|X\|_\infty,\|\curl X\|_\infty\}\leq 1,
$$
we get
$$
\left|\int_{\Omega\setminus\Theta} (\mu(u,A)-\nu_\ep)\wedge X \right|\leq \\
C\max\left\{\ep\de^{-3}(F_\ep(u,A)^2+1),\de^\alpha F_\ep(u,A)\right\}.
$$
Recalling that $\de=|\log\ep|^{-q}$, for any $\ep$ sufficiently small, we have that 
$$
\max\left\{\ep\de^{-3}(F_\ep(u,A)^2+1),\de^\alpha F_\ep(u,A)\right\}\leq \de^\alpha \(F_\ep(u,A)+1\),
$$ 
and therefore, recalling that $\Omega_\ep\subset (\Omega\setminus\Theta)$ and $X$ is supported in $\Omega_\ep$, we conclude that
$$
\left|\int_{\Omega_\ep} (\mu(u,A)-\nu_\ep)\wedge X \right|\leq \\
C\frac{(F_\ep(u,A)+1)}{|\log\ep|^{\alpha q}} 
$$
for any $\alpha \in (0,1)$. This concludes the proof. 
\end{proof}

\bibliography{referencesRSS}
\end{document}